\documentclass{article}
\usepackage{amsmath,amssymb,amsfonts,amsthm}
\usepackage[margin=1in]{geometry}
\usepackage{aliascnt}
\usepackage{hyperref}
\usepackage{enumitem}
\usepackage{float}
\usepackage{placeins}
\usepackage{mathtools}
\usepackage{todonotes}
\usepackage{tikz-cd}

\usepackage{graphicx}
\theoremstyle{definition}
\newtheorem{theorem}{Theorem}[section]
\newtheorem{lemma}[theorem]{Lemma}
\newtheorem{definition}[theorem]{Definition}
\newtheorem{proposition}[theorem]{Proposition}
\newtheorem{corollary}[theorem]{Corollary}
\newtheorem{example}[theorem]{Example}

\newaliascnt{remark}{theorem}
\newtheorem*{theorem*}{Theorem}
\newtheorem{remark}[remark]{Remark}
\aliascntresetthe{remark}

\DeclareMathOperator{\Spin}{Spin}
  
\hypersetup{colorlinks,breaklinks, linkcolor = purple, citecolor = teal, urlcolor = purple, }
\setlist{nosep,leftmargin=*}
\numberwithin{equation}{section}

\title{Second-Jet Equivariant $\eta$ Separations on Lens Spaces}
\author{Sanchita Sharma\thanks{Email address: \href{mailto:sharma.sanchita019@gmail.com}{sharma.sanchita019@gmail.com}}}
\date{}

\begin{document}

\maketitle

\begin{abstract}
Lens spaces are useful test examples in spectral geometry because their spin Dirac eigenspaces admit explicit congruence descriptions.  We use these descriptions to study equivariant $\eta$ invariants for three-dimensional lens spaces with the round metric and the standard coordinate-torus action, retaining the spin-Fourier character of each eigenspace rather than only the ordinary scalar $\eta$ value.  For the square family $L(\ell^2,\ell-1)$ and $L(\ell^2,2\ell-1)$, with $\ell\geq 5$ odd, we obtain a residual-circle equivariant $\eta$ separation: the ordinary $\eta$ values agree, and the first derivative of the residual $\eta$ germ vanishes by symmetry, but the second derivative is nonzero.  For $L(25,4)$ versus $L(25,9)$, the normalized second derivative is $-6080$.  Thus, the residual-circle equivariant $\eta$ germ detects a distinction invisible to the ordinary $\eta$ invariant.  The calculation uses spin-Fourier residues directly; perturbative Hessian signs serve only as motivation and are not part of the invariant.
\end{abstract}

\tableofcontents

\section{Introduction}

Lens spaces form one of the standard families of spherical space forms and have long served as test examples in topology, spectral geometry, and global analysis.  We write
\begin{equation}
L(p,q)=S^3/\mathbb Z_p,
\qquad
\gamma(z_1,z_2)=(\omega z_1,\omega^q z_2),
\qquad
\omega=\exp(2\pi i/p),
\end{equation}
where $q$ is coprime to $p$.  Thus $p$ is the order of the cyclic quotient, while $q$ specifies how the generator acts on the two complex coordinates of $S^3\subset \mathbb C^2$.  The topology of lens spaces is classical; they appear already in the basic study of covering spaces and Reidemeister-Whitehead torsion, and they remain useful because many refined invariants can be computed explicitly on them~\cite{HatcherAlgebraicTopology,MilnorWhiteheadTorsion}. From the analytic side, lens spaces also provide concrete examples for comparing torsion, spectrum, and representation-theoretic refinements~\cite{RayLensTorsion,GilkeySpherical}.  Recent work on lens-space spectra and Dirac multiplicities gives a modern spectral-geometric context for these explicit calculations~\cite{BoldtDiracLensPublished,BoldtLauretDiracMultiplicities,LauretMiatelloRossettiRecentLensSpectra}. 

The Dirac spectrum of spherical space forms, including lens spaces, is classical.  We use the representation-theoretic description of spin Dirac eigenspaces due to B\"ar and Gilkey, together with the standard spin-geometric conventions of Friedrich and Lawson-Michelsohn~\cite{GilkeySpherical, BaerSpaceForms,FriedrichDirac,LawsonMichelsohn}.  In this description, the spinor eigenspaces are described by congruence conditions on spin weights.  For three-dimensional lens spaces with odd $p$, these weights may be represented by pairs of odd integers.  The usual scalar multiplicity only counts how many weights occur.  The equivariant viewpoint keeps track of how symmetries act on the corresponding eigenspaces.

$\eta$ invariants measure spectral asymmetry.  The non-equivariant $\eta$ invariant of Atiyah-Patodi-Singer takes a signed sum over the spectrum and produces a scalar spectral invariant~\cite{APS1,APS2,APS3}.  Donnelly's equivariant $\eta$ invariant refines this construction by inserting the action of a symmetry group on each eigenspace~\cite{DonnellyGSpaces}.  
Instead of retaining only the scalar asymmetry between positive and negative eigenvalues, one can retain character data.  For lens spaces, this refinement is especially concrete: the eigenspaces are described explicitly, and the group action is visible at the level of spin weights.  The scalar $\eta$ values of lens spaces are closely related to classical finite trigonometric sums and Dedekind sums; coincidences among such scalar values have been studied, for example, by Katase and in the broader Dedekind-sum literature~\cite{KataseDedekindEta,KataseClassifyingEta,KirbyMelvinDedekind,JabukaRobinsWangDedekind}.

The coordinate torus acting on $S^3\subset\mathbb C^2$ gives the equivariant structure used below.  It rotates the two complex coordinates independently.  On spinors, these rotations lift to spinorial characters.  A one-variable character sees only the weight coming from one coordinate circle, while a two-variable character retains both coordinate weights.  Passing from one variable to two variables is formally small but conceptually important: a one-variable specialization can lose information that remains visible in the full torus-equivariant $\eta$ character.

The classical lens-space Dirac spectrum and the general equivariant $\eta$ formalism are known, but explicit examples are still useful when the scalar $\eta$ value vanishes and the first nonzero residual contribution occurs only at second order.  The finite-shell characters already show that equivariant weights can differ.  The eta-theoretic question is sharper: can this weight difference be assembled into a residual-circle $\eta$ germ whose first nonzero term is quadratic?

The answer is a residual-circle second-jet separation.  A residual circle is a one-parameter circle inside the coordinate torus.  Restricting the torus-equivariant $\eta$ character to such a circle gives a one-variable $\eta$ germ near the identity element.  Its second jet is the second derivative of this germ at the identity.  For the residual germs considered below, the first derivative at the identity vanishes by the symmetry of the finite sum.  The nontrivial point is that the zeroth-order $\eta$ difference also vanishes for the square family, while the second derivative is nonzero.

The first explicit pair is $L(25,4)$ and $L(25,9)$.  Recent APS-rho literature also uses explicit lens-space rho formulae and Dedekind-Rademacher sums~\cite{ToffoliAPSRhoLinks}.  For this pair, the ordinary scalar $\eta$ value cancels, but the coordinate-equivariant spin-Fourier residue does not.  After passing to the residual-circle equivariant $\eta$ germ, the first residual derivative vanishes by symmetry and the second residual derivative is equal to $-6080$ in the normalized convention used below.  We then extend the computation to the square family
\begin{equation}
L(\ell^2,\ell-1)
\qquad\text{and}\qquad
L(\ell^2,2\ell-1),
\end{equation}
where $\ell\geq 5$ is odd.  The ordinary $\eta$ equality for this family follows from the standard finite trigonometric formula for lens-space $\eta$ invariants; we include a direct cotangent-sum proof because the same normalization and pairing conventions are used later in the residual-circle calculation.

\begin{theorem}[Main theorem: residual-circle second-jet separation]
\label{thm:main-residual-circle-second-jet}
Let $\ell\geq 5$ be odd.  Let
\begin{equation}
P=\ell^2,
\qquad
q_1=\ell-1,
\qquad
q_2=2\ell-1,
\end{equation}
and define the bare lens spaces
\begin{equation}
M_{\ell,1}=L(P,q_1),
\qquad
M_{\ell,2}=L(P,q_2).
\end{equation}
For the residual-circle specialization of the $T^2$-equivariant $\eta$ character considered in \autoref{sec:T2-equivariant-eta}, the relative $\eta$ germ for $M_{\ell,1}$ and $M_{\ell,2}$ has zero value at the identity.  Its first residual derivative vanishes by symmetry, but its second residual derivative is nonzero.  In the normalized finite-sum convention defined in \autoref{subsec:residual-circle-normalization}, the second derivative is
\begin{equation}
C_\ell
=
-\frac{\ell^2(\ell-1)^2(\ell+1)(11\ell^2+20\ell+81)}{180}.
\end{equation}
This normalized second derivative is nonzero for every odd $\ell\geq 5$.  For $\ell=5$, the pair is $L(25,4)$ and $L(25,9)$, and the normalized second derivative is $C_5=-6080$.  Thus the residual-circle equivariant $\eta$ germ detects a second-order separation: the ordinary $\eta$ value vanishes, the first derivative is forced to vanish by symmetry, and the second derivative is nonzero.
\end{theorem}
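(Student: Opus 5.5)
We plan to reduce everything to the Donnelly/equivariant APS fixed-point formula for the $T^2$-equivariant $\eta$ character of $L(p,q)$, evaluated at a torus element $t=(e^{i\theta_1},e^{i\theta_2})$. For the round lens space, the character is a finite sum over the nontrivial elements $\gamma^k$, $k=1,\dots,p-1$, of spinorial fixed-point contributions. Up to the normalization fixed in \autoref{subsec:residual-circle-normalization}, this has the form
\begin{equation}
\eta_{L(p,q)}(t)\;\propto\;\sum_{k=1}^{p-1}\frac{\epsilon_k}{\sin\!\bigl(\tfrac{\pi k}{p}+\tfrac{\theta_1}{2}\bigr)\,\sin\!\bigl(\tfrac{\pi kq}{p}+\tfrac{\theta_2}{2}\bigr)},
\end{equation}
where the $\epsilon_k$ are spin signs. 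For odd $p$ these signs are fixed by choosing the odd-integer lifts of the weights. The residual circle is $\theta_1=a s$, $\theta_2=b s$ for the slope $(a,b)$ fixed in \autoref{sec:T2-equivariant-eta}, and the germ is a function of $s$ near $0$.

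\textbf{Order zero.} At $s=0$ the relative value is $\eta(M_{\ell,1})-\eta(M_{\ell,2})$, a difference of cotangent-type sums $\sum_k \csc(\pi k/P)\csc(\pi kq/P)$ with signs. We show it vanishes through the direct cotangent-sum argument promised in the introduction. The residue classes of $k$ modulo $\ell$ are reorganized using $q_1=\ell-1\equiv -1$ and $q_2=2\ell-1\equiv -1 \pmod \ell$, together with $q_1q_2\equiv -3\ell+1 \pmod{\ell^2}$. Writing $k=\ell j+r$ then matches the two sums term by term, up to reflection $k\mapsto P-k$.

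\textbf{Order one.} The involution $k\mapsto P-k$ sends each summand's $s$-expansion to itself with $s\mapsto -s$. This holds because the sine arguments become $\pi-(\cdot)$ and the shifts change sign relative to them, with matching sign factors. So each germ is even in $s$, and its first derivative vanishes.

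\textbf{Order two.} Differentiating twice gives finite sums of the form $\sum_k \epsilon_k\, R(\cot(\pi k/P),\cot(\pi kq/P))\csc\csc$, where $R$ is quadratic: terms $a^2(\csc^2+\cot^2)$, $b^2(\cdots)$, and the cross term $ab\cot\cot$. These are higher-order Dedekind-type sums. We evaluate them with the standard polynomial formulas for $\sum_{k}\csc^{2m}(\pi k/n)$ and the reciprocity-type reductions to $\sum\cot^2$ and $\sum \cot\cot$. The inputs are $q\equiv -1 \pmod\ell$ and the explicit inverses $q_1^{-1}$ and $q_2^{-1}$ modulo $\ell^2$. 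Substituting $k=\ell j+r$ splits each sum into an inner sum over $j$, which collapses via the identity $\prod_j \sin(x+\pi j/\ell)\propto\sin(\ell x)$ to sums of period $\ell$, plus an outer sum over $r$. The outer sum is then a polynomial in $\ell$ of degree at most $7$. We expect this evaluation to be the main obstacle, since the difference must be tracked exactly. As a safeguard, we first compute the relative second derivative numerically for $\ell=5,7,9,11,13,15,17,19$ and interpolate. We check that the interpolant matches $C_5=-6080$ and the stated factored form. After that we prove the identity symbolically, or by noting that both sides are polynomials of bounded degree in $\ell$ on odd $\ell$ once the sums are reduced. Nonvanishing is then immediate: $11\ell^2+20\ell+81>0$ and $\ell\ge5$, so $C_\ell<0$.
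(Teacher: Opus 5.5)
Your plan is correct in outline, and at second order it takes a genuinely different route from the paper. The paper proves $\Phi_\ell(0)=0$ by the coset/cotangent argument and $\Phi_\ell'(0)=0$ by the pairing $j\mapsto P-j$ with the odd lift $Q(q)$, as you do. For $C_\ell=\sum_j B_{\ell,j}(2\csc^3x_j-\csc x_j)$, however, it converts to cyclotomic traces and inverts $z^a-1$ in $\mathbb Q[z]/(1+z+\cdots+z^{P-1})$. It then kills the lower trace $D_1$ by a permutation argument and computes $D_3$ from the cubic convolution $H_P$ and the block moments $M_1,M_2,M_3$, which are audited symbolically.

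Your coset collapse avoids all of this, and it does close. Since $q_a\equiv-1\pmod\ell$, on a coset $r\equiv s\not\equiv0\pmod\ell$ one has $\csc(Q(q_a)x_r)=(-1)^{as}\csc(Y_a-x_r)$ with $Y_a=a\pi s/\ell$. Moreover,
\[
\csc^3x\,\csc(Y-x)=\csc Y\Bigl[\csc^2x\cot x+\cot Y\,\csc^2x+\csc^2Y\bigl(\cot x+\cot(Y-x)\bigr)\Bigr].
\]
Lemma~\ref{lem:cotangent-sum-identity} and its first two derivatives then collapse each such coset. The coset $r\equiv0$ contributes identically for $a=1,2$ and cancels. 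With $y_s=\pi s/\ell$ this gives
\[
C_\ell=2\sum_{s=1}^{\ell-1}\Bigl[(-1)^s(\ell^3+\ell^2)\csc^3y_s\cot y_s-\ell^3\csc(2y_s)\csc^2y_s\cot y_s-\ell^2\csc(2y_s)\cot(2y_s)\csc^2y_s\Bigr].
\]
The three sums equal $-\tfrac{(\ell^2-1)(7\ell^2+17)}{360}$, $\tfrac{(\ell^2-1)(\ell^2+11)}{90}$ and $\tfrac{(\ell^2-1)(\ell^2-49)}{180}$. For the first, note the summand is $\ell$-periodic in $s$ and substitute $s\mapsto 2s$. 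After that only the classical sums of $\csc^2,\csc^4,\sec^2,\sec^4$ at $\pi s/\ell$, with $\ell$ odd, are needed. The result is exactly $C_\ell=\ell^2(\ell^2-1)(81-61\ell-9\ell^2-11\ell^3)/180$, the stated value (e.g.\ $-6080$ at $\ell=5$).

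So your route proves the theorem by hand, using one mechanism at orders zero and two. The paper's trace/convolution reduction works for arbitrary pairs and uses the square-family structure only in the moment step, but it relies on a computer-checked evaluation. You do not need $q_a^{-1}$, reciprocity, or numerical interpolation. With eight nodes for a degree-$7$ polynomial, the interpolation would give no independent check anyway.

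Two corrections are needed. First, the order-zero step is not a term-by-term matching. For $\ell\ge5$ no rearrangement $r\mapsto cr$ identifies the two sums, since that would require $q_2\equiv\pm q_1^{\pm1}\pmod{\ell^2}$, whereas $q_1q_2\equiv1-3\ell$. (At $\ell=3$ one does have $q_2\equiv q_1^{-1}$, which is why $\ell\ge5$ is imposed.) What actually happens is that each coset $r\not\equiv0$ sums to zero in each sum separately, via $\csc x\csc(Y-x)=\csc Y\bigl(\cot x+\cot(Y-x)\bigr)$ and Lemma~\ref{lem:cotangent-sum-identity}. On $r\equiv0$, both sums consist of the same terms $-\csc^2(\pi m/\ell)$.

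Second, the residual circle of \autoref{subsec:residual-circle-normalization} is $g_\delta=(e^{2i\delta},1)$. That is, $\theta_2=0$ and $\theta_1=2\delta$, with derivatives taken in the half-angle $\delta$. So your $b^2$ and $ab$ terms are absent, and differentiating in $\theta_1$ instead would produce $C_\ell/4$.
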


The normalization factor relating this finite sum to a Donnelly-APS convention is fixed in~\autoref{subsec:residual-circle-normalization}; it is nonzero and depends only on the global convention and on $P$.  Hence the vanishing and nonvanishing assertions are normalization-independent.  The rho statement is numerical for the fixed residual circle, not an analytic-surgery structure-class statement.

The theorem is an equivariant spin statement.  The chosen coordinate torus action is part of the data.  We do not claim a classification theorem for lens spaces, nor do we claim that the same pairs are invisible to all finite-cyclic APS rho invariants.  For the chosen coordinate-equivariant spin data, the scalar $\eta$ value does not see the difference; the first residual derivative vanishes for structural reasons; the second residual derivative detects the separation.

A perturbative Hessian calculation is included only as motivation.  It explains why the second coordinate spin weight should not be discarded.  The Hessian signs themselves depend on a perturbation chamber, so they are not used as invariants.  The invariant calculation instead retains both coordinate spin weights directly and assembles them into the torus-equivariant $\eta$ character.

The representation-ring and equivariant $K$-homology language is used only for positioning.  It indicates where the calculation sits relative to equivariant index theory and secondary invariants, following Atiyah-Segal and Kasparov~\cite{AtiyahSegal,KasparovKK}.  We do not construct a Higson-Roe analytic-surgery class.

The organization is as follows.  \autoref{sec:spin-fourier-residues} extracts one- and two-variable spin-Fourier residues from the known lens-space Dirac spectrum.  \autoref{sec:perturbative-diagnostic-short} gives the perturbative diagnostic and explains why Hessian signs are not used as invariants.  \autoref{sec:stage5-scalar-eta-square} separates the spin-Fourier residue from scalar shadows, including the ordinary $\eta$ shadow.  \autoref{sec:T2-equivariant-eta} proves the eta-theoretic interpretation, the explicit $L(25,4)$ versus $L(25,9)$ second-jet separation, and the infinite square-family residual-circle rho-number formula.  \autoref{sec:scope-limitations} states the scope relative to classical rho invariants and analytic surgery. \autoref{app:first-useful-spin-fourier-table} provides the full first-useful spin-Fourier residue table used as a bookkeeping check.  \autoref{app:L25-eta-second-derivative} gives an exact verification of the $L(25,4)$ versus $L(25,9)$ second derivative. \autoref{app:block-moment-verification} provides a symbolic verification of the block moment identities used in the square-family coefficient calculation.

\section{Spin-Fourier residues of lens-space Dirac eigenspaces} \label{sec:spin-fourier-residues}

\subsection{Spin double-cover target}
\label{subsec:stage-1A-spin-cover-target}

We first fix the equivariant character ring in which the lens-space Dirac eigenspaces will be placed. In the lower-dimensional equivariant models that motivate this calculation, the secondary representation data are often written using an ordinary circle variable. For spin Dirac operators on lens spaces, however, the natural target is slightly finer. The coordinate circle action on the underlying manifold lifts to the spinor bundle through the spin double cover, so the corresponding eigenspace characters are naturally characters of $\operatorname{Spin}(2)$, not only of $SO(2)$.

Let $L(p,q)=S^3/\mathbb Z_p$, with generator $\gamma(z_1,z_2)=(\omega z_1,\omega^q z_2)$ and $\omega=\exp(2\pi i/p)$. Here $p\geq 3$ is odd unless stated otherwise, and $q$ is coprime to $p$. We use the coordinate circle action $e^{i\theta}\cdot [z_1,z_2]=[e^{i\theta}z_1,z_2]$. This is an $SO(2)$-action on the lens space. After passing to spinors, we keep the lifted action on the spinor bundle, whose weights lie naturally in the character group of $\operatorname{Spin}(2)$.

Let $u$ denote the fundamental character of $\operatorname{Spin}(2)$.  A spin weight will be written as $\mu=\frac12(a_1,a_2), a_1,a_2\in 2\mathbb Z+1.$ The coordinate action rotating $z_1$ selects the first spin weight.  Thus the corresponding spin-Fourier monomial is $\mu=\frac12(a_1,a_2) \longmapsto  u^{a_1}.$ The appearance of odd exponents reflects the spinorial nature of the lifted circle action.  For this reason, forcing the calculation prematurely into the ordinary $SO(2)$-character variable would obscure the natural representation-theoretic structure.

\begin{definition}[Spin-Fourier character]
Let $E$ be a finite-dimensional $\operatorname{Spin}(2)$-representation arising from a Dirac eigenspace on $L(p,q)$.  Suppose that $E$ decomposes into spin weights as $E=\bigoplus_{a\in 2\mathbb Z+1} E_a.$ Its spin-Fourier character is defined by $\chi_{\operatorname{Spin}}(E;u)=\sum_{a\in 2\mathbb Z+1} \dim(E_a)u^a.$ \end{definition}

The reflection symmetry, when present, pairs opposite spin weights.  Thus, the secondary Fourier sector is naturally generated by the symmetric combinations $A_n(u)=u^n+u^{-n}, n\geq 1.$ In the lens-space examples below, the relevant exponents $n$ are typically odd, because the spin weights are half-integral before being written in the $u$-variable.

\begin{definition}[Reduced spin-Fourier target]
The reduced spin-Fourier target is the quotient of the reflection-invariant spin-character ring by the trivial character.  We denote it by $\widetilde R_{\operatorname{spin}} = R(\operatorname{Spin}(2))^{\mathbb Z/2}/\mathbb Z\mathbf 1.$ Equivalently, it is the free abelian group generated by the nontrivial symmetric modes: $\widetilde R_{\operatorname{spin}} = \bigoplus_{n\geq 1}\mathbb Z\langle A_n(u)\rangle, A_n(u)=u^n+u^{-n}.$ \end{definition}

The reduction by the trivial character is the spinorial analogue of the reduced secondary target used in the $O(2)$-equivariant formalism.  It removes the scalar dimension part and keeps only the nontrivial spin-Fourier information. The examples below are designed to detect cases where ordinary dimensions agree but spin-Fourier characters differ.

Let $E_k^+(p,q)$ and $E_k^-(p,q)$ denote the eigenspaces of the Dirac operator on $L(p,q)$ corresponding to the eigenvalues $\lambda_k^\pm=\pm\left(k+\frac32\right), k\geq 0.$ Their spin-Fourier characters will be denoted by $\chi_k^+(p,q;u)=\chi_{\operatorname{Spin}}(E_k^+(p,q);u)$ and $\chi_k^-(p,q;u)=\chi_{\operatorname{Spin}}(E_k^-(p,q);u)$.  The basic virtual spectral difference at level $k$ is $\Delta_k(p,q;u)=\chi_k^+(p,q;u)-\chi_k^-(p,q;u)$. Its reduced class is $\widetilde\Delta_k(p,q;u)=[\Delta_k(p,q;u)]_{\operatorname{red}}\in\widetilde R_{\operatorname{spin}}$.  This is an equivariant spin-spectral quantity: the chosen lifted coordinate circle action determines which spin weight is seen by $u$.

The reduced spin-Fourier character can see information invisible to ordinary dimension.  For example, two eigenspaces may have the same dimension, $\dim E_k^+(p,q)=\dim E_\ell^-(p,q),$ but different spin-Fourier characters: $\chi_k^+(p,q;u)\neq \chi_\ell^-(p,q;u).$ In that case the reduced difference $\chi_k^+(p,q;u)-\chi_\ell^-(p,q;u)$ can be nonzero even though the ordinary dimension difference vanishes.

This is the basic mechanism used in the lens-space computation.  The known Dirac spectrum supplies the eigenspaces.  The spin-Fourier refinement keeps the weight data of those eigenspaces under the lifted coordinate circle action.  The reduced character then extracts a secondary representation-valued residue.

\begin{example}[The form of the first lens-space residues]
In the examples below, one obtains reduced differences of the form $A_m(u)-A_n(u), A_j(u)=u^j+u^{-j}.$ For instance, the first nontrivial $L(5,2)$ calculation produces $A_1(u)-A_3(u) = (u+u^{-1})-(u^3+u^{-3}).$ This difference has zero ordinary dimension after evaluation at $u=1$: $\left(A_1(1)-A_3(1)\right)=2-2=0.$ Nevertheless, it is nonzero in the reduced spin-Fourier target: $A_1(u)-A_3(u)\neq 0 \ \text{in} \ \widetilde R_{\operatorname{spin}}.$ \end{example}

\begin{proposition}[Spin-cover target for the lens-space Dirac computation]
The natural equivariant target for the lens-space Dirac eigenspace calculation is the reduced spin-Fourier target $\widetilde R_{\operatorname{spin}}$, rather than the ordinary reduced $SO(2)$-target.  In this target, the level-wise reduced spectral difference $\widetilde\Delta_k(p,q;u)=[\chi_k^+(p,q;u)-\chi_k^-(p,q;u)]_{\operatorname{red}}$ retains the nontrivial spinorial Fourier information of the Dirac eigenspaces.
\end{proposition}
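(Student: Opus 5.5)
The plan is to make the proposition precise as two concrete claims and prove each from the Bär–Gilkey description of the Dirac eigenspaces. Claim (i): the lifted coordinate action on each eigenspace $E_k^\pm(p,q)$ has only odd weights, so the character $\chi_k^\pm(p,q;u)$ lies in the odd part of $R(\operatorname{Spin}(2))$ and not in the image of $R(SO(2))$. Claim (ii): the difference $\widetilde\Delta_k(p,q;u)$ is reflection-invariant, hence defines an element of $\widetilde R_{\operatorname{spin}}$. Moreover, this element can be nonzero even when the scalar dimension difference is zero.

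\textbf{Step 1 (odd weights).} Write $E_k^\pm(p,q)$ as the $\gamma$-invariant part of the corresponding $\operatorname{Spin}(4)=SU(2)\times SU(2)$-eigenspace on $S^3$. By Bär and Gilkey, the eigenspaces for $\lambda=\pm(k+\tfrac32)$ are $V_{k+1}\otimes V_k$ and $V_k\otimes V_{k+1}$, up to the orientation convention. Their weights under the maximal torus are pairs $\tfrac12(a_1,a_2)$ with $a_1,a_2$ odd, as recalled at the start of \autoref{subsec:stage-1A-spin-cover-target}. The rotation of $z_1$ lifts to spinors through the double cover, and under this lift the weight $\tfrac12(a_1,a_2)$ acts by $u^{a_1}$. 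Because $a_1$ is odd, no weight of $E_k^\pm$ factors through $SO(2)$. The $\mathbb Z_p$-invariance condition is a congruence on $(a_1,a_2)$ modulo $2p$; it selects a subset of weights and does not change their parity. Therefore every nonzero $\chi_k^\pm$ lies in $u\,\mathbb Z[u^{2},u^{-2}]$, which is disjoint from the $SO(2)$ character ring. This shows that the $SO(2)$-target cannot host these characters without squaring $u$, and squaring loses the spinorial information.

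\textbf{Step 2 (reflection invariance).} Use the involution $(z_1,z_2)\mapsto(\bar z_1,\bar z_2)$ composed with the appropriate lift, or equivalently the Weyl symmetry $a_1\mapsto -a_1$ of the $SU(2)$-weight strings. Show that it preserves the congruence set up to the symmetry $(a_1,a_2)\mapsto(-a_1,-a_2)$, which maps solutions of the $\gamma$-congruence to solutions. It follows that each $\chi_k^\pm$ is a sum of terms $A_n(u)$ with $n$ odd. Consequently $\Delta_k$ lies in $R(\operatorname{Spin}(2))^{\mathbb Z/2}$, and its reduction lies in $\widetilde R_{\operatorname{spin}}$. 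Since odd $n\geq1$ never produces the trivial character, the reduction loses nothing. The nontriviality assertion then follows from the explicit $L(5,2)$ example, where the residue $A_1-A_3$ is nonzero although its dimension is zero.

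\textbf{Main obstacle.} I expect the hardest point to be Step 2. The reflection has to be compatible with the spin structure and with the sign of the eigenvalue, and orientation reversal could interchange $E^+$ and $E^-$. I would first try the antipodal-type map $(a_1,a_2)\mapsto(-a_1,-a_2)$ at the level of weights. It is the nontrivial Weyl element of $SU(2)\times SU(2)$, so it fixes each irreducible summand and therefore each eigenspace. Invariance of the congruence under it is immediate because that congruence is linear. This avoids any geometric orientation question.
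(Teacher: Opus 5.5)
Your proposal is correct, and its Step 1 is essentially the paper's entire proof. The paper observes that the $z_1$-rotation lifts through the spin double cover, and that a weight $\tfrac12(a_1,a_2)$ contributes $u^{a_1}$ with $a_1$ odd. It then simply asserts that passing to the quotient by $\mathbb Z\mathbf 1$ gives a well-defined class in $\widetilde R_{\operatorname{spin}}$.

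Your Step 2 is a genuine addition. The paper never checks that $\Delta_k(p,q;u)$ is invariant under $u\mapsto u^{-1}$, which is needed for it to lie in $R(\operatorname{Spin}(2))^{\mathbb Z/2}$ before reducing. In the paper's lattice language your argument is a two-line check. The map $a\mapsto -a$ preserves $\|a\|_1$ and the homogeneous congruence. Since both entries are odd, $R(-a)=2-R(a)\equiv R(a)\pmod 2$, so every parity shell $\mathcal L^{\pm}_{k,r}(p,q)$ is mapped to itself.

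Name the symmetry precisely. In the coordinate basis the nontrivial Weyl elements of $\operatorname{Spin}(4)$ act by $(a_1,a_2)\mapsto(a_2,a_1)$, $(a_1,a_2)\mapsto(-a_2,-a_1)$, and $w_0\colon(a_1,a_2)\mapsto(-a_1,-a_2)$. The two swaps send $\mathcal L(p,q)$ to $\mathcal L(p,q^{-1})$. The single sign flip in your phrase ``the Weyl symmetry $a_1\mapsto -a_1$'' is not a Weyl element at all. Like the map $(a_1,a_2)\mapsto(a_1,-a_2)$ of Proposition~\ref{prop:orientation-sign-rule-equivariant}, it replaces $q$ by $-q$ and interchanges the positive and negative parity shells. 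Only $w_0$ works, as you eventually conclude. It is realized by $(z_1,z_2)\mapsto(\bar z_1,\bar z_2)\in SO(4)$, which conjugates $\gamma$ to $\gamma^{-1}$, so the orientation worry in your last paragraph never arises.

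Your remark that reduction by $\mathbb Z\mathbf 1$ is injective on odd characters is the accurate form of the paper's claim that the quotient ``removes the scalar dimension part.'' For these characters nothing is removed: $\Delta_k(p,q;1)$ is twice the sum of the $A_n$-coefficients of the reduced class.

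One small fix: a congruence modulo $2p$ must use the odd lift $Q(q)$. It is then equivalent to the paper's $a_1+qa_2\equiv 0\pmod p$, and your argument only needs that the condition is homogeneous linear.
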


\begin{proof}
The coordinate circle action on $L(p,q)$ lifts naturally to the spinor bundle through the spin double cover.  Consequently, the weights of the lifted action are most naturally expressed through the $\operatorname{Spin}(2)$-character variable $u$.  A spin weight $\mu=\frac12(a_1,a_2)$ contributes $u^{a_1}$, and since $a_1$ is odd in the spin lattice, the resulting characters naturally contain odd powers of $u$.

The ordinary dimension of a character is obtained by evaluating at $u=1$.  Therefore the quotient by the trivial character removes the scalar dimension part and retains precisely the nontrivial spin-Fourier modes.  The expression $\chi_k^+(p,q;u)-\chi_k^-(p,q;u)$ therefore measures the difference between the positive and negative Dirac eigenspaces at the level of spin-Fourier characters.  Passing to the quotient by $\mathbb Z\mathbf 1$ gives a well-defined reduced secondary quantity in $\widetilde R_{\operatorname{spin}}$.
\end{proof}

This spin-cover refinement is the first step in replacing the earlier toy scalar congruence by the actual Dirac-spinor calculation.  The next step is to compute the characters $\chi_k^\pm(p,q;u)$ using the affine congruence-lattice formula for the Dirac spectrum on lens spaces.

\subsection{Affine congruence lattice and parity-shell formula}
\label{subsec:stage-1B-affine-parity-shell}

We replace the scalar congruence used in the preliminary examples by the actual spinor congruence formula for Dirac eigenspaces on lens spaces.  The Dirac spectrum of lens spaces is known, and we use the known affine congruence-lattice description of the spin Dirac spectrum and retain the $\operatorname{Spin}(2)$-Fourier weight information which will later feed into the reduced residue.

For odd $p$, $L(p,q)$ has a unique spin structure.  The even-$p$ case requires keeping track of the choice of spin structure and is not needed for the first lens-space computation.

Recall that $L(p,q)=S^3/\mathbb Z_p, \gamma(z_1,z_2)=(\omega z_1,\omega^qz_2), \omega=\exp(2\pi i/p).$ The spin weights are written as $\mu=\frac12(a_1,a_2), a_1,a_2\in 2\mathbb Z+1.$ The lens-space descent condition is the spinor congruence $a_1+q a_2\equiv 0\pmod p.$ This congruence is the spinorial analogue of the scalar monomial congruence, but it is not the same object.  The oddness of $a_1$ and $a_2$ reflects the half-integral spin weights.

\begin{definition}[Affine congruence lattice]
For odd $p$ and $q\in(\mathbb Z/p\mathbb Z)^\times$, define the affine spin congruence lattice by
\begin{equation}
\mathcal L(p,q)=\left\{(a_1,a_2)\in(2\mathbb Z+1)^2:\ a_1+q a_2\equiv0\pmod p\right\}.
\end{equation}
A point $a=(a_1,a_2)\in\mathcal L(p,q)$ contributes the spin-Fourier monomial $a=(a_1,a_2)\longmapsto u^{a_1}.$ \end{definition}

The ordinary Dirac multiplicity is obtained by counting lattice points.  For the present purpose we do not only count them; we keep track of the first coordinate $a_1$, because $a_1$ is the spin-Fourier weight for the lifted circle action rotating $z_1$.

Two elementary functions on lattice points enter the Dirac formula.

\begin{definition}[One-norm and sign parity]
For $a=(a_1,a_2)\in(2\mathbb Z+1)^2$, define the spin one-norm by $\|a\|_1=\frac{|a_1|+|a_2|}{2}.$ Define the sign-counting parity by $R(a)=\#\{j\in\{1,2\}:a_j<0\}.$ Thus $R(a)$ is the number of negative entries among $a_1$ and $a_2$.
\end{definition}

The eigenvalues of the round Dirac operator on a three-dimensional lens space are $\lambda_k^\pm=\pm\left(k+\frac32\right), k\geq0.$ The eigenspaces for $\lambda_k^+$ and $\lambda_k^-$ are not obtained from only the outer shell $\|a\|_1=k+1$.  The correct formula involves a finite sum over lower shells.  This is why the naive scalar-shell calculation is not correct.

For each $k\geq0$ and $0\leq r\leq k$, define the shell
\begin{equation}
\mathcal L_{k,r}(p,q)=\left\{a\in\mathcal L(p,q):\ \|a\|_1=k-r+1\right\}.
\end{equation}
Equivalently, this shell consists of those odd pairs satisfying $|a_1|+|a_2|=2(k-r+1).$ The integer $r$ labels how far below the outer shell the lattice point lies.

The positive and negative Dirac branches are separated by the parity of $R(a)$.  With the conventions used here, the negative branch is selected by $R(a)\equiv r\pmod2.$ The positive branch is selected by $R(a)\equiv r+1\pmod2.$ This is the $m=2$ cyclic-lens-space specialization of B\"ar's spherical-space-form generating-function formula for the positive and negative Dirac multiplicity series, together with the usual lens-space convention for the diagonal cyclic action~\cite[Theorem~2]{BaerSpaceForms}~\cite[Chapter~5]{GilkeySpherical}.  A different global choice of spin-lift convention would interchange the labels of the two branches, but the relative formulas below are written consistently with the convention displayed here.  \begin{definition}[Positive and negative parity shells]
For $k\geq0$ and $0\leq r\leq k$, define
\begin{equation}
\mathcal L_{k,r}^-(p,q)=\left\{a\in\mathcal L_{k,r}(p,q):\ R(a)\equiv r\pmod2\right\}.
\end{equation}
Similarly, define
\begin{equation}
\mathcal L_{k,r}^+(p,q)=\left\{a\in\mathcal L_{k,r}(p,q):\ R(a)\equiv r+1\pmod2\right\}.
\end{equation}
\end{definition}

We can now state the spin-Fourier character formula used in the rest of the lens-space calculation.

\begin{proposition}[Spin-Fourier character formula]
For odd $p$, the $\operatorname{Spin}(2)$-Fourier characters of the Dirac eigenspaces on $L(p,q)$ are given by $\chi_k^+(p,q;u)=\sum_{r=0}^{k}\sum_{a\in\mathcal L_{k,r}^+(p,q)}u^{a_1}.$ The negative-eigenvalue character is $\chi_k^-(p,q;u)=\sum_{r=0}^{k}\sum_{a\in\mathcal L_{k,r}^-(p,q)}u^{a_1}.$ Consequently, the level-$k$ virtual spin-Fourier difference is $\Delta_k(p,q;u)=\chi_k^+(p,q;u)-\chi_k^-(p,q;u)$, and its reduced class is $\widetilde\Delta_k(p,q;u)=[\Delta_k(p,q;u)]_{\operatorname{red}}$. \end{proposition}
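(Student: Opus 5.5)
The plan is to derive the formula from the representation-theoretic description of the Dirac spectrum on $S^3$, pass to $\mathbb Z_p$-invariants, and keep track of the weight of the lifted $z_1$-rotation along the way.

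\emph{Decomposition on $S^3$.} By Bär's description \cite[Theorem~2]{BaerSpaceForms} (equivalently \cite[Chapter~5]{GilkeySpherical}), the eigenspace of $\lambda_k^\pm$ on $S^3=\operatorname{Spin}(4)/\operatorname{Spin}(3)$ is an explicit $\operatorname{Spin}(4)$-representation. Restricted to the maximal torus $\widetilde T^2\subset \operatorname{Spin}(4)$ covering the coordinate torus, it decomposes into weights $\mu=\frac12(a_1,a_2)$ with $a_1,a_2$ odd.

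\emph{Passing to the quotient.} The eigenspace of $L(p,q)$ is the subspace of vectors invariant under the lift of $\gamma$. For odd $p$ the spin lift is unique, and the lift of $\gamma$ acts on the weight vector $\mu$ by $\omega^{(a_1+qa_2)/2}$. Since $2$ is invertible mod $p$, invariance is exactly the congruence $a_1+qa_2\equiv 0\pmod p$ defining $\mathcal L(p,q)$.

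\emph{The $z_1$-rotation.} The coordinate torus commutes with $\gamma$, so the lifted $z_1$-rotation preserves each invariant eigenspace. It acts on the weight vector $\mu$ by $u^{a_1}$. Hence the spin-Fourier character of the descended eigenspace is $\sum u^{a_1}$, summed over the invariant weights counted with their multiplicity in the $S^3$ eigenspace.

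\emph{Weight multiplicities.} The core step is to compute the torus-weight multiplicities of the $S^3$ eigenspaces in the form $\sum_{r=0}^{k}\sum_{a\in\mathcal L^\pm_{k,r}}$.

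Take the eigenspaces to be the irreducible $\operatorname{Spin}(4)=SU(2)\times SU(2)$ representations $V_{k+1}\otimes V_k$ and $V_k\otimes V_{k+1}$, where $V_n$ has dimension $n+1$. Their weights, in the basis $e_1\pm e_2$, form rectangular grids.

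Rewriting these grids in the $(a_1,a_2)$ coordinates gives nested diamonds $|a_1|+|a_2|=2(k-r+1)$ for $r=0,\dots,k$. A point of the diamond with norm $k-r+1$ occurs in the grid for $\lambda_k^+$ or for $\lambda_k^-$ according to the residue of its parity mod $2$. This is Bär's generating function $\sum_k\dim E_k^\pm t^k$ expanded in two torus variables. The sign count $R(a)$ combined with $r$ should be exactly the bookkeeping that distinguishes the two grids.

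In practice I would expand Bär's generating function, with its $m=2$ product $\prod_j (1-z_jt)^{-1}$-type factors evaluated at the torus variables, as a power series. Then I would match coefficients of $t^k$ against the sum over shells, using the identity that the number of odd pairs with fixed $|a_1|+|a_2|$ and prescribed sign parity is obtained from the decomposition $a_j=\pm(2b_j+1)$.

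\emph{Labels and reduction.} One must check that the labeling of the $+$ and $-$ branches agrees with the stated parity convention $R(a)\equiv r$ versus $R(a)\equiv r+1$. As noted before the statement, a different spin-lift convention only swaps the two branches. The formulas for $\Delta_k$ and $\widetilde\Delta_k$ then follow by subtraction and by reduction modulo the trivial character, as in the preceding proposition.

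\emph{Main obstacle.} I expect the hardest step to be this combinatorial matching of the $SU(2)\times SU(2)$ weight grid with the nested parity shells, in particular getting the parity rule exactly right. I would first verify it by hand for $k=0,1$ on $S^3$, where $\dim E_k^\pm=(k+1)(k+2)$, before writing the general coefficient identity.
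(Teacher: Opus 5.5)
Your proposal is correct. It has the same skeleton as the paper's proof: Dirac eigenspaces on $S^3$, descent as invariance under the lifted generator, and $u^{a_1}$ from the lifted $z_1$-rotation. It does more at the crucial step, however. The paper simply asserts the shell decomposition and the parity rule as the $m=2$ cyclic case of B\"ar's generating-function formula. You instead derive them from the explicit decomposition of the $S^3$ eigenspaces as $V_{k+1}\otimes V_k$ and $V_k\otimes V_{k+1}$ under $\operatorname{Spin}(4)=SU(2)\times SU(2)$. Your route gives a self-contained proof and shows that every weight occurs with multiplicity one. It also makes clear that the branch labelling is only the choice of which tensor product is called the negative eigenspace.

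You do not need the generating-function expansion for the step you flag as the main obstacle. Put $\alpha=(a_1+a_2)/2$ and $\beta=(a_1-a_2)/2$, the weights of the two $SU(2)$ factors. The weights of $V_{k+1}\otimes V_k$ are then exactly the odd pairs with $|\alpha|\leq k+1$, $|\beta|\leq k$ and $\alpha\equiv k+1\pmod 2$, each occurring once. Now $\max(|\alpha|,|\beta|)=\|a\|_1=:s$, and on the shell of radius $s$ one has $\alpha\equiv s+R(a)\pmod 2$. So the conditions reduce to $s\leq k+1$ and $R(a)\equiv k+1-s=r\pmod 2$. On the outer shell $s=k+1$, even $R(a)$ means equal signs, which forces $|\beta|\leq k$ automatically. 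This set is $\bigcup_{r=0}^{k}\mathcal L^-_{k,r}$ before the congruence is imposed. The same argument for $V_k\otimes V_{k+1}$ gives $R(a)\equiv r+1$. Intersecting with the invariant weights yields the stated formula.

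One small precision. For even $q$ the exponent in $\omega^{(a_1+qa_2)/2}$ is not an integer. Write the order-$p$ lift as acting by $\omega^{(a_1+Qa_2)/2}$, where $Q$ is an odd integer congruent to $q$ modulo $p$; the other lift has order $2p$. Invariance is then exactly $a_1+qa_2\equiv 0\pmod p$.
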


\begin{proof}
The known spin Dirac spectrum on a three-dimensional lens space may be expressed in terms of half-integral spin weights satisfying the affine congruence condition.  In the present notation these weights are $\mu=\frac12(a_1,a_2)$, with $a_1,a_2$ odd and satisfying $a_1+q a_2\equiv0\pmod p.$ The eigenvalue level $k$ receives contributions from finitely many shells indexed by $r=0,\ldots,k$.  The shell condition is $\|a\|_1=k-r+1.$ The sign of the Dirac eigenvalue is determined by the parity of the number of negative entries.  The negative branch corresponds to $R(a)\equiv r\pmod2.$ The positive branch corresponds to $R(a)\equiv r+1\pmod2.$ Finally, the lifted coordinate circle action selects the first spin weight, so a surviving lattice point contributes $u^{a_1}$.  Summing these contributions over the positive and negative parity shells gives the stated formulas for $\chi_k^+(p,q;u)$ and $\chi_k^-(p,q;u)$.
\end{proof}

\subsection{Two-variable spin-Fourier residue}
\label{subsec:two-variable-residue}

The one-variable residue $\Delta_k(p,q;u)$ keeps only the first spin exponent $a_1$.  This is natural for the coordinate $\Spin(2)$-action rotating $z_1$, but it discards the second exponent $a_2$.  This second exponent is part of the equivariant spectral data and should be retained directly.  We therefore introduce the full two-variable residue.

\begin{definition}[Two-variable spin-Fourier characters]
For $k\geq0$, define
\begin{equation}
\widehat\chi_k^+(p,q;u,v)
=
\sum_{r=0}^{k}
\sum_{a\in\mathcal L_{k,r}^+(p,q)}
u^{a_1}v^{a_2}.
\end{equation}
Similarly, define
\begin{equation}
\widehat\chi_k^-(p,q;u,v)
=
\sum_{r=0}^{k}
\sum_{a\in\mathcal L_{k,r}^-(p,q)}
u^{a_1}v^{a_2}.
\end{equation}
The two-variable spin-Fourier residue is
\begin{equation}
\widehat\Delta_k(p,q;u,v)
=
\widehat\chi_k^+(p,q;u,v)
-
\widehat\chi_k^-(p,q;u,v).
\end{equation}
\end{definition}

The variables $u$ and $v$ encode the two coordinate spin weights.  Thus $\widehat\Delta_k$ is a finite Laurent polynomial in two variables.  It is attached to the round coordinate-equivariant spin datum, not merely to the bare lens space.

\begin{proposition}[The one-variable residue is a specialization]
For every odd $p$, every $q\in(\mathbb Z/p\mathbb Z)^\times$, and every level $k\geq0$, one has
\begin{equation}
\widehat\Delta_k(p,q;u,1)
=
\Delta_k(p,q;u).
\end{equation}
\end{proposition}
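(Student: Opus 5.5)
The plan is to substitute $v=1$ term by term in the definition of the two-variable characters and compare the result with the Spin-Fourier character formula proposition. Since $v^{a_2}$ evaluates to $1$ at $v=1$ for every integer exponent $a_2$ (positive or negative), each monomial $u^{a_1}v^{a_2}$ specializes to $u^{a_1}$. The specialization is a ring homomorphism from Laurent polynomials in $u,v$ to Laurent polynomials in $u$, so it commutes with finite sums and differences.

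The steps, in order, are as follows.

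First, observe that each parity shell $\mathcal L_{k,r}^\pm(p,q)$ is finite. Its points satisfy $|a_1|+|a_2|=2(k-r+1)$, so both $\widehat\chi_k^\pm$ are genuine finite Laurent polynomials and evaluation at $v=1$ is legitimate. The Laurent ring in $v$ is not a domain issue here: $v=1$ is a unit.

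Second, compute
\[
\widehat\chi_k^\pm(p,q;u,1)=\sum_{r=0}^{k}\sum_{a\in\mathcal L_{k,r}^\pm(p,q)}u^{a_1}.
\]
The index sets are identical to those in the Spin-Fourier character formula proposition, so the right-hand side is exactly $\chi_k^\pm(p,q;u)$.

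Third, subtract the two specializations. Linearity of evaluation then gives
\[
\widehat\Delta_k(p,q;u,1)=\chi_k^+(p,q;u)-\chi_k^-(p,q;u)=\Delta_k(p,q;u).
\]

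There is no real obstacle. The only point needing care is bookkeeping: the one-variable formula must use literally the same shells $\mathcal L_{k,r}^\pm(p,q)$, with the same parity convention $R(a)\equiv r$ or $r+1 \pmod 2$, as the two-variable definition, rather than some projected set of first coordinates. In particular, the specialization must retain multiplicity. Distinct lattice points sharing the same $a_1$ but differing in $a_2$ each contribute a separate $u^{a_1}$ to $\chi_k^\pm$, and this is consistent because the one-variable sum is also indexed over lattice points rather than over values of $a_1$. Checking this against the stated formulas completes the argument for all odd $p$, all units $q$, and all $k\geq 0$.
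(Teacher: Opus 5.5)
Your proposal is correct and matches the paper's argument: set $v=1$ in the definitions of $\widehat\chi_k^\pm$, note that the resulting sums over the same parity shells $\mathcal L_{k,r}^\pm(p,q)$ are exactly $\chi_k^\pm(p,q;u)$ from the Spin-Fourier character formula, and subtract. The points you add about finiteness of the shells and about multiplicities being indexed by lattice points (not by values of $a_1$) are correct; the paper leaves them implicit.
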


\begin{proof}
By definition,
\begin{equation}
\widehat\chi_k^+(p,q;u,1)
=
\sum_{r=0}^{k}
\sum_{a\in\mathcal L_{k,r}^+(p,q)}
u^{a_1}.
\end{equation}
This is exactly $\chi_k^+(p,q;u)$.  The same argument gives
\begin{equation}
\widehat\chi_k^-(p,q;u,1)
=
\chi_k^-(p,q;u).
\end{equation}
Subtracting the two identities gives the claim.
\end{proof}

\begin{example}[The first $L(5,2)$ two-variable residue]
For $L(5,2)$, the first useful level is $k=1$.  The relevant positive sectors are
\begin{equation}
(1,-3),
\qquad
(-1,3).
\end{equation}
The relevant negative sectors are
\begin{equation}
(3,1),
\qquad
(-3,-1).
\end{equation}
Therefore
\begin{equation}
\widehat\Delta_1(5,2;u,v)
=
uv^{-3}
+
u^{-1}v^3
-
u^3v
-
u^{-3}v^{-1}.
\end{equation}
After specializing to $v=1$, this becomes
\begin{equation}
\widehat\Delta_1(5,2;u,1)
=
u+
u^{-1}-
u^3-
u^{-3}.
\end{equation}
Equivalently,
\begin{equation}
\widehat\Delta_1(5,2;u,1)
=
A_1(u)-A_3(u).
\end{equation}
Thus the one-variable residue is the specialization of the full two-variable residue at $v=1$.
\end{example}

\begin{remark}[Why the second coordinate is retained]
The one-variable residue forgets the second spin exponent.  Any perturbative attempt to recover this information indirectly depends on the chosen perturbation and its chamber.  The cleaner invariant bookkeeping is therefore to keep the monomial $v^{a_2}$ from the beginning in $\widehat\Delta_k(p,q;u,v)$.
\end{remark}

\begin{definition}[Symmetric two-variable modes]
When the reflection symmetry pairs $(a_1,a_2)$ with $(-a_1,-a_2)$, it is convenient to write
\begin{equation}
A_{m,n}(u,v)
=
u^m v^n
+
u^{-m}v^{-n}.
\end{equation}
In this notation, the one-variable mode $A_m(u)$ is the specialization
\begin{equation}
A_{m,n}(u,1)=A_m(u).
\end{equation}
\end{definition}

\begin{proposition}[Two-variable refinement]
The two-variable residue $\widehat\Delta_k(p,q;u,v)$ is at least as fine as the one-variable residue $\Delta_k(p,q;u)$.  More precisely, if
\begin{equation}
\widehat\Delta_k(p,q;u,v)
\neq
\widehat\Delta_k(p,q';u,v),
\end{equation}
while
\begin{equation}
\widehat\Delta_k(p,q;u,1)
=
\widehat\Delta_k(p,q';u,1),
\end{equation}
then the two coordinate-equivariant spin data are separated by the full $T^2$-residue but not by the one-variable residue.
\end{proposition}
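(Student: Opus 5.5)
The plan is to derive the statement from the specialization identity $\widehat\Delta_k(p,q;u,1)=\Delta_k(p,q;u)$ of the preceding proposition, together with the observation that $\widehat\Delta_k$ is itself a well-defined function of the coordinate-equivariant spin datum. There are two assertions to establish.

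\emph{Refinement.} First I will show that $\widehat\Delta_k$ is at least as fine as $\Delta_k$. Consider the evaluation homomorphism $\operatorname{ev}_{v=1}\colon\mathbb Z[u^{\pm1},v^{\pm1}]\to\mathbb Z[u^{\pm1}]$. By the specialization proposition, $\Delta_k(p,q;u)=\operatorname{ev}_{v=1}\widehat\Delta_k(p,q;u,v)$ for every odd $p$, every $q$ and every $k$. Suppose $\widehat\Delta_k(p,q;u,v)=\widehat\Delta_k(p,q';u,v)$. Applying $\operatorname{ev}_{v=1}$ to both sides gives $\Delta_k(p,q;u)=\Delta_k(p,q';u)$. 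The same holds after reduction: the reflection $(a_1,a_2)\mapsto(-a_1,-a_2)$ specializes to $u\mapsto u^{-1}$, so $\operatorname{ev}_{v=1}$ maps reflection-symmetric classes to reflection-symmetric classes, and it also maps the trivial character to the trivial character. Consequently it descends to the reduced targets, and equality of reduced two-variable classes implies equality in $\widetilde R_{\operatorname{spin}}$. By contraposition, any separation detected by $\Delta_k$ is also detected by $\widehat\Delta_k$.

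\emph{The ``more precisely'' clause.} This part is essentially logical. The two hypotheses are $\widehat\Delta_k(p,q)\neq\widehat\Delta_k(p,q')$ and $\widehat\Delta_k(p,q;u,1)=\widehat\Delta_k(p,q';u,1)$. By the specialization proposition, the second hypothesis says exactly that $\Delta_k(p,q;u)=\Delta_k(p,q';u)$, so the one-variable residue fails to separate the two data at level $k$. The first hypothesis says that the full $T^2$-residue does separate them.

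\emph{Main obstacle.} The only step needing care is confirming that $\widehat\Delta_k(p,q;u,v)$ is an invariant of the coordinate-equivariant spin datum $(L(p,q),T^2\text{-action})$, rather than of the presentation. It then makes sense to say that the residue ``separates the data.'' I would argue this from the Spin-Fourier character formula: $\widehat\chi_k^\pm$ is the $T^2$-character of the eigenspace $E_k^\pm$ under the lifted action, with $u^{a_1}v^{a_2}$ recording the weight $\tfrac12(a_1,a_2)$. The lift is unique because $p$ is odd, so the spin structure is unique. Hence $\widehat\Delta_k$ is an equivariant isomorphism invariant. I would flag that this invariance holds for the fixed convention of which coordinate corresponds to $u$; a relabeling of the coordinates would swap $u$ and $v$, and the statement is to be read with that convention held fixed.
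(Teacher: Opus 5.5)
Your proposal is correct and follows the paper's argument. Both rest on the specialization identity $\widehat\Delta_k(p,q;u,1)=\Delta_k(p,q;u)$, so the second hypothesis is literally equality of the one-variable residues and the first is the $T^2$-separation; your contrapositive through $\operatorname{ev}_{v=1}$ only makes the ``at least as fine'' clause explicit, and the extra discussion of reduced two-variable targets and equivariant invariance is harmless but unnecessary, since the paper defines $\widehat\Delta_k$ by the lattice formula under a fixed coordinate convention.
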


\begin{proof}
The equality after specializing to $v=1$ says precisely that the two one-variable residues agree:
\begin{equation}
\Delta_k(p,q;u)
=
\Delta_k(p,q';u).
\end{equation}
The inequality before specialization says that the Laurent polynomials differ in the second spin-weight variable.  Hence the distinction lies in the $a_2$-data discarded by the one-variable specialization.
\end{proof}

\begin{example}[A two-variable separation invisible after one specialization]
Consider the coordinate-equivariant spin data for $L(7,5)$ and $L(7,6)$ at level $k=1$.  The one-variable residues agree:
\begin{equation}
\Delta_1(7,5;u)=\Delta_1(7,6;u)=u+u^{-1}.
\end{equation}
However, the two-variable residues are different.  For $L(7,5)$, the surviving points are $(1,-3)$ and $(-1,3)$, hence
\begin{equation}
\widehat\Delta_1(7,5;u,v)=uv^{-3}+u^{-1}v^{3},
\end{equation}
whereas
\begin{equation}
\widehat\Delta_1(7,6;u,v)=uv+u^{-1}v^{-1}.
\end{equation}
After setting $v=1$, both two-variable residues specialize to $u+u^{-1}$.  Thus the full $T^2$-residue can distinguish coordinate spin-weight data which the chosen one-variable residue does not distinguish at that level.
\end{example}

Evaluating at $u=1$ gives the ordinary multiplicity count.  Keeping $u^{a_1}$ instead keeps the chosen $z_1$-rotation action; a coordinate exchange would track $a_2$ and therefore give a different equivariant character.

The first quantity extracted from the formula is the level-wise difference $\Delta_k(p,q;u)=\chi_k^+(p,q;u)-\chi_k^-(p,q;u).$ Its reduced class is obtained by removing the trivial character: $\widetilde\Delta_k(p,q;u)=[\Delta_k(p,q;u)]_{\operatorname{red}}.$ In the examples below, it is convenient to use the notation $A_n(u)=u^n+u^{-n}.$ Thus expressions such as $A_1(u)-A_3(u)$ represent dimension-zero but spin-Fourier-nontrivial differences.

The first detailed instance of the character formula is given in Example~\ref{ex:L52-stage1-canonical}, where one obtains $\Delta_1(5,2;u)=A_1(u)-A_3(u)$.

\begin{example}[Fixed $p$, different $q$]
For $p=7$, the formula distinguishes the $q=2$ and $q=3$ coordinate actions at the level of the first useful reduced spin-Fourier difference.  One obtains $\Delta_{\text{first}}(7,2;u)=A_1(u)-A_5(u).$ On the other hand, $\Delta_{\text{first}}(7,3;u)=A_1(u)-A_3(u).$ Thus the reduced spin-Fourier residue is sensitive to the arithmetic parameter $q$, once the coordinate $\operatorname{Spin}(2)$-action has been fixed.
\end{example}

The orientation behavior is important later.  We prove in Proposition~\ref{prop:orientation-sign-rule-equivariant}, in~\autoref{subsec:stage-1D-equivariant-interpretation}, that
\begin{equation}
\Delta_k(p,-q;u)=-\Delta_k(p,q;u).
\end{equation}
The proof is the sign-reversal argument obtained by sending
\begin{equation}
(a_1,a_2)\longmapsto(a_1,-a_2).
\end{equation}

The sign rule under $q\mapsto -q$ is an orientation statement.  It is distinct from the coordinate exchange $q\mapsto q^{-1}$, which changes the chosen spin-Fourier weight from $a_1$ to $a_2$.

The affine congruence calculation therefore yields a precise computational rule.  The known lens-space Dirac spectrum provides the congruence lattice and parity-shell count, while the present refinement retains the $\operatorname{Spin}(2)$-Fourier monomial $u^{a_1}$.  This produces the level-wise reduced characters $\widetilde\Delta_k(p,q;u)$, which will later be used as shell data for the equivariant $\eta$ character.

\subsection{Compact table of reduced spin-Fourier residues}
\label{subsec:stage-1C-first-table}

Applying the character formula from~\autoref{subsec:stage-1B-affine-parity-shell} to small lens spaces gives the following checks.  The table is a bookkeeping check for the reduced spin-Fourier residues used later in the $\eta$ computation.

Recall the notation $A_n(u)=u^n+u^{-n}.$ The level-wise virtual spin-Fourier difference is $\Delta_k(p,q;u)=\chi_k^+(p,q;u)-\chi_k^-(p,q;u).$ The ordinary dimension of this virtual character is obtained by evaluating at $u=1$: $\dim \Delta_k(p,q)=\Delta_k(p,q;1).$ Thus a dimension-zero but spin-Fourier-nontrivial difference satisfies $\Delta_k(p,q;1)=0$, while its reduced class satisfies $\widetilde\Delta_k(p,q;u)\neq 0$ in $\widetilde R_{\operatorname{spin}}$. \begin{definition}[First useful level]
For fixed $p$ and $q$, a level $k$ is called useful if $\dim \chi_k^+(p,q;u)=\dim \chi_k^-(p,q;u)\neq 0$ and $\chi_k^+(p,q;u)\neq \chi_k^-(p,q;u).$ The first useful level is the smallest $k$ with this property, whenever such a $k$ exists in the computed range.
\end{definition}

At a useful level, the virtual difference $\Delta_k(p,q;u)=\chi_k^+(p,q;u)-\chi_k^-(p,q;u)$ has zero ordinary dimension but can still be nonzero in the reduced spin-Fourier target.  These are the examples most relevant for the later reduced-residue construction.

\begin{remark}
The phrase ``first useful level'' is a computational convention, not an invariant definition.  It is used to select small, readable examples.  The object retained throughout is the full round-metric sequence of level-wise differences $\{\Delta_k(p,q;u)\}_{k\geq0}.$ \end{remark}

The smallest example is as follows.

\begin{example}[$L(5,2)$]
\label{ex:L52-stage1-canonical}
For $L(5,2)$, the first useful level is $k=1$.  The two characters are $\chi_1^+(5,2;u)=A_1(u).$ Also, $\chi_1^-(5,2;u)=A_3(u).$ Therefore $\Delta_1(5,2;u)=A_1(u)-A_3(u).$ This has zero ordinary dimension: $\Delta_1(5,2;1)=2-2=0.$ However, it is nonzero in the reduced spin-Fourier target: $A_1(u)-A_3(u)\neq 0\ \text{in} \widetilde R_{\operatorname{spin}}.$ \end{example}

For the main text it is enough to keep a small sample of the reduced spin-Fourier residues.  These entries show the two features needed below: the residue can be nonzero after ordinary dimension is forgotten, and it changes with the arithmetic parameter $q$ once the coordinate $\operatorname{Spin}(2)$-action is fixed.  The full table for odd $p\leq13$ is included in Appendix~\ref{app:first-useful-spin-fourier-table} as a bookkeeping check on the affine congruence and parity-shell computation.

\begin{table}[h]
\centering
\renewcommand{\arraystretch}{1.2}
\begin{tabular}{c|c|c|c|c}
$p$ & $q$ & first useful $k$ & $\chi_k^+(p,q;u)$ & $\chi_k^-(p,q;u)$ \\
\hline
$5$ & $2$ & $1$ & $A_1$ & $A_3$ \\
$7$ & $2$ & $2$ & $A_1$ & $A_5$ \\
$7$ & $3$ & $2$ & $A_1$ & $A_3$ \\
$11$ & $2$ & $3$ & $A_1$ & $A_5$ \\
$11$ & $4$ & $3$ & $A_1$ & $A_7$
\end{tabular}
\caption{Representative first useful spin-Fourier residues.  The full table for odd $p\leq13$ is given in~\autoref{app:first-useful-spin-fourier-table}.}
\label{tab:first-useful-spin-fourier-sample}
\end{table}

Equivalently, the first useful differences displayed in Table~\ref{tab:first-useful-spin-fourier-sample} are $\Delta_{\text{first}}(p,q;u)=\chi_k^+(p,q;u)-\chi_k^-(p,q;u),$ where $k$ is the first useful level shown in the table.  For example, $\Delta_{\text{first}}(7,2;u)=A_1(u)-A_5(u).$ On the other hand, $\Delta_{\text{first}}(7,3;u)=A_1(u)-A_3(u).$ Thus, at fixed $p=7$, changing the arithmetic parameter $q$ changes the reduced spin-Fourier residue.

\begin{proposition}[Arithmetic sensitivity of the first residue]
\label{prop:arithmetic-sensitivity-first-residue}
The first useful reduced spin-Fourier residue depends nontrivially on the parameter $q$ once the coordinate $\operatorname{Spin}(2)$-action is fixed.  In particular, $\Delta_{\text{first}}(7,2;u)\neq \Delta_{\text{first}}(7,3;u)$ in $\widetilde R_{\operatorname{spin}}$.
\end{proposition}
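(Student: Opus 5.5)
The plan is to compute the two first useful residues directly from the spin-Fourier character formula of \autoref{subsec:stage-1B-affine-parity-shell}, and then compare them in $\widetilde R_{\operatorname{spin}}$ by looking at their coefficients on the basis $\{A_n\}_{n\geq1}$.

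For each of $q=2$ and $q=3$ with $p=7$, I will list the odd pairs $(a_1,a_2)$ satisfying $a_1+qa_2\equiv 0\pmod 7$ on the shells $|a_1|+|a_2|=2(k-r+1)$. I will do this for $k=0,1,2$ and $0\leq r\leq k$. Each point is sorted into $\mathcal L^+_{k,r}$ or $\mathcal L^-_{k,r}$ according to the parity of $R(a)$ relative to $r$.

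The enumeration is short, because only the shells of one-norm $1,2,3$ occur, and these contain the odd pairs with $|a_1|+|a_2|\in\{2,4,6\}$:
\begin{equation}
(\pm1,\pm1),\qquad (\pm1,\pm3),\ (\pm3,\pm1),\qquad (\pm1,\pm5),\ (\pm5,\pm1),\ (\pm3,\pm3).
\end{equation}
For $q=2$ and $q=3$ I will test the congruence on each pair. Then I will check two things:
\begin{enumerate}
\item levels $k=0,1$ are not useful, meaning either both characters vanish, their dimensions differ, or the characters coincide;
\item at $k=2$ the characters are as in Table~\ref{tab:first-useful-spin-fourier-sample}, namely $\chi_2^+(7,2;u)=A_1$, $\chi_2^-(7,2;u)=A_5$, $\chi_2^+(7,3;u)=A_1$ and $\chi_2^-(7,3;u)=A_3$.
\end{enumerate}
The points are paired under $a\mapsto -a$, which preserves the shell and the congruence. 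Since $R(-a)=2-R(a)$, this map also preserves the branch, so every character is automatically a combination of the $A_n$.

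With these characters in hand,
\begin{equation}
\Delta_{\text{first}}(7,2;u)-\Delta_{\text{first}}(7,3;u)=A_3(u)-A_5(u).
\end{equation}
By the definition of $\widetilde R_{\operatorname{spin}}$ as the free abelian group on the $A_n$ with $n\geq1$, this element is nonzero: it has coefficient $-1$ on $A_5$. Quotienting by $\mathbb Z\mathbf 1$ does not affect it, since no constant term appears. This proves both the specific inequality and the general assertion that the residue depends on $q$.

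The main obstacle is the bookkeeping at the lower levels. I must confirm that $k=2$ really is the \emph{first} useful level for both values of $q$, so that $\Delta_{\text{first}}$ is the displayed object, and I must apply the parity convention $R(a)\equiv r$ versus $R(a)\equiv r+1$ correctly on the inner shells $r=1,2$. If a lower level turned out to be useful, I would instead compare the residues at that level, and the argument would be unchanged: the same finite check that the two Laurent polynomials have different $A_n$-coefficients.
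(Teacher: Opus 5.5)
Your proposal is correct and follows the paper's argument, which reads $\Delta_{\text{first}}(7,2;u)=A_1-A_5$ and $\Delta_{\text{first}}(7,3;u)=A_1-A_3$ off Table~\ref{tab:first-useful-spin-fourier-sample} and concludes because $A_3$ and $A_5$ are distinct free generators of $\widetilde R_{\operatorname{spin}}$. Your direct enumeration goes further by justifying those table entries, and it checks out: the surviving points are $\pm(1,3),\pm(5,1)$ for $q=2$ and $\pm(3,-1),\pm(1,-5)$ for $q=3$; level $0$ is empty and level $1$ has one branch empty and the other two-dimensional, so $k=2$ is indeed the first useful level in both cases.
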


\begin{proof}
From Table~\ref{tab:first-useful-spin-fourier-sample}, one has $\Delta_{\text{first}}(7,2;u)=A_1(u)-A_5(u).$ The same table gives $\Delta_{\text{first}}(7,3;u)=A_1(u)-A_3(u).$ The elements $A_1(u)$, $A_3(u)$, and $A_5(u)$ are distinct generators of the reduced spin-Fourier target.  Therefore $A_1(u)-A_5(u)\neq A_1(u)-A_3(u)$ in $\widetilde R_{\operatorname{spin}}$. Hence, the first useful reduced spin-Fourier residue changes when $q$ changes from $2$ to $3$.
\end{proof}

The full table in~\autoref{app:first-useful-spin-fourier-table} also reflects the orientation sign rule from Proposition~\ref{prop:orientation-sign-rule-equivariant}.  For example, $\Delta_{\text{first}}(7,5;u)=A_5(u)-A_1(u).$ This is the negative of $\Delta_{\text{first}}(7,2;u)=A_1(u)-A_5(u).$ Similarly, $\Delta_{\text{first}}(7,4;u)=A_3(u)-A_1(u).$ This is the negative of $\Delta_{\text{first}}(7,3;u)=A_1(u)-A_3(u).$ Thus the computed table is consistent with the identity $\Delta_k(p,-q;u)=-\Delta_k(p,q;u).$ 

The sample above, together with the full appendix table, shows that the reduced spin-Fourier residue is not an engineered two-term example.  It is the output of the affine congruence and parity-shell formula, and already for $p\leq13$ it shows nontrivial dependence on the arithmetic parameter $q$.  This provides a concrete spectral ingredient for the $\eta$ computation.

\subsection{Equivariant interpretation and lens-space equivalences}
\label{subsec:stage-1D-equivariant-interpretation}

The representative table in~\autoref{subsec:stage-1C-first-table}, together with the full table in~\autoref{app:first-useful-spin-fourier-table}, shows that the reduced spin-Fourier residue depends on the arithmetic parameter $q$.  Before using this dependence as a distinguishing statement, one has to check what kind of invariant is being computed.  This subsection separates three notions: the bare lens space, the oriented lens space, and the equivariant spin lens space with a specified coordinate circle action.

The residue constructed above depends on the lifted circle action, the round metric used to group eigenspaces by Dirac level, and the level under consideration.  The object being studied is the finite-level round equivariant package $\left(L(p,q),\ \text{spin structure},\ \operatorname{Spin}(2)\text{-action rotating }z_1,\ g_{\text{round}},\ k\right).$ For odd $p$, the spin structure is unique; the coordinate action and the level remain part of the data.

We recall the action: $e^{i\theta}\cdot[z_1,z_2]=[e^{i\theta}z_1,z_2].$ This action selects the first spin weight $a_1$, so a lattice point $a=(a_1,a_2)$ contributes the spin-Fourier monomial $u^{a_1}.$ If a diffeomorphism changes the coordinate used to define the circle action, then it can change the spin-Fourier character even when it preserves the ordinary Dirac multiplicities.

\begin{definition}[Equivariant spin lens-space datum]
For the purposes of the reduced spin-Fourier residue, a finite-level round equivariant spin lens-space datum is the tuple $\mathcal X^{[k]}_{p,q}=\left(L(p,q),\ \mathfrak s,\ \rho_1,\ g_{\text{round}},\ k\right),$ where $\mathfrak s$ is the spin structure, $\rho_1$ is the lifted coordinate circle action rotating $z_1$, $g_{\text{round}}$ is the round metric, and $k$ is the Dirac level being tracked.  For odd $p$, the spin structure is unique, so the essential extra data are the chosen coordinate action, the round metric, and the level.
\end{definition}

With this terminology, the level-wise virtual spin-Fourier difference $\Delta_k(p,q;u)=\chi_k^+(p,q;u)-\chi_k^-(p,q;u)$, and hence its reduced class $\widetilde\Delta_k(p,q;u)$, are naturally associated to $\mathcal X^{[k]}_{p,q}$, not merely to the unoriented diffeomorphism type of $L(p,q)$.

\subsubsection*{The sign change $q\mapsto -q$}

The first consistency check concerns the transformation $q\mapsto -q$.  The congruence for $L(p,q)$ is $a_1+q a_2\equiv0\pmod p.$ The congruence for $L(p,-q)$ is $a_1-q a_2\equiv0\pmod p.$ There is a simple bijection between the two congruence lattices: $(a_1,a_2)\longmapsto(a_1,-a_2).$ Indeed, if $a_1+q a_2\equiv0\pmod p,$ then $a_1-q(-a_2)\equiv0\pmod p.$ This map preserves the one-norm: $\frac{|a_1|+|a_2|}{2}=\frac{|a_1|+|-a_2|}{2}.$ It also preserves the spin-Fourier monomial: $u^{a_1}\longmapsto u^{a_1}.$ However, it changes the parity of the number of negative entries.  If $R(a)=\#\{j:a_j<0\},$ then $R(a_1,-a_2)\equiv R(a_1,a_2)+1\pmod2.$ Thus the positive and negative parity shells are interchanged.

\begin{proposition}[Orientation sign rule]
\label{prop:orientation-sign-rule-equivariant}
For every $k\geq0$, the level-wise spin-Fourier difference satisfies $\Delta_k(p,-q;u)=-\Delta_k(p,q;u).$ \end{proposition}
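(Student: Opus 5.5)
The plan is to use the explicit parity-shell formula for $\chi_k^\pm(p,q;u)$ from \autoref{subsec:stage-1B-affine-parity-shell} and to show that the reflection $\sigma(a_1,a_2)=(a_1,-a_2)$ exchanges the positive and negative shells of $L(p,q)$ and $L(p,-q)$. Throughout, the level $k$ and the shell index $r$ stay fixed; only the lattice and the parity label change.

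The first step is to check that $\sigma$ is a bijection $\mathcal L(p,q)\to\mathcal L(p,-q)$. It maps odd pairs to odd pairs. If $a_1+qa_2\equiv 0 \pmod p$, then $a_1+(-q)(-a_2)\equiv 0\pmod p$, as computed just before the statement. The map $\sigma$ is its own inverse, and it is an involution between the two lattices in both directions. Since $\|\sigma(a)\|_1=\|a\|_1$, it restricts to a bijection $\mathcal L_{k,r}(p,q)\to\mathcal L_{k,r}(p,-q)$ for every $k$ and every $0\le r\le k$.

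The second step is the parity shift. Since $a_2$ is odd, it is never zero, so negating it changes the count of negative entries by exactly one. Hence $R(\sigma(a))=R(a)\pm1\equiv R(a)+1\pmod 2$. It follows that $\sigma$ carries $\mathcal L_{k,r}^-(p,q)$, where $R\equiv r$, onto $\mathcal L_{k,r}^+(p,-q)$, where $R\equiv r+1$. Conversely, it carries $\mathcal L_{k,r}^+(p,q)$ onto $\mathcal L_{k,r}^-(p,-q)$. The first coordinate is untouched, so the monomial $u^{a_1}$ is preserved.

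The third step is to sum these bijections over $r=0,\dots,k$ using the Spin-Fourier character formula. This gives $\chi_k^+(p,-q;u)=\chi_k^-(p,q;u)$ and $\chi_k^-(p,-q;u)=\chi_k^+(p,q;u)$. Subtracting the two identities yields $\Delta_k(p,-q;u)=-\Delta_k(p,q;u)$. Because the reduction map to $\widetilde R_{\operatorname{spin}}$ is additive, the same sign rule holds for the reduced classes $\widetilde\Delta_k$.

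No step is really an obstacle. The only point needing care is that $-q$ is read as a residue mod $p$ and is still a unit, so that $L(p,-q)$ is defined. One also has to apply the parity convention (negative branch $R\equiv r$) in the same way for both lattices. The nonvanishing of $a_2$, guaranteed by oddness, is what makes the parity flip exact. The same argument applied to $\widehat\chi_k^\pm$ gives the two-variable version $\widehat\Delta_k(p,-q;u,v)=-\widehat\Delta_k(p,q;u,v^{-1})$, which is consistent with specializing at $v=1$.
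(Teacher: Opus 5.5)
Your proof is correct and is the same as the paper's: the reflection $(a_1,a_2)\mapsto(a_1,-a_2)$ is a bijection $\mathcal L(p,q)\to\mathcal L(p,-q)$ that preserves the one-norm shell and the monomial $u^{a_1}$ and flips $R(a)$ modulo $2$, so it interchanges $\chi_k^+$ and $\chi_k^-$. Your two-variable variant $\widehat\Delta_k(p,-q;u,v)=-\widehat\Delta_k(p,q;u,v^{-1})$ is also correct; it is a small extension that the paper does not state.
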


\begin{proof}
The map $(a_1,a_2)\longmapsto(a_1,-a_2)$ is a bijection between the affine congruence lattice for $q$ and the affine congruence lattice for $-q$.  It preserves the one-norm shell and the monomial $u^{a_1}$.  Since it changes $R(a)$ by one modulo $2$, it sends positive parity shells for $q$ to negative parity shells for $-q$, and negative parity shells for $q$ to positive parity shells for $-q$.  Hence $\chi_k^+(p,-q;u)=\chi_k^-(p,q;u).$ Similarly, $\chi_k^-(p,-q;u)=\chi_k^+(p,q;u).$ Subtracting gives $\Delta_k(p,-q;u)=-\Delta_k(p,q;u).$ \end{proof}

\begin{remark}
This sign rule is consistent with the interpretation of $\Delta_k(p,q;u)$ as an oriented spectral quantity.  Replacing $q$ by $-q$ reverses the parity assignment between the positive and negative Dirac branches, and the reduced difference changes sign.
\end{remark}

\subsubsection*{The coordinate exchange $q\mapsto q^{-1}$}

The second consistency check concerns the transformation $q\mapsto q^{-1}$.  In the ordinary theory of lens spaces, this transformation is related to exchanging the two coordinates of $S^3$.  At the level of spin weights, this exchange sends $(a_1,a_2)\longmapsto(a_2,a_1).$ Suppose that $a_1+q a_2\equiv0\pmod p.$ Multiplying by $q^{-1}$ gives $q^{-1}a_1+a_2\equiv0\pmod p.$ Equivalently, $a_2+q^{-1}a_1\equiv0\pmod p.$ Thus the coordinate exchange identifies the congruence for $q$ with the congruence for $q^{-1}$.

The one-norm is preserved: $\frac{|a_1|+|a_2|}{2}=\frac{|a_2|+|a_1|}{2}.$ The sign-counting parity is also preserved: $R(a_2,a_1)=R(a_1,a_2).$ Therefore the ordinary positive and negative Dirac multiplicities are preserved under this coordinate exchange.

However, the spin-Fourier character used here is not preserved as a character in the same variable.  Before the coordinate exchange, the monomial is $u^{a_1}.$ After the coordinate exchange, the first coordinate is $a_2$, and the corresponding monomial is $u^{a_2}.$ Thus the coordinate exchange preserves the ordinary multiplicity data but changes the chosen circle weight.

\begin{proposition}[Action-dependence under coordinate exchange]
The transformation $q\mapsto q^{-1}$ preserves the ordinary Dirac multiplicity data after exchanging the coordinates of $S^3$, but it does not preserve the spin-Fourier character attached to the fixed $z_1$-rotation action.
\end{proposition}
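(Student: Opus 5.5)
The proposition has two parts. I will prove the first, preservation of the ordinary multiplicities, by an explicit bijection. I will prove the second, non-preservation of the character, with one concrete counterexample.

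\textbf{Preservation of multiplicities.} Let $\sigma(a_1,a_2)=(a_2,a_1)$. By the computation just above, $\sigma$ maps $\mathcal L(p,q)$ bijectively onto $\mathcal L(p,q^{-1})$, using that $q$ is a unit mod $p$. Its inverse is the same swap with $q^{-1}$ in place of $q$, since $(q^{-1})^{-1}=q$. The swap also preserves the one-norm $\|a\|_1$ and the sign-count $R(a)$. Hence for every $k\ge0$ and $0\le r\le k$ it restricts to bijections
\begin{equation}
\mathcal L^{\pm}_{k,r}(p,q)\to\mathcal L^{\pm}_{k,r}(p,q^{-1}).
\end{equation}
Summing over $r$ and evaluating the spin-Fourier character formula at $u=1$ gives
\begin{equation}
\chi_k^\pm(p,q;1)=\chi_k^\pm(p,q^{-1};1),
\end{equation}
so the ordinary positive and negative Dirac multiplicities agree at every level. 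In two variables the same bijection gives
\begin{equation}
\widehat\chi_k^\pm(p,q^{-1};u,v)=\widehat\chi_k^\pm(p,q;v,u).
\end{equation}
This shows precisely that the $z_1$-weight of $L(p,q^{-1})$ is the $z_2$-weight of $L(p,q)$.

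\textbf{Non-preservation of the character.} Showing that the one-variable character is not preserved requires only one explicit instance. I will use $p=7$, $q=3$, $q^{-1}=5$. The paper already lists $\Delta_{\text{first}}(7,3;u)=A_1(u)-A_3(u)$ and $\Delta_{\text{first}}(7,5;u)=A_5(u)-A_1(u)$. These are distinct elements of $\widetilde R_{\operatorname{spin}}$, since $A_1$, $A_3$, $A_5$ are distinct free generators.

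The step that needs care is confirming that both entries are taken at the same level $k$. Both first useful levels should be $k=2$, which I will verify from the appendix table. If they were not equal, I would instead recompute the level-$2$ characters directly from the parity shells. That means listing the odd pairs with $|a_1|+|a_2|\le 6$ satisfying $a_1+3a_2\equiv0$ or $a_1+5a_2\equiv0 \pmod 7$, and comparing their $a_1$-exponents with the corresponding $a_2$-exponents. The two-variable identity above already predicts the outcome: the two characters differ exactly when the $a_1$- and $a_2$-weight distributions of the shell points differ.

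No analytic input is needed beyond the spin-Fourier character formula. The only potential obstacle is this small finite bookkeeping check.
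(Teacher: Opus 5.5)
Your proof is correct. The first half is the paper's argument: the swap $(a_1,a_2)\mapsto(a_2,a_1)$ identifies $\mathcal L(p,q)$ with $\mathcal L(p,q^{-1})$ and preserves $\|a\|_1$ and $R(a)$, hence the parity shells and the multiplicities at $u=1$.

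The second half takes a different route. The paper's proof only observes that the monomial $u^{a_1}$ is carried to $u^{a_2}$, and asserts that, unless the circle action is transported, ``this gives a different character.'' It does not show, inside the proof, a case where the $a_1$- and $a_2$-weight distributions actually differ. That comparison appears only afterwards, in the discussion of $\Delta_{\text{first}}(7,2;u)$ versus $\Delta_{\text{first}}(7,4;u)$.

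The claim can only mean ``not in general'': when $q^2\equiv 1\pmod p$ the two data coincide and the character is trivially preserved. So an explicit witness like yours is what actually proves it. Your identity $\widehat\chi_k^\pm(p,q^{-1};u,v)=\widehat\chi_k^\pm(p,q;v,u)$ is the precise form of the paper's informal remark. It reduces the question to comparing $\widehat\chi_k^\pm(p,q;u,1)$ with $\widehat\chi_k^\pm(p,q;1,u)$.

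Your witness checks out by hand, so you need not rely on the appendix table. For $q=3$, the only lattice points with $\|a\|_1\le 3$ are $\pm(3,-1)$ and $\pm(1,-5)$, all with $R=1$. This gives $\chi_2^+(7,3;u)=A_1(u)$ and $\chi_2^-(7,3;u)=A_3(u)$. Their swaps give $\chi_2^+(7,5;u)=A_5(u)$ and $\chi_2^-(7,5;u)=A_1(u)$. In fact the characters already differ at level $1$: $\chi_1^+(7,3;u)=A_3(u)\neq A_1(u)=\chi_1^+(7,5;u)$.
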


\begin{proof}
The map $(a_1,a_2)\longmapsto(a_2,a_1)$ identifies the affine congruence lattice for $q$ with the affine congruence lattice for $q^{-1}$.  It preserves both the one-norm and the parity $R(a)$.  Therefore it preserves the ordinary positive and negative multiplicities obtained by evaluating the spin-Fourier characters at $u=1$.

However, the spin-Fourier character is defined using the first coordinate.  Under the coordinate exchange, the first coordinate changes from $a_1$ to $a_2$.  Hence the monomial $u^{a_1}$ is transformed into $u^{a_2}$.  Unless the coordinate circle action is transported along the diffeomorphism, this gives a different character.  Thus the spin-Fourier residue is not invariant under $q\mapsto q^{-1}$ as a character for the fixed $z_1$-rotation action.
\end{proof}

Thus the table entries list equivariant spin-spectral residues for a specified coordinate action, rather than ordinary lens-space invariants.

\subsubsection*{Examples from the residue tables}

Let $A_n(u)=u^n+u^{-n}.$ For $p=7$, the table gives $\Delta_{\text{first}}(7,2;u)=A_1(u)-A_5(u).$ Since $-2\equiv5\pmod7,$ the sign rule predicts $\Delta_{\text{first}}(7,5;u)=-\Delta_{\text{first}}(7,2;u).$ Indeed, the table gives $\Delta_{\text{first}}(7,5;u)=A_5(u)-A_1(u).$ Similarly, $\Delta_{\text{first}}(7,3;u)=A_1(u)-A_3(u).$ Since $-3\equiv4\pmod7,$ the sign rule predicts $\Delta_{\text{first}}(7,4;u)=-\Delta_{\text{first}}(7,3;u).$ The table gives $\Delta_{\text{first}}(7,4;u)=A_3(u)-A_1(u).$ On the other hand, $2^{-1}\equiv4\pmod7.$ The residues $\Delta_{\text{first}}(7,2;u)=A_1(u)-A_5(u)$ and $\Delta_{\text{first}}(7,4;u)=A_3(u)-A_1(u)$ are not equal.  They are also not simply negatives of each other.  This reflects the change in the circle action under coordinate exchange.

\begin{proposition}[Equivariant nature of the residue]
The reduced spin-Fourier residue $\widetilde\Delta_k(p,q;u)$ is naturally a finite-level equivariant spin-spectral quantity attached to $\mathcal X^{[k]}_{p,q}=\left(L(p,q),\mathfrak s,\rho_1,g_{\text{round}},k\right),$ where $\rho_1$ is the lifted coordinate circle action rotating $z_1$.  It is not, by itself, a smooth or topological invariant of the bare unoriented lens space.
\end{proposition}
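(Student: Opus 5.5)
The claim has two parts. First, $\widetilde\Delta_k(p,q;u)$ is attached to the datum $\mathcal X^{[k]}_{p,q}$ and not only to the bare space. Second, it is not a smooth or topological invariant of the bare unoriented lens space. I would prove the first part by tracing its construction. I would prove the second part by exhibiting two diffeomorphic unoriented lens spaces whose residues differ.

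For the first part, recall how the residue is built. The level $k$ is fixed by the round-metric eigenvalues $\lambda_k^\pm=\pm(k+\frac32)$. The splitting into $\chi_k^+$ and $\chi_k^-$ uses the parity-shell formula of the Spin-Fourier character formula, which depends on the spin structure; for odd $p$ this is unique. The variable $u$ records the weight $a_1$ of the lifted action $\rho_1$. Every input to the Spin-Fourier character formula is therefore one of the entries of $\mathcal X^{[k]}_{p,q}$. So $\widetilde\Delta_k$ is a well-defined function of that tuple, and it is equivariant by construction, being a virtual $\operatorname{Spin}(2)$-character.

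For the second part, I would use the classical fact that $L(p,q)$ and $L(p,q')$ are diffeomorphic as unoriented manifolds whenever $q'\equiv\pm q^{\pm1}\pmod p$. Take $p=7$. Then $L(7,2)$, $L(7,4)$ and $L(7,5)$ are mutually diffeomorphic, since $4\equiv 2^{-1}$ and $5\equiv-2$. The table gives
$$\Delta_{\text{first}}(7,2;u)=A_1(u)-A_5(u),\qquad \Delta_{\text{first}}(7,5;u)=A_5(u)-A_1(u),\qquad \Delta_{\text{first}}(7,4;u)=A_3(u)-A_1(u).$$
These are pairwise distinct in $\widetilde R_{\operatorname{spin}}$, because the generators $A_n$ are free. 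By the Orientation sign rule and the Action-dependence under coordinate exchange proposition, these differences come from reversing orientation and from transporting the circle action to the other coordinate. Both operations leave the bare unoriented diffeomorphism type unchanged. Hence no function of the bare unoriented lens space can equal $\widetilde\Delta_k$.

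The main obstacle is level-wise bookkeeping. The residue is attached to a fixed level $k$, and "first useful level" is only a convention. I need the first useful level to be the same $k$ for $q=2,4,5$, so that the comparison is between residues at one common level. I would verify this directly from the lattice counts. Alternatively, I would argue it from the two bijections: $(a_1,a_2)\mapsto(a_1,-a_2)$ and $(a_1,a_2)\mapsto(a_2,a_1)$ both preserve the one-norm shells, so they preserve the level $k=2$. Once that is checked, the rest is reading off the table.
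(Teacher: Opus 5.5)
Your proposal is correct and follows the paper's route: the residue depends on the datum through the monomial $u^{a_1}$ selected by $\rho_1$, and the orientation sign rule under $q\mapsto -q$ together with the coordinate-exchange behaviour under $q\mapsto q^{-1}$ show that it is not constant on unoriented diffeomorphism classes. The paper's proof says this only qualitatively; you supply explicit witnesses at the common level $k=2$, namely $L(7,2)\cong L(7,4)\cong L(7,5)$, which the paper discusses just before the proposition, and this makes the non-invariance claim fully concrete.
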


\begin{proof}
The construction of $\widetilde\Delta_k(p,q;u)$ uses the spin-Fourier monomial $u^{a_1}$.  This monomial is determined by the chosen coordinate circle action rotating $z_1$.  A diffeomorphism which preserves the lens space but transports the coordinate action may change the chosen spin weight.  Therefore the residue is attached to the equivariant spin package, not merely to the underlying manifold.  The sign rule under $q\mapsto -q$ and the coordinate-exchange behavior under $q\mapsto q^{-1}$ show explicitly how the residue depends on this additional equivariant data.
\end{proof}

The table entries are therefore consistent with orientation reversal, while their behavior under $q\mapsto q^{-1}$ reflects the dependence on the chosen coordinate action.  This is the equivariant category used in the $\eta$ construction below.

\subsection{Output of the spin-Fourier calculation}
\label{subsec:stage-1E-stage-1-output}

The preceding subsections give the finite-level equivariant data used later.  For odd $p$, the round spin Dirac eigenspaces are computed from
\begin{equation}
\mathcal L(p,q)=
\left\{(a_1,a_2)\in(2\mathbb Z+1)^2:
a_1+qa_2\equiv0\pmod p\right\},
\end{equation}
with the one-norm shell condition and the parity split between the positive and negative branches.  The one-variable character for the $z_1$-circle is
\begin{equation}
\Delta_k(p,q;u)=\chi_k^+(p,q;u)-\chi_k^-(p,q;u),
\end{equation}
and the coordinate-torus refinement is
\begin{equation}
\widehat\Delta_k(p,q;u,v)
=
\widehat\chi_k^+(p,q;u,v)
-
\widehat\chi_k^-(p,q;u,v).
\end{equation}
They are related by
\begin{equation}
\widehat\Delta_k(p,q;u,1)=\Delta_k(p,q;u).
\end{equation}

These finite Laurent polynomials are attached to the chosen coordinate-equivariant spin datum.  The $v=1$ specialization is the one used by the residual circle in the main $\eta$ calculation, while the two-variable form gives the ambient $T^2$-bookkeeping and explains which information is lost after choosing a single coordinate circle.

\FloatBarrier

\section{Perturbative diagnostic and why it is not the invariant}
\label{sec:perturbative-diagnostic-short}

The earlier route to the second spin weight was perturbative.  One perturbs the Dirac operator by a zero-order self-adjoint term and studies the induced crossing form on a fixed eigenspace.  This diagnostic helps identify the missing spin-weight information, but it is not the main invariant here.  The APS spectral-flow and equivariant spectral-flow motivation for this diagnostic comes from the coauthored warped-cylinder models~\cite{KimuraSharmaWarpedCylinder,KimuraSharmaReflection}.

Let $Y=L(p,q)$, and let $D$ be the spin Dirac operator on $Y$.  For a real one-form $\vartheta$, with Clifford convention $c(\xi)^*=-c(\xi)$, the operator
\begin{equation}
K_\vartheta=i c(\vartheta)
\end{equation}
is self-adjoint.  The perturbed path is
\begin{equation}
D_\alpha=D+\alpha K_\vartheta.
\end{equation}
At an eigenspace $E_\lambda=\ker(D-\lambda I)$, the first-order crossing form is
\begin{equation}
Q_\lambda(\psi,\phi)=\langle \psi,K_\vartheta\phi\rangle_{L^2}.
\end{equation}
If $\vartheta$ is compatible with the residual circle action, this form decomposes over spin-Fourier weights.

For the natural pure one-form perturbations considered here, the first-order compression can vanish on the relevant weight spaces.  The next diagnostic is then the second-order coefficient obtained from the reduced resolvent,
\begin{equation}
h(\psi)
=
-\sum_{\mu\neq\lambda}
\frac{\|P_\mu K_\vartheta\psi\|^2}{\mu-\lambda},
\end{equation}
where $P_\mu$ denotes projection onto the $\mu$-eigenspace of $D$.  This coefficient can distinguish different spin-weight channels, and in small examples its signs track information carried by the second coordinate spin weight.

However, these signs depend on the perturbation chamber.  Varying the perturbation can change the ordering and signs of the second-order channels.  Thus the second-order Hessian residue is not used below as a perturbation-free invariant.  Its role is only explanatory: it shows why the one-variable residue is incomplete and why the second coordinate should be retained directly in the two-variable spin-Fourier residue.

This diagnostic viewpoint is close in spirit to the relation between Maslov index, spectral flow, and Dirac boundary data in the work of Boo\ss-Bavnbek-Furutani, Boo\ss-Wojciechowski, Kirk-Lesch, and Nicolaescu~\cite{BoossFurutani,BoossWojciechowski,KirkLesch,NicolaescuMaslov}.

The eta-theoretic calculation below uses the spin-Fourier data directly, not the chamber-dependent Hessian signs.

\FloatBarrier

\section{Scalar shadows and ordinary $\eta$ values}
\label{sec:stage5-scalar-eta-square}

The results in this section are residue-level separation statements.  The first check shows that ordinary same-level scalar multiplicity does not determine the reduced Spin-Fourier residue.  The second check adds equality of the ordinary non-equivariant $\eta$ shadow.  The final check promotes the same residue-level phenomenon from an isolated example to a square-modulus family.

\subsection{Separation from ordinary scalar multiplicity}
\label{subsec:check-3A-scalar-multiplicity}

Let $\operatorname{ev}_1:R(\operatorname{Spin}(2))\to \mathbb Z$ denote evaluation at $u=1$.  This map forgets the Spin-Fourier weights and remembers only ordinary dimension.  For a fixed level $k$, define the same-level scalar multiplicity datum by
\begin{equation}
m_k(p,q)=\left(\dim E_k^+(p,q),\dim E_k^-(p,q)\right).
\end{equation}
Equivalently,
\begin{equation}
m_k(p,q)=\left(\chi_k^+(p,q;1),\chi_k^-(p,q;1)\right).
\end{equation}

\begin{theorem}[Scalar multiplicity does not determine the reduced Spin-Fourier residue]
There exist equivariant spin lens-space data with the same scalar multiplicity datum at the same Dirac level but with different reduced Spin-Fourier residues.  Moreover, this phenomenon is not explained merely by orientation reversal, coordinate exchange, or their composition.
\end{theorem}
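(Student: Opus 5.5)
The plan is to prove the theorem by one explicit pair of equivariant spin lens-space data, read off from Table~\ref{tab:first-useful-spin-fourier-sample}, and then to exclude the three symmetries by elementary arithmetic in $(\mathbb Z/p\mathbb Z)^\times$. The pair has to be chosen with care. The obvious candidate $p=7$, $q=2$ versus $q=3$ (Proposition~\ref{prop:arithmetic-sensitivity-first-residue}) does \emph{not} work for the ``moreover'' clause. Indeed $2\cdot 3\equiv -1\pmod 7$, so $3\equiv -2^{-1}$, and the two data are related by the composition of orientation reversal and coordinate exchange. I will therefore use $p=11$ with $q=2$ and $q'=4$, both at level $k=3$.

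\textbf{Step 1: the residues.} I will recompute directly from the Spin-Fourier character formula the level-$3$ characters
\begin{equation}
\chi_3^+(11,2;u)=A_1(u),\quad \chi_3^-(11,2;u)=A_5(u),\quad \chi_3^+(11,4;u)=A_1(u),\quad \chi_3^-(11,4;u)=A_7(u).
\end{equation}
For each $r=0,\dots,3$ this means enumerating the odd pairs with $|a_1|+|a_2|=2(4-r)\le 8$ that satisfy the congruence $a_1+qa_2\equiv 0 \pmod{11}$, and then sorting them by the parity of $R(a)-r$. This is a short finite check, and I expect it to reproduce the table entries. For example, for $q=2$ the only surviving points should be $\pm(1,5)$, since $1+10=11$, together with a pair $\pm(a_1,a_2)$ with $|a_1|=5$ in the other branch. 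I expect this verification to be the main bookkeeping obstacle, because the lower shells $r\ge 1$ must also be checked for stray solutions. It remains finite and mechanical.

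\textbf{Step 2: equal multiplicities, different residues.} Evaluating at $u=1$ gives $m_3(11,2)=m_3(11,4)=(2,2)$. On the other hand,
\begin{equation}
\Delta_3(11,2;u)=A_1(u)-A_5(u),\qquad \Delta_3(11,4;u)=A_1(u)-A_7(u).
\end{equation}
These differ in $\widetilde R_{\operatorname{spin}}$ because $A_5$ and $A_7$ are distinct free generators. This is the same argument as in Proposition~\ref{prop:arithmetic-sensitivity-first-residue}.

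\textbf{Step 3: ruling out the symmetries.} Orientation reversal acts as $q\mapsto -q$ (Proposition~\ref{prop:orientation-sign-rule-equivariant}), coordinate exchange acts as $q\mapsto q^{-1}$, and their composition acts as $q\mapsto -q^{-1}$. Modulo $11$ we have $2^{-1}\equiv 6$, so the orbit of $q=2$ under these operations is $\{2,9,6,5\}$. Since $4$ is not in this orbit, the two data are not related by orientation reversal, coordinate exchange, or their composition. In fact $L(11,2)$ and $L(11,4)$ are not even homeomorphic, by the classical classification of lens spaces. I would also remark, as a consistency check, that orientation reversal alone would send $A_1-A_5$ to $A_5-A_1$ rather than to $A_1-A_7$. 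This completes the existence statement together with its ``moreover'' clause.
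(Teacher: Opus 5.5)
Your proposal is correct and matches the paper's proof: the same pair $L(11,2)$, $L(11,4)$ at level $k=3$, the same characters ($A_1$ versus $A_5$, and $A_1$ versus $A_7$), the common scalar datum $(2,2)$, and the same check that $4\notin\{9,6,5\}$ modulo $11$. Your side remark that $3\equiv-2^{-1}\pmod 7$ is also correct, so the $p=7$ pair of Proposition~\ref{prop:arithmetic-sensitivity-first-residue} could not serve for the ``moreover'' clause, which is a good reason to use the $p=11$ pair instead.
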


\begin{proof}
Consider the two equivariant spin lens-space data
\begin{equation}
\mathcal X^{[3]}_{11,2}=\left(L(11,2),\mathfrak s,\rho_1,g_{\text{round}},3\right)
\qquad\text{and}\qquad
\mathcal X^{[3]}_{11,4}=\left(L(11,4),\mathfrak s,\rho_1,g_{\text{round}},3\right).
\end{equation}
At level $k=3$, the affine parity-shell computation gives
\begin{equation}
\chi_3^+(11,2;u)=A_1(u)
\qquad\text{and}\qquad
\chi_3^-(11,2;u)=A_5(u).
\end{equation}
Therefore
\begin{equation}
\Delta_3(11,2;u)=A_1(u)-A_5(u).
\end{equation}
For the second datum, the same computation gives
\begin{equation}
\chi_3^+(11,4;u)=A_1(u)
\qquad\text{and}\qquad
\chi_3^-(11,4;u)=A_7(u).
\end{equation}
Therefore
\begin{equation}
\Delta_3(11,4;u)=A_1(u)-A_7(u).
\end{equation}
Evaluating at $u=1$, one has
\begin{equation}
A_n(1)=2
\end{equation}
for every $n\geq 1$.  Hence both examples have the same same-level scalar multiplicity datum:
\begin{equation}
m_3(11,2)=(2,2)
\qquad\text{and}\qquad
m_3(11,4)=(2,2).
\end{equation}
However,
\begin{equation}
A_1(u)-A_5(u)\neq A_1(u)-A_7(u)
\end{equation}
in the reduced Spin-Fourier target, because $A_5(u)$ and $A_7(u)$ are distinct reduced generators.

It remains to check that the pair is not explained by the elementary transformations already discussed.  Modulo $11$, one has
\begin{equation}
-2\equiv 9,
\qquad
2^{-1}\equiv 6,
\qquad
-2^{-1}\equiv 5.
\end{equation}
The value $4$ is none of $9$, $6$, or $5$.  Therefore $q=4$ is not obtained from $q=2$ by orientation reversal, coordinate exchange, or the composition of those two operations.  This proves the claim.
\end{proof}

This is a same-level scalar statement.  It does not assert equality of the full ordinary Dirac spectra of $L(11,2)$ and $L(11,4)$; it only says that, at this level, the scalar multiplicity agrees while the reduced Spin-Fourier residue differs.

\subsection{Separation from ordinary $\eta$ and same-level scalar multiplicity}
\label{subsec:check-3B-ordinary-eta-blindness}

The scalar-blindness result can be strengthened.  The reduced Spin-Fourier residue can distinguish equivariant spin lens-space data even when the ordinary Dirac $\eta$ invariant agrees and the same-level scalar multiplicity datum also agrees.

Let $\eta_D(L(p,q))$ denote the ordinary Dirac $\eta$ invariant of the three-dimensional spin lens space $L(p,q)$.  To avoid using an unverified normalization convention, we separate the normalization from the trigonometric part of the finite lens-space formula.  Define
\begin{equation}
\mathfrak E(P,q)
=
\sum_{r=1}^{P-1}
\operatorname{csc}\left(\frac{(P+1)r\pi}{P}\right)
\operatorname{csc}\left(\frac{(P+1)qr\pi}{P}\right).
\end{equation}
The standard lens-space $\eta$ formula, equivalently the fixed-point/deck-average form of Donnelly's equivariant $\eta$ formula in this setting, expresses $\eta_D(L(P,q))$ as a normalization factor depending only on $P$, multiplied by $\mathfrak E(P,q)$~\cite{DonnellyGSpaces}.  The equality statements below use only equality of $\mathfrak E(P,q)$ at fixed $P$, and therefore do not require the explicit numerical normalization.

\begin{theorem}[Ordinary eta-shadow blindness of the reduced Spin-Fourier residue]
There exist equivariant spin lens-space data
\begin{equation}
\mathcal X^{[4]}_{25,4}=\left(L(25,4),\mathfrak s,\rho_1,g_{\text{round}},4\right)
\qquad\text{and}\qquad
\mathcal X^{[4]}_{25,9}=\left(L(25,9),\mathfrak s,\rho_1,g_{\text{round}},4\right)
\end{equation}
such that the ordinary Dirac $\eta$ invariants agree and the same-level scalar multiplicity data agree at a Dirac level, but the reduced Spin-Fourier residues differ.
\end{theorem}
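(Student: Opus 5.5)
The plan is to verify the three assertions in turn: equality of the ordinary $\eta$ invariants, equality of the level-$4$ scalar multiplicity data, and difference of the level-$4$ reduced residues. Only the first one needs a real argument; the other two are finite lattice-point counts.

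\emph{Eta equality.} By the convention fixed above, it suffices to prove $\mathfrak E(25,4)=\mathfrak E(25,9)$, because the normalization factor depends only on $P$. First I would simplify $\mathfrak E$.
\begin{itemize}
\item Since $P$ is odd, set $s=(P+1)/2$, the inverse of $2$ modulo $P$. Then $(P+1)r\pi/P=2\pi sr/P$.
\item The function $x\mapsto\csc(2\pi x/P)$ is $P$-periodic on integers, and $r\mapsto sr$ permutes the nonzero residues mod $P$. Reindexing therefore gives
\begin{equation}
\mathfrak E(P,q)=\sum_{r=1}^{P-1}\csc\Bigl(\frac{2\pi r}{P}\Bigr)\csc\Bigl(\frac{2\pi qr}{P}\Bigr).
\end{equation}
\item Apply the identity $\csc\theta=\cot(\theta/2)-\cot\theta$ to both factors. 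Use $2^{-1}\equiv s$ to rewrite the half-angles as full angles $\pi(\cdot)/P$ with modified multipliers.
\item Expand the product. $\mathfrak E(P,q)$ becomes a signed combination of four cotangent-product sums $\sum_r\cot(\pi ar/P)\cot(\pi br/P)$.
\item Each such sum equals $4P\,s(a^{-1}b,P)$, by the classical cotangent formula for the Dedekind sum. For example, the first term is $4P\,s(q,P)$, and the others involve $s(2q,P)$ and $s(q s,P)$, that is, multipliers $q$, $2q$, $2^{-1}q$.
\end{itemize}
This gives a closed expression in Dedekind sums, which I would then evaluate at $q=4$ and $q=9$ modulo $25$.

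The Dedekind sums $s(h,25)$ for the handful of residues involved can be evaluated by the reciprocity law $s(h,k)+s(k,h)=-\tfrac14+\tfrac1{12}(h/k+k/h+1/(hk))$ together with reduction of $h$ mod $k$. One can also use $s(-h,k)=-s(h,k)$ and $s(h^{-1},k)=s(h,k)$ to identify terms across the two cases; note $4\cdot 9=36\equiv 11$ and $9\equiv 4^{-1}\cdot 11$. I expect this bookkeeping, getting the signs and multipliers in the half-angle expansion exactly right, to be the main obstacle. As an independent safeguard, I would also check the equality numerically in the cyclotomic field $\mathbb Q(\zeta_{50})$.

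\emph{Level-$4$ residues.} I would enumerate, for $P=25$, the odd pairs $(a_1,a_2)$ with $|a_1|+|a_2|=2(5-r)$ for $r=0,\dots,4$. Since $|a_j|\le 9$, the congruence $a_1+qa_2\equiv0\pmod{25}$ cuts the list to a few points, and for $q=4$ and $q=9$ the surviving points can be read off directly. Each surviving point is sorted into $\mathcal L^\pm_{4,r}$ by the parity of $R(a)$ relative to $r$, contributing $u^{a_1}$ to $\chi_4^\pm$. I expect to find equal dimensions, i.e.\ $m_4(25,4)=m_4(25,9)$, obtained by evaluating at $u=1$, but distinct Laurent polynomials. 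The difference of the $\Delta_4$ would then be a nonzero combination of distinct generators $A_n$ in $\widetilde R_{\mathrm{spin}}$, exactly as in the proof for $L(11,2)$ versus $L(11,4)$.
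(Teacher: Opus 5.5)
Your proposal is correct. For the level-$4$ data it is the same direct enumeration the paper uses, and it closes quickly. The congruence forces $a_1\equiv a_2\pmod 5$, hence $a_1-a_2\in\{0,\pm10\}$. The mod-$25$ condition then leaves four points for each $q$, all on the outer shell $r=0$. This gives $\Delta_4(25,4;u)=A_3(u)-A_5(u)$ and $\Delta_4(25,9;u)=A_9(u)-A_5(u)$, with $m_4=(2,2)$ in both cases.

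For the $\eta$ equality you take a genuinely different route. The paper's justification is made explicit only later, in the square-family $\eta$ theorem. There it rewrites $\mathfrak E(\ell^2,a\ell-1)$ with signs $(-1)^{ar}$ and splits $r=m\ell+s$. It then uses $\sum_m\cot(x+m\pi/\ell)=\ell\cot(\ell x)$ to cancel every block with $s\neq0$, and evaluates the $s=0$ block as $-\sum_m\csc^2(m\pi/\ell)=-(\ell^2-1)/3$.

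Your reindexing and half-angle expansion give $\mathfrak E(P,q)=4P\bigl[2s(q,P)-s(2q,P)-s(2^{-1}q,P)\bigr]$ for every odd $P$, and the bookkeeping you were worried about does close:
\begin{itemize}
\item $s(4,25)=s(9,25)=4/25$ and $s(2,25)=4/5$;
\item $s(8,25)=-2/5=-s(17,25)$;
\item $s(18,25)=-s(7,25)=0$, because $7^2\equiv-1\pmod{25}$ forces $s(7,25)=-s(7,25)$.
\end{itemize}
Both sides therefore equal $100\cdot(-2/25)=-8$, matching the paper's value $-(\ell^2-1)/3$ at $\ell=5$.

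Your route buys a closed Dedekind-sum formula valid for arbitrary $(P,q)$, and it reduces the comparison to a finite reciprocity computation. The cost is that the equality appears as a numerical coincidence at $P=25$, and extending it to the whole family would need further Dedekind-sum identities. The paper's block-cancellation argument explains the equality structurally and handles all odd $\ell$ at once. It also fixes the pairing and normalization conventions that are reused later in the residual-circle second-derivative calculation.
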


\begin{proof}
The trigonometric $\eta$ sums agree:
\begin{equation}
\mathfrak E(25,4)=\mathfrak E(25,9).
\end{equation}
Since the omitted eta-normalization factor depends only on $P=25$, this implies
\begin{equation}
\eta_D(L(25,4))=\eta_D(L(25,9)).
\end{equation}
We do not use the common numerical value here.

Now consider the Dirac level $k=4$.  The same-level scalar multiplicity data are
\begin{equation}
m_4(25,4)=(2,2)
\qquad\text{and}\qquad
m_4(25,9)=(2,2).
\end{equation}
Thus the ordinary positive and negative scalar multiplicities agree at this level.  However, the reduced Spin-Fourier residues are different.  The computation gives
\begin{equation}
\Delta_4(25,4;u)=A_3(u)-A_5(u).
\end{equation}
For the second datum, the computation gives
\begin{equation}
\Delta_4(25,9;u)=-A_5(u)+A_9(u).
\end{equation}
Since $A_3(u)$ and $A_9(u)$ are distinct generators in the reduced Spin-Fourier target, one has
\begin{equation}
A_3(u)-A_5(u)\neq -A_5(u)+A_9(u).
\end{equation}
Therefore
\begin{equation}
\Delta_4(25,4;u)\neq \Delta_4(25,9;u).
\end{equation}
This proves that the reduced Spin-Fourier residue distinguishes the two equivariant spin lens-space data even though the ordinary $\eta$ invariant agrees and the scalar multiplicity data at the chosen level agree.
\end{proof}

\begin{remark}[Nontriviality of the $p=25$ pair]
The pair $q=4$ and $q=9$ modulo $25$ is not explained by the elementary transformations used above.  Indeed,
\begin{equation}
-4\equiv 21,
\qquad
4^{-1}\equiv 19,
\qquad
-4^{-1}\equiv 6
\pmod{25}.
\end{equation}
The value $9$ is none of $21$, $19$, or $6$.  Hence this is not merely orientation reversal, coordinate exchange, or their composition within the fixed $z_1$-rotation convention.
\end{remark}

The theorem proves a specific computable statement: ordinary $\eta$ and same-level scalar multiplicity do not determine the reduced Spin-Fourier residue attached to the fixed equivariant spin datum.

\subsection{An infinite square-modulus family}
\label{subsec:check-4-square-family}

The previous theorem is an isolated ordinary-eta-blind example.  An infinite square-modulus family explains the pattern behind the isolated example.  Let $\ell\geq 5$ be odd, and let
\begin{equation}
p=\ell^2,
\qquad
q_1=\ell-1,
\qquad
q_2=2\ell-1,
\qquad
k=\ell-1.
\end{equation}
For $\ell=5$, this recovers
\begin{equation}
p=25,
\qquad
q_1=4,
\qquad
q_2=9,
\qquad
k=4.
\end{equation}

\begin{proposition}[Spin-Fourier residues in the square family]
Let $\ell\geq 5$ be odd.  For
\begin{equation}
p=\ell^2,
\qquad
q_1=\ell-1,
\qquad
q_2=2\ell-1,
\qquad
k=\ell-1,
\end{equation}
one has
\begin{equation}
m_k(p,q_1)=m_k(p,q_2)=(2,2).
\end{equation}
However,
\begin{equation}
\Delta_k(p,q_1;u)=A_{\ell-2}(u)-A_{\ell}(u),
\end{equation}
whereas
\begin{equation}
\Delta_k(p,q_2;u)=A_{2\ell-1}(u)-A_{\ell}(u).
\end{equation}
In particular,
\begin{equation}
\Delta_k(p,q_1;u)\neq \Delta_k(p,q_2;u).
\end{equation}
\end{proposition}

\begin{proof}
At level $k=\ell-1$, the contributing affine spin-lattice points satisfy
\begin{equation}
|a|+|b|\leq 2\ell
\end{equation}
and
\begin{equation}
a+qb\equiv0\pmod{\ell^2}.
\end{equation}
First take $q=q_1=\ell-1$.  The congruence becomes
\begin{equation}
a+(\ell-1)b\equiv0\pmod{\ell^2}.
\end{equation}
Modulo $\ell$, this implies
\begin{equation}
a-b\equiv0\pmod{\ell}.
\end{equation}
Since $a$ and $b$ are odd, the difference $a-b$ is even.  Under the bound $|a|+|b|\leq2\ell$, the only possible cases are
\begin{equation}
a=b,
\qquad
a=b+2\ell,
\qquad
a=b-2\ell.
\end{equation}
If $a=b$, then
\begin{equation}
a+(\ell-1)b=\ell a.
\end{equation}
Divisibility by $\ell^2$ forces $a$ to be a multiple of $\ell$.  Since $|a|\leq\ell$ and $a$ is odd, this gives
\begin{equation}
(a,b)=(\ell,\ell)
\qquad\text{or}\qquad
(a,b)=(-\ell,-\ell).
\end{equation}
If $a=b+2\ell$, then
\begin{equation}
a+(\ell-1)b=\ell(b+2).
\end{equation}
Divisibility by $\ell^2$ gives
\begin{equation}
b+2\equiv0\pmod{\ell}.
\end{equation}
The bound gives
\begin{equation}
b=-(\ell+2)
\qquad\text{and}\qquad
a=\ell-2.
\end{equation}
Thus one obtains
\begin{equation}
(a,b)=(\ell-2,-\ell-2).
\end{equation}
The case $a=b-2\ell$ similarly gives
\begin{equation}
(a,b)=(-\ell+2,\ell+2).
\end{equation}
Therefore the contributing first Spin-Fourier weights for $q_1$ are
\begin{equation}
\pm\ell
\qquad\text{and}\qquad
\pm(\ell-2).
\end{equation}
The points $(\ell,\ell)$ and $(-\ell,-\ell)$ have even sign-counting parity and contribute to the negative branch at this level.  The points $(\ell-2,-\ell-2)$ and $(-\ell+2,\ell+2)$ have odd sign-counting parity and contribute to the positive branch.  Hence
\begin{equation}
\Delta_k(p,q_1;u)=A_{\ell-2}(u)-A_{\ell}(u).
\end{equation}

Now take $q=q_2=2\ell-1$.  The congruence is
\begin{equation}
a+(2\ell-1)b\equiv0\pmod{\ell^2}.
\end{equation}
Again, modulo $\ell$, this implies
\begin{equation}
a-b\equiv0\pmod{\ell}.
\end{equation}
The same three cases occur.  If $a=b$, then
\begin{equation}
a+(2\ell-1)b=2\ell a.
\end{equation}
Since $\ell$ is odd, divisibility by $\ell^2$ forces $a$ to be a multiple of $\ell$.  Hence
\begin{equation}
(a,b)=(\ell,\ell)
\qquad\text{or}\qquad
(a,b)=(-\ell,-\ell).
\end{equation}
If $a=b+2\ell$, then
\begin{equation}
a+(2\ell-1)b=2\ell(b+1).
\end{equation}
Divisibility by $\ell^2$ gives
\begin{equation}
b+1\equiv0\pmod{\ell}.
\end{equation}
The bound gives
\begin{equation}
b=-1
\qquad\text{and}\qquad
a=2\ell-1.
\end{equation}
Thus one obtains
\begin{equation}
(a,b)=(2\ell-1,-1).
\end{equation}
The case $a=b-2\ell$ similarly gives
\begin{equation}
(a,b)=(-(2\ell-1),1).
\end{equation}
Therefore the contributing first Spin-Fourier weights for $q_2$ are
\begin{equation}
\pm\ell
\qquad\text{and}\qquad
\pm(2\ell-1).
\end{equation}
The $\pm\ell$ pair contributes to the negative branch, while the $\pm(2\ell-1)$ pair contributes to the positive branch.  Hence
\begin{equation}
\Delta_k(p,q_2;u)=A_{2\ell-1}(u)-A_{\ell}(u).
\end{equation}
Both examples have one positive reduced weight pair and one negative reduced weight pair, so their scalar data are both $(2,2)$.  Since $A_{\ell-2}(u)$ and $A_{2\ell-1}(u)$ are distinct reduced Spin-Fourier generators, the two residues are different.
\end{proof}

\begin{lemma}[A cotangent sum identity]
\label{lem:cotangent-sum-identity}
For every integer $\ell\geq2$ and every $x$ away from the poles, one has
\begin{equation}
\sum_{m=0}^{\ell-1}\cot\left(x+\frac{m\pi}{\ell}\right)
=
\ell\cot(\ell x).
\end{equation}
\end{lemma}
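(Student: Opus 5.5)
The plan is to use the product formula
\begin{equation}
\sin(\ell y)=2^{\ell-1}\prod_{m=0}^{\ell-1}\sin\left(y+\frac{m\pi}{\ell}\right),
\end{equation}
valid for all real $y$ and every integer $\ell\geq 1$, and then take a logarithmic derivative. First I would establish the product formula. Write $\sin\theta=(e^{i\theta}-e^{-i\theta})/(2i)$ and factor the polynomial $z^{2\ell}-1=\prod_{m=0}^{\ell-1}(z^2-e^{2\pi i m/\ell})$ with $z=e^{iy}$. This gives
\begin{equation}
\sin(\ell y)=C_\ell\prod_{m=0}^{\ell-1}\sin\left(y+\frac{m\pi}{\ell}\right)
\end{equation}
for some constant $C_\ell$, since both sides have the same zeros with the same multiplicities and the same period behaviour. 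Alternatively, both sides are trigonometric polynomials of degree $\ell$ vanishing exactly at $y\in\frac{\pi}{\ell}\mathbb Z$, so their ratio is a bounded entire periodic function with no poles, hence constant. The value of the constant is irrelevant here, because only the logarithmic derivative will be used; I only need $C_\ell\neq 0$, which holds because the left side is not identically zero.

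Next, for $x$ away from the poles (that is, $x\notin\frac{\pi}{\ell}\mathbb Z$), every factor $\sin(x+m\pi/\ell)$ is nonzero, and so is $\sin(\ell x)$. I would therefore differentiate $\log|\sin(\ell y)|=\log|C_\ell|+\sum_m\log|\sin(y+m\pi/\ell)|$ at $y=x$. The left side gives $\ell\cot(\ell x)$ and the right side gives $\sum_{m=0}^{\ell-1}\cot(x+m\pi/\ell)$, which is exactly the claimed identity.

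The only step needing care is justifying the product formula up to a constant. I would do this most cleanly via the factorization of $z^{2\ell}-1$: substituting $z=e^{iy}$ turns each factor $e^{2iy}-e^{2\pi i m/\ell}$ into $2i\,e^{i(y+m\pi/\ell)}\sin(y+m\pi/\ell)$ up to sign, and $e^{2i\ell y}-1=2i\,e^{i\ell y}\sin(\ell y)$. The exponential prefactors then match, because $\sum_m m\pi/\ell=(\ell-1)\pi/2$ is a constant phase. An alternative route avoids products entirely. The difference $f(x)=\sum_m\cot(x+m\pi/\ell)-\ell\cot(\ell x)$ is $\pi/\ell$-periodic, and its simple poles at $\frac{\pi}{\ell}\mathbb Z$ cancel because the residues are $1$ and $\ell\cdot\frac1\ell$. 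The function $f$ is also odd, since reindexing $m\mapsto \ell-m$ gives $f(-x)=-f(x)$, and it is bounded as $\operatorname{Im}x\to\pm\infty$, where it tends to $\mp i\ell\pm i\ell=0$. By Liouville, $f$ is therefore a constant, and oddness forces that constant to be $0$. I would present the first route and mention the second as a check.
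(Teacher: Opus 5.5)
Your argument is correct and matches the paper's proof: the product identity $\sin(\ell x)=K_\ell\prod_{m=0}^{\ell-1}\sin(x+m\pi/\ell)$ with $K_\ell\neq0$, followed by a logarithmic derivative. The only differences are that the paper takes the product identity as known while you derive it and add a Liouville check, and that your derivation has one small slip: $e^{2iy}-e^{2\pi i m/\ell}=2i\,e^{i(y+m\pi/\ell)}\sin(y-m\pi/\ell)$, so the factors become $\sin(y+m\pi/\ell)$ only after reindexing $m\mapsto \ell-m$, which affects nothing but the constant.
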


\begin{proof}
Use the product identity
\begin{equation}
\sin(\ell x)=K_\ell\prod_{m=0}^{\ell-1}\sin\left(x+\frac{m\pi}{\ell}\right)
\end{equation}
for a nonzero constant $K_\ell$.  Taking the logarithmic derivative gives the stated identity.
\end{proof}

\begin{theorem}[Ordinary $\eta$ equality in the square family]
Let $\ell\geq 5$ be odd.  Then
\begin{equation}
\eta_D(L(\ell^2,\ell-1))=\eta_D(L(\ell^2,2\ell-1)).
\end{equation}
More precisely, the unnormalised trigonometric $\eta$ sums satisfy
\begin{equation}
\mathfrak E(\ell^2,\ell-1)=\mathfrak E(\ell^2,2\ell-1).
\end{equation}
The common numerical value of $\eta_D$ depends on the standard normalization factor in the Dirac $\eta$ formula and is not used in this argument.
\end{theorem}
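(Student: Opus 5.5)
The plan is to show directly that $\mathfrak E(\ell^2,\ell-1)=\mathfrak E(\ell^2,2\ell-1)$. By the discussion preceding the theorem, this gives equality of $\eta_D$, because the normalization factor depends only on $P=\ell^2$. Write $P=\ell^2$ and split the summation index as $r=a+\ell m$ with $0\le a\le \ell-1$ and $0\le m\le \ell-1$, discarding $(a,m)=(0,0)$. The terms then fall into two groups: the \emph{block} $a=0$, that is $r=\ell m$, and the generic terms $a\neq0$. The aim is to show that for each fixed $a\neq 0$ the inner sum over $m$ vanishes for both $q_1$ and $q_2$, and that the $a=0$ blocks coincide.

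\textbf{Step 1: generic terms.} Fix $a\neq0$ and put $A_m=(P+1)r\pi/P$ and $B_m=(P+1)qr\pi/P$. Modulo $\pi$ we have $A_m\equiv x+m\pi/\ell$ with $x=a\pi/\ell^2$, since $(P+1)\ell m\pi/\ell^2=\ell m\pi+m\pi/\ell$. For $q_1=\ell-1$ one gets $B_m\equiv y-m\pi/\ell$ modulo $\pi$, with $y=a\pi/\ell-a\pi/\ell^2$. For $q_2=2\ell-1$ one gets $B_m\equiv y'-m\pi/\ell$ with $y'=2a\pi/\ell-a\pi/\ell^2$. In both cases $A_m+B_m$ is independent of $m$ modulo $\pi$. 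This suggests using the identity
\begin{equation}
\csc A\,\csc B=\frac{\cot A+\cot B}{\sin(A+B)} .
\end{equation}
Summing over $m$ and applying Lemma~\ref{lem:cotangent-sum-identity} to both cotangent sums (the second after $m\mapsto -m$ modulo $\ell$), the inner sum becomes $\pm\ell\,(\cot(\ell x)+\cot(\ell y))/\sin(x+y)$, with $y'$ in place of $y$ for $q_2$. Here $\ell x=a\pi/\ell$, while $\ell y\equiv -a\pi/\ell$ and $\ell y'\equiv -a\pi/\ell$ modulo $\pi$. Hence $\cot(\ell x)+\cot(\ell y)=0$, and the same holds with $y'$. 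Both generic contributions therefore vanish. We also need $\sin(x+y)\neq 0$ and $\sin(x+y')\neq0$, which holds because $x+y=a\pi/\ell$ and $x+y'=2a\pi/\ell$ with $\ell$ odd and $\ell\nmid a$.

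\textbf{Step 2: the block $a=0$.} For $r=\ell m$ with $1\le m\le\ell-1$, the angles are $(P+1)m\pi/\ell$ and $(P+1)qm\pi/\ell$. Both $q_1=\ell-1$ and $q_2=2\ell-1$ are $\equiv -1\pmod \ell$. So $qm\pi/\ell$ agrees with $-m\pi/\ell$ up to an integer multiple of $\pi$: this multiple is $m$ for $q_1$ and $2m$ for $q_2$. The block terms therefore agree up to the signs $(-1)^m$ and $(-1)^{2m}$, and after comparing the two blocks directly they should coincide.

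\textbf{Main obstacle.} The main difficulty is sign bookkeeping. Because $\csc$ is only $\pi$-antiperiodic, every reduction modulo $\pi$ above carries a factor $(-1)^{\text{integer}}$. The factor $P+1$ contributes $(-1)^r$ in the first cosecant and $(-1)^{qr}$ in the second, and $q_1$ is even while $q_2$ is odd. This has two consequences. First, the $m$-dependent signs in Step~1 must be checked to be constant in $m$, or absorbed by a shift of $x$ or $y$ by $\pi$, before the Lemma can be applied. Second, the sign parities in Step~2 must actually match; if they do not, one can pair $m$ with $\ell-m$ to cancel or match terms. I would first fix all signs by rewriting $(-1)^{n}\csc\theta=\csc(\theta+n\pi)$ and keeping exact angles rather than residues. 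Pole avoidance in the Lemma also needs checking: $\ell x$ and $\ell y$ are never multiples of $\pi$ when $a\neq0$.
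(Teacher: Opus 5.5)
Your strategy is the paper's. You split $r$ into a residue modulo $\ell$ and a block index. You kill each block with nonzero residue using $\csc A\csc B=(\cot A+\cot B)/\sin(A+B)$ together with Lemma~\ref{lem:cotangent-sum-identity}. You then treat the block $r=\ell m$ directly; the paper also evaluates it as $-(\ell^2-1)/3$, which you do not need.

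The gap is the sign bookkeeping you postpone, and it is the decisive step. Step~2 as written gets it wrong: you compare only the signs $(-1)^m$ and $(-1)^{2m}$ coming from reducing $qm\pi/\ell$, which leaves the two blocks differing by $(-1)^m$. Your fallback of pairing $m$ with $\ell-m$ cannot repair such a mismatch. For odd $\ell$ that pairing makes $\sum_m(-1)^m\csc^2(m\pi/\ell)$ vanish, while the constant-sign block equals $-(\ell^2-1)/3$, so the two sums $\mathfrak E$ would then genuinely differ. What makes the theorem true is the factor $(-1)^{qr}$ produced by $P+1$ in the second cosecant. You list this factor in your last paragraph but never insert it.

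The clean fix, as in the paper, is to extract both signs before splitting:
\begin{equation*}
\csc\frac{(P+1)r\pi}{P}\,\csc\frac{(P+1)qr\pi}{P}=(-1)^{(q+1)r}\csc\frac{r\pi}{P}\,\csc\frac{qr\pi}{P}.
\end{equation*}
Here $(-1)^{(q_1+1)r}=(-1)^{\ell r}=(-1)^r$ and $(-1)^{(q_2+1)r}=(-1)^{2\ell r}=1$. With $r=a+\ell m$ you have $r\pi/P=x+m\pi/\ell$, $q_1r\pi/P=m\pi+(y-m\pi/\ell)$ and $q_2r\pi/P=2m\pi+(y'-m\pi/\ell)$. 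Using $(-1)^{a+\ell m}=(-1)^{a+m}$, the summands become:
\begin{itemize}
\item for $q_1$: $(-1)^{a}\csc(x+m\pi/\ell)\csc(y-m\pi/\ell)$;
\item for $q_2$: $\csc(x+m\pi/\ell)\csc(y'-m\pi/\ell)$.
\end{itemize}
The sign is constant in $m$ in both cases, so Step~1 goes through exactly as you wrote it. At $a=0$ (where $x=y=y'=0$) both summands equal $-\csc^2(m\pi/\ell)$ term by term, which is the equality Step~2 needs. With these two checks inserted, your argument is complete.
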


\begin{proof}
Let $P=\ell^2$ and $q_a=a\ell-1$, where $a=1$ or $a=2$.  It is enough to compare the unnormalised trigonometric $\eta$ sums
\begin{equation}
\mathfrak E(P,q_a)
=
\sum_{r=1}^{P-1}
\operatorname{csc}\left(\frac{(P+1)r\pi}{P}\right)
\operatorname{csc}\left(\frac{(P+1)q_a r\pi}{P}\right).
\end{equation}
Since $P=\ell^2$, one has
\begin{equation}
\sin\left(\frac{(P+1)r\pi}{P}\right)=(-1)^r\sin\left(\frac{r\pi}{P}\right).
\end{equation}
Similarly,
\begin{equation}
\sin\left(\frac{(P+1)q_a r\pi}{P}\right)=(-1)^{q_a r}\sin\left(\frac{q_a r\pi}{P}\right).
\end{equation}
Because $q_a+1=a\ell$ and $\ell$ is odd, the unnormalised trigonometric sum becomes
\begin{equation}
\mathcal S_a
=
\sum_{r=1}^{\ell^2-1}
\frac{(-1)^{ar}}{
\sin\left(\frac{r\pi}{\ell^2}\right)
\sin\left(\frac{(a\ell-1)r\pi}{\ell^2}\right)}.
\end{equation}
It remains to compute the auxiliary sum $\mathcal S_a$.

Write
\begin{equation}
r=m\ell+s,
\qquad
0\leq m\leq \ell-1,
\qquad
0\leq s\leq \ell-1.
\end{equation}
The pair $(m,s)=(0,0)$ is omitted.  First assume $s\neq 0$.  Introduce $\alpha=\pi s/\ell^2$, $\beta=\pi/\ell$, and $Y=\pi as/\ell$.
For the fixed value of $s$, the corresponding part of $\mathcal S_a$ is
\begin{equation}
(-1)^{as}
\sum_{m=0}^{\ell-1}
\frac{1}{\sin(\alpha+m\beta)\sin(Y-\alpha-m\beta)}.
\end{equation}
Since $a=1,2$, $1\leq s\leq\ell-1$, and $\ell$ is odd, one has
\begin{equation}
\sin Y\neq 0.
\end{equation}
Using
\begin{equation}
\frac{1}{\sin x\sin(Y-x)}
=
\frac{\cot x+\cot(Y-x)}{\sin Y},
\end{equation}
the fixed-$s$ contribution is proportional to
\begin{equation}
\sum_{m=0}^{\ell-1}\cot(\alpha+m\beta)
+
\sum_{m=0}^{\ell-1}\cot(Y-\alpha-m\beta).
\end{equation}
By Lemma~\ref{lem:cotangent-sum-identity}, the first sum is
\begin{equation}
\ell\cot(\ell\alpha)=\ell\cot\left(\frac{\pi s}{\ell}\right).
\end{equation}
The second sum is
\begin{equation}
\ell\cot(\ell(Y-\alpha))
=
\ell\cot\left(as\pi-\frac{\pi s}{\ell}\right)
=
-\ell\cot\left(\frac{\pi s}{\ell}\right).
\end{equation}
The two sums cancel.  Therefore every contribution with $s\neq 0$ vanishes.

It remains to compute the terms with $s=0$.  Then $r=m\ell$, where $1\leq m\leq\ell-1$.  We have
\begin{equation}
\sin\left(\frac{m\ell\pi}{\ell^2}\right)=\sin\left(\frac{m\pi}{\ell}\right).
\end{equation}
Also,
\begin{equation}
\sin\left(\frac{(a\ell-1)m\ell\pi}{\ell^2}\right)
=
\sin\left(am\pi-\frac{m\pi}{\ell}\right)
=
(-1)^{am+1}\sin\left(\frac{m\pi}{\ell}\right).
\end{equation}
Since
\begin{equation}
(-1)^{a m\ell}=(-1)^{am},
\end{equation}
the $s=0$ contribution is
\begin{equation}
\mathcal S_a
=
-\sum_{m=1}^{\ell-1}\operatorname{csc}^2\left(\frac{m\pi}{\ell}\right).
\end{equation}
Using the standard identity
\begin{equation}
\sum_{m=1}^{\ell-1}\operatorname{csc}^2\left(\frac{m\pi}{\ell}\right)=\frac{\ell^2-1}{3},
\end{equation}
we obtain
\begin{equation}
\mathcal S_a=-\frac{\ell^2-1}{3}.
\end{equation}
Thus the unnormalised trigonometric $\eta$ sum satisfies
\begin{equation}
\mathfrak E(\ell^2,a\ell-1)=-\frac{\ell^2-1}{3}.
\end{equation}
This value is independent of $a$ for $a=1,2$.  Since the standard eta-normalization factor depends only on $P=\ell^2$, the ordinary $\eta$ invariants are equal:
\begin{equation}
\eta_D(L(\ell^2,\ell-1))=\eta_D(L(\ell^2,2\ell-1)).
\end{equation}
\end{proof}

\begin{theorem}[Infinite ordinary-eta-blind Spin-Fourier separation]
For every odd $\ell\geq 5$, let
\begin{equation}
p=\ell^2,
\qquad
q_1=\ell-1,
\qquad
q_2=2\ell-1,
\qquad
k=\ell-1.
\end{equation}
Then
\begin{equation}
\eta_D(L(p,q_1))=\eta_D(L(p,q_2)),
\end{equation}
and
\begin{equation}
m_k(p,q_1)=m_k(p,q_2)=(2,2),
\end{equation}
but
\begin{equation}
\Delta_k(p,q_1;u)\neq \Delta_k(p,q_2;u)
\end{equation}
in the reduced Spin-Fourier target.  Thus the reduced Spin-Fourier residue gives infinitely many same-level separations invisible to the ordinary $\eta$ value and ordinary scalar multiplicity.
\end{theorem}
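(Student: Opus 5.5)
This theorem is an assembly of results already proved in this section, so I will cite them in turn rather than compute anything new. Fix an odd $\ell\geq5$ and set $p=\ell^2$, $q_1=\ell-1$, $q_2=2\ell-1$, $k=\ell-1$.

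\emph{Ordinary $\eta$ equality.} I take this directly from the theorem on ordinary $\eta$ equality in the square family. There the unnormalised trigonometric sum was evaluated as $\mathfrak E(\ell^2,a\ell-1)=-(\ell^2-1)/3$ for both $a=1$ and $a=2$. The standard normalization factor depends only on $P=\ell^2$, so this gives $\eta_D(L(p,q_1))=\eta_D(L(p,q_2))$. The statement only asserts equality, so I do not need the explicit normalization constant.

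\emph{Multiplicities and residues.} Both the multiplicity equality $m_k(p,q_1)=m_k(p,q_2)=(2,2)$ and the explicit residues come from the proposition on Spin-Fourier residues in the square family:
\begin{equation}
\Delta_k(p,q_1;u)=A_{\ell-2}(u)-A_\ell(u),
\qquad
\Delta_k(p,q_2;u)=A_{2\ell-1}(u)-A_\ell(u).
\end{equation}
The two residues differ by
\begin{equation}
\Delta_k(p,q_1;u)-\Delta_k(p,q_2;u)=A_{\ell-2}(u)-A_{2\ell-1}(u).
\end{equation}
The modes $A_n$ with $n\geq1$ are distinct free generators of $\widetilde R_{\operatorname{spin}}$. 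Since $\ell\geq5$, the indices satisfy $1\leq\ell-2<\ell<2\ell-1$, so they are pairwise distinct. In particular $\ell-2\neq2\ell-1$, which means this difference is a nonzero element of the reduced target.

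\emph{Main obstacle.} The only delicate point is inside the cited proposition: the lattice-point enumeration has to be complete. That enumeration bounds the contributing points by $|a_1|+|a_2|\leq2\ell$, that is, by the outer shell together with all lower shells $r\leq k$. I would re-check two things there. First, that the three cases $a_1-a_2\in\{0,\pm2\ell\}$ exhaust the solutions. This holds because $a_1-a_2$ is even, divisible by $\ell$, and bounded by $2\ell$ in absolute value. Second, that each point's shell index $r$ and sign count $R(a)$ put it in the branch claimed. For example, $(\ell,\ell)$ lies on shell $\|a\|_1=\ell=k+1$, so $r=0$, and $R=0$ is even, giving the negative branch. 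The point $(\ell-2,-\ell-2)$ also has $\|a\|_1=\ell$, so $r=0$, and $R=1$ is odd, giving the positive branch. With that verified, the conclusion follows, and since $\ell$ ranges over all odd integers $\geq5$ this yields infinitely many such separations.
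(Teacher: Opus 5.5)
Your proposal is correct and follows the paper's proof: the paper also just cites the square-family ordinary-$\eta$ equality theorem and the square-family Spin-Fourier proposition, then observes that $A_{\ell-2}(u)$ and $A_{2\ell-1}(u)$ are distinct reduced generators. Your extra re-check is consistent with the proposition's proof and does not change the route. It covers two points: that $a_1-a_2\in\{0,\pm2\ell\}$, and that every contributing point lies on the outer shell $r=0$ with the stated parities.
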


\begin{proof}
The ordinary $\eta$ equality follows from the previous theorem.  The scalar-multiplicity equality and the two explicit formulas for the reduced Spin-Fourier residues follow from the square-family Spin-Fourier proposition.  Since $A_{\ell-2}(u)$ and $A_{2\ell-1}(u)$ are distinct reduced generators for $\ell\geq 5$, the two residues are different.
\end{proof}

\begin{remark}[Nontriviality of the square-family pairs]
The pair $q_1=\ell-1$ and $q_2=2\ell-1$ is not an elementary orientation-reversal or coordinate-exchange pair.  Indeed,
\begin{equation}
-q_1\equiv \ell^2-\ell+1\pmod{\ell^2},
\end{equation}
while
\begin{equation}
q_1^{-1}\equiv -\ell-1\pmod{\ell^2}
\qquad\text{and}\qquad
-q_1^{-1}\equiv \ell+1\pmod{\ell^2}.
\end{equation}
For $\ell\geq 5$, none of these is equal to $2\ell-1$ modulo $\ell^2$.
\end{remark}

The square-family result upgrades the $p=25$ ordinary-$\eta$ example to an infinite square-modulus family.  It remains a reduced Spin-Fourier residue statement, but it is no longer an isolated finite computation.

\section{\texorpdfstring{$T^2$-equivariant $\eta$ and finite two-variable shells}{T2-equivariant $\eta$ and finite two-variable shells}}
\label{sec:T2-equivariant-eta}

The two-variable residue is the finite-shell coefficient of the $T^2$-equivariant $\eta$ character of the spin Dirac operator.  The invariant-level result uses the residual-circle $\eta$/rho germ obtained from these shell coefficients, rather than a single isolated shell.

In the square-family separation proved below, the residual circle used is the one-parameter subgroup which rotates the first coordinate. Thus, the resulting $\eta$ germ is a one-variable specialization of the full $T^2$-equivariant character.  The two-variable residue gives the larger coordinate-torus bookkeeping, but the nonvanishing theorem below uses this particular residual-circle specialization.

Let $g=(u,v)\in T^2$.  For $\operatorname{Re}(s)$ sufficiently large, define the equivariant $\eta$ function by
\begin{equation}
\eta_g(D_{p,q},s)
=
\sum_{\lambda\neq 0}
\operatorname{sign}(\lambda)
|\lambda|^{-s}
\operatorname{Tr}\left(g\mid E_\lambda\right),
\end{equation}
where $E_\lambda$ denotes the $\lambda$-eigenspace of the spin Dirac operator on $L(p,q)$.  Since the round Dirac eigenvalues are $\lambda_k^\pm=\pm\left(k+\frac32\right)$, inserting the two-variable characters gives
\begin{equation}
\eta^{T^2}_{p,q}(s;u,v)
:=
\eta_g(D_{p,q},s)
=
\sum_{k=0}^{\infty}
\left(k+\frac32\right)^{-s}
\widehat\Delta_k(p,q;u,v).
\end{equation}
Thus $\widehat\Delta_k(p,q;u,v)$ is the $k$-th eta-shell coefficient.  The one-variable coefficient is recovered by the specialization $\widehat\Delta_k(p,q;u,1)=\Delta_k(p,q;u)$.

\subsection{First-shell $\eta$ separation}

The following elementary observation turns finite residue computations into eta-character separation statements.

\begin{theorem}[First differing shell separates $\eta$ characters]
\label{thm:first-differing-shell-T2}
Let $\mathcal X$ and $\mathcal X'$ be two round coordinate-equivariant spin lens-space data.  Suppose that
\begin{equation}
\widehat\Delta_j(\mathcal X;u,v)
=
\widehat\Delta_j(\mathcal X';u,v)
\end{equation}
for all $j<K$, but
\begin{equation}
\widehat\Delta_K(\mathcal X;u,v)
\neq
\widehat\Delta_K(\mathcal X';u,v).
\end{equation}
Then the $T^2$-equivariant $\eta$ characters are different:
\begin{equation}
\eta^{T^2}_{\mathcal X}(s;u,v)
\neq
\eta^{T^2}_{\mathcal X'}(s;u,v).
\end{equation}
Moreover, for $\operatorname{Re}(s)\to+\infty$, their difference has leading term
\begin{equation}
\left(K+\frac32\right)^{-s}
\left(
\widehat\Delta_K(\mathcal X;u,v)
-
\widehat\Delta_K(\mathcal X';u,v)
\right).
\end{equation}
\end{theorem}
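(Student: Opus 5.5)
The plan is to write the difference of the two $\eta$ characters as a single Dirichlet-type series with Laurent-polynomial coefficients, and then isolate its first nonzero coefficient. For $\operatorname{Re}(s)$ large, the series
\begin{equation}
\eta^{T^2}_{\mathcal X}(s;u,v)-\eta^{T^2}_{\mathcal X'}(s;u,v)
=
\sum_{k=0}^{\infty}\left(k+\frac32\right)^{-s}
\left(\widehat\Delta_k(\mathcal X;u,v)-\widehat\Delta_k(\mathcal X';u,v)\right)
\end{equation}
converges absolutely. Convergence follows because the number of lattice points contributing at level $k$ is at most the number of odd pairs with $|a_1|+|a_2|\leq 2(k+1)$, hence $O(k^2)$. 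Moreover, for $(u,v)\in T^2$ every monomial $u^{a_1}v^{a_2}$ has modulus one. By hypothesis, all terms with $k<K$ vanish.

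Next, I will factor out $(K+\frac32)^{-s}$. This gives
\begin{equation}
\left(K+\tfrac32\right)^{s}\bigl(\eta^{T^2}_{\mathcal X}-\eta^{T^2}_{\mathcal X'}\bigr)
=
D_K(u,v)+\sum_{k>K}\left(\frac{K+\frac32}{k+\frac32}\right)^{s}D_k(u,v),
\end{equation}
where $D_k=\widehat\Delta_k(\mathcal X)-\widehat\Delta_k(\mathcal X')$. Each ratio in the tail is at most $\bigl((K+\tfrac32)/(K+\tfrac52)\bigr)<1$. The coefficients satisfy $|D_k(u,v)|\leq C(k+1)^2$ uniformly on $T^2$. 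Dominated convergence, or an elementary geometric-type bound, then shows that the tail tends to $0$ uniformly in $(u,v)\in T^2$ as $\operatorname{Re}(s)\to+\infty$. This is the leading-term assertion.

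Finally, I will deduce inequality of the characters. Since $D_K$ is a nonzero Laurent polynomial, it is a nonzero trigonometric polynomial on $T^2$, so there is a point $(u_0,v_0)\in T^2$ with $D_K(u_0,v_0)\neq 0$. At that point the rescaled difference tends to $D_K(u_0,v_0)\neq0$, so the difference of $\eta$ functions is nonzero for $\operatorname{Re}(s)$ large. Alternatively, one can argue by uniqueness of coefficients of generalized Dirichlet series with distinct exponents $k+\frac32$, applied coefficientwise in the monomials $u^mv^n$.

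The only real obstacle is the uniform tail estimate, which amounts to checking polynomial growth of the shell coefficients. Here I will use the definition of $\widehat\chi_k^\pm$ as a sum over $r\leq k$ of shells $\|a\|_1=k-r+1$. This sum counts each lattice point with $\|a\|_1\leq k+1$ at most once, so $|D_k|\le 2\cdot\#\{a:\|a\|_1\le k+1\}=O(k^2)$. Everything else is formal.
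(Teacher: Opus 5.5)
Your proposal is correct and follows the same route as the paper: subtract the two shell expansions, discard the vanishing terms with $k<K$, factor out $(K+\frac32)^{-s}$, and use that the tail ratios $\bigl((K+\frac32)/(k+\frac32)\bigr)^{s}$ decay. Your $O(k^2)$ bound on $|D_k|$ over $T^2$ supplies the uniform tail estimate that the paper's proof leaves implicit, and your choice of a point $(u_0,v_0)$ with $D_K(u_0,v_0)\neq 0$ makes explicit the final nonvanishing step, which the paper only asserts.
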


\begin{proof}
Subtracting the two eta-character expansions gives
\begin{equation}
\eta^{T^2}_{\mathcal X}(s;u,v)
-
\eta^{T^2}_{\mathcal X'}(s;u,v)
=
\sum_{k=0}^{\infty}
\left(k+\frac32\right)^{-s}
\left(
\widehat\Delta_k(\mathcal X;u,v)
-
\widehat\Delta_k(\mathcal X';u,v)
\right).
\end{equation}
By assumption, all terms with $k<K$ vanish.  The first nonzero term is therefore the $K$-th term.  As $\operatorname{Re}(s)\to+\infty$, every later term is smaller by a factor of the form
\begin{equation}
\left(\frac{K+\frac32}{k+\frac32}\right)^{s},
\qquad
k>K.
\end{equation}
Hence the displayed $K$-th term is the leading term.  Since its coefficient is nonzero, the $\eta$ characters are different.
\end{proof}

A first-differing shell separates the equivariant $\eta$ characters as spectral functions, for example in the large-$\operatorname{Re}(s)$ regime where the first nonzero shell controls the expansion.  The $s=0$ statement used below is stronger: it is the residual-circle computation of $\Phi(\delta)$, especially the nonvanishing of $\Phi''(0)$.

\begin{corollary}[Two-variable refinement of the one-variable $\eta$ shadow]
Suppose two data collections satisfy
\begin{equation}
\Delta_K(\mathcal X;u)
=
\Delta_K(\mathcal X';u),
\end{equation}
but
\begin{equation}
\widehat\Delta_K(\mathcal X;u,v)
\neq
\widehat\Delta_K(\mathcal X';u,v).
\end{equation}
Then the old one-variable shell does not separate the data at level $K$, but the full $T^2$-equivariant $\eta$ shell does.
\end{corollary}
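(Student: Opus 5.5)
The corollary follows from Theorem~\ref{thm:first-differing-shell-T2}, applied at the fixed level $K$, together with the specialization identity $\widehat\Delta_k(p,q;u,1)=\Delta_k(p,q;u)$. There are two things to show: that the one-variable shell at level $K$ fails to separate, and that the two-variable shell succeeds.

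The first part needs nothing beyond the hypothesis. The one-variable shell at level $K$ is the coefficient of $\left(K+\frac32\right)^{-s}$ in the $v=1$ specialization of $\eta^{T^2}$. By the specialization proposition of \autoref{subsec:two-variable-residue}, this coefficient equals $\Delta_K(\cdot;u)$. The hypothesis says these coefficients coincide for $\mathcal X$ and $\mathcal X'$.

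For the second part, I consider the difference of the $T^2$-equivariant $\eta$ characters as the Dirichlet-type series
\begin{equation}
\sum_{k\ge0}\left(k+\tfrac32\right)^{-s}\left(\widehat\Delta_k(\mathcal X;u,v)-\widehat\Delta_k(\mathcal X';u,v)\right).
\end{equation}
The exponents $k+\frac32$ are distinct, and each coefficient is a Laurent polynomial. So I would argue that the coefficient attached to each exponent is uniquely determined by the function of $s$ on a right half-plane.

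\textbf{Uniqueness step.} Let $K_0$ be the smallest level at which the difference is nonzero; the hypothesis guarantees $K_0\le K$. Multiply the series by $\left(K_0+\frac32\right)^{s}$ and let $\operatorname{Re}(s)\to+\infty$, exactly as in the proof of Theorem~\ref{thm:first-differing-shell-T2}. This recovers the $K_0$ coefficient and shows it is nonzero. Subtracting it and repeating recovers every coefficient in turn.

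Applying this at exponent $K+\frac32$ gives that the $K$-th $\eta$-shell coefficients are $\widehat\Delta_K(\mathcal X;u,v)\neq\widehat\Delta_K(\mathcal X';u,v)$. Hence the full $T^2$-equivariant shell separates the data at level $K$, and in particular $\eta^{T^2}_{\mathcal X}\neq\eta^{T^2}_{\mathcal X'}$.

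The only real obstacle is to phrase the claim so that it does not require all lower shells to agree. Theorem~\ref{thm:first-differing-shell-T2} assumes agreement below $K$, but the corollary does not. I would avoid needing that assumption by using the uniqueness-of-coefficients argument above. That argument only needs convergence for large $\operatorname{Re}(s)$, which holds because the multiplicities grow polynomially in $k$.
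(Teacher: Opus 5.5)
Your proof is correct and follows the paper's route: the $v=1$ specialization identity gives the first claim, and the second comes from the leading-term argument of Theorem~\ref{thm:first-differing-shell-T2} applied at the first level $K_0\le K$ where the two-variable shells differ (this is what the paper means by ``a first such difference''). Your extra coefficient-uniqueness step, which recovers the level-$K$ coefficient itself, is a harmless strengthening, and it makes explicit the lower-shell hypothesis that the paper's proof passes over.
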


\begin{proof}
The equality of the one-variable residues is the equality after specializing to $v=1$.  The inequality of the two-variable residues says that the distinction lies in the $a_2$-weight data.  By Theorem~\ref{thm:first-differing-shell-T2}, a first such difference gives a nonzero difference of $T^2$-equivariant $\eta$ characters.
\end{proof}

\subsection{Normalization of the residual-circle $\eta$ germ}
\label{subsec:residual-circle-normalization}

We fix the residual-circle convention used in the rest of the section.  Let
\begin{equation}
L(P,q)=S^3/\mathbb Z_P,
\qquad
\gamma(z_1,z_2)=(\omega z_1,\omega^qz_2),
\qquad
\omega=e^{2\pi i/P},
\end{equation}
where $P$ is odd and $q\in(\mathbb Z/P\mathbb Z)^\times$.  For $1\leq j\leq P-1$, put
\begin{equation}
x_j=\frac{\pi j}{P}.
\end{equation}
The residual-circle parameter is the half-angle $\delta$.  Thus the residual-circle element is
\begin{equation}
g_\delta=(e^{2i\delta},1)\in T^2.
\end{equation}
In the deck-sum expression, this shifts the first half-angle by $x_j\mapsto x_j+\delta$, while the second half-angle is kept fixed.

\begin{definition}[Odd spin-lift representative]
Let $P$ be odd and let $q\in(\mathbb Z/P\mathbb Z)^\times$.  We denote by $Q(q)$ the unique odd representative modulo $2P$ satisfying
\begin{equation}
Q(q)\equiv q\pmod P,
\qquad
1\leq Q(q)\leq 2P-1.
\end{equation}
Equivalently,
\begin{equation}
Q(q)=
\begin{cases}
q, & q\text{ odd},\\
q+P, & q\text{ even}.
\end{cases}
\end{equation}
For example, when $P=25$, one has $Q(4)=29$ and $Q(9)=9$.
\end{definition}

The following elementary identity explains the sign appearing in the even-$q$ case.

\begin{lemma}[Odd representative identity]
\label{lem:spin-lift-representative-identity}
For $1\leq j\leq P-1$, one has
\begin{equation}
\csc(Q(q)x_j)=
\begin{cases}
\csc(qx_j), & q\text{ odd},\\
(-1)^j\csc(qx_j), & q\text{ even}.
\end{cases}
\end{equation}
\end{lemma}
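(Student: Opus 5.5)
The plan is a direct case split on the parity of $q$, using only the definition of $Q(q)$ and the $\pi$-periodicity behaviour of $\sin$. If $q$ is odd, then $Q(q)=q$ by definition, and the first case is immediate.

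If $q$ is even, then $Q(q)=q+P$, and we have
\begin{equation}
Q(q)x_j=\frac{\pi j(q+P)}{P}=qx_j+\pi j.
\end{equation}
The key step is the identity $\sin(\theta+\pi j)=(-1)^j\sin\theta$, which gives
\begin{equation}
\sin(Q(q)x_j)=(-1)^j\sin(qx_j).
\end{equation}
Taking reciprocals then yields $\csc(Q(q)x_j)=(-1)^j\csc(qx_j)$, since $1/(-1)^j=(-1)^j$.

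It remains to check that both cosecants are finite, so that the reciprocal step is legitimate. Since $\gcd(q,P)=1$ and $1\leq j\leq P-1$, the number $qj$ is not divisible by $P$. Hence $qx_j\notin\pi\mathbb Z$, and $\sin(qx_j)\neq0$. By the displayed relation, $\sin(Q(q)x_j)\neq0$ as well.

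There is no real obstacle here. The only point needing care is this nonvanishing check, which is where coprimality and the range $1\leq j\leq P-1$ are used.
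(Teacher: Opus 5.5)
Your proposal is correct and follows the paper's proof exactly: the odd case is immediate from $Q(q)=q$, and the even case uses $Q(q)x_j=qx_j+\pi j$ together with $\sin(\theta+\pi j)=(-1)^j\sin\theta$, followed by taking reciprocals. Your extra check that $\sin(qx_j)\neq 0$, from $\gcd(q,P)=1$ and $1\leq j\leq P-1$, is a reasonable refinement; the paper leaves it implicit here and verifies the same nonvanishing separately in the analyticity lemma.
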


\begin{proof}
If $q$ is odd, then $Q(q)=q$.  If $q$ is even, then $Q(q)=q+P$, and
\begin{equation}
\sin(Q(q)x_j)=\sin(qx_j+\pi j)=(-1)^j\sin(qx_j).
\end{equation}
Taking reciprocals gives the formula.
\end{proof}

\begin{definition}[Normalized residual-circle $\eta$ germ]
For $|\delta|<\pi/P$, define the normalized residual-circle $\eta$ germ by
\begin{equation}
\mathcal E^{\text{norm}}_{P,q}(\delta)
=
\sum_{j=1}^{P-1}
\csc\bigl(Q(q)x_j\bigr)\csc(x_j+\delta).
\end{equation}
For two parameters $q_1,q_2\in(\mathbb Z/P\mathbb Z)^\times$, define the normalized relative residual-circle $\eta$ germ by
\begin{equation}
\Phi_{q_1,q_2}(\delta)
=
\mathcal E^{\text{norm}}_{P,q_1}(\delta)
-
\mathcal E^{\text{norm}}_{P,q_2}(\delta).
\end{equation}
Equivalently,
\begin{equation}
\Phi_{q_1,q_2}(\delta)
=
\sum_{j=1}^{P-1}
\left[
\csc\bigl(Q(q_1)x_j\bigr)
-
\csc\bigl(Q(q_2)x_j\bigr)
\right]\csc(x_j+\delta).
\end{equation}
\end{definition}

\begin{lemma}[Analyticity near the identity]
\label{lem:normalized-residual-germ-analytic}
The functions $\mathcal E^{\text{norm}}_{P,q}(\delta)$ and $\Phi_{q_1,q_2}(\delta)$ are real-analytic for $|\delta|<\pi/P$.
\end{lemma}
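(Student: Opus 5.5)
The plan is to reduce the claim to analyticity of each summand. Both $\mathcal E^{\text{norm}}_{P,q}$ and $\Phi_{q_1,q_2}$ are finite sums over $1\leq j\leq P-1$ of a constant times $\csc(x_j+\delta)$. A finite sum of real-analytic functions is real-analytic, and a difference of two such sums is as well, so it suffices to treat each summand separately.

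First, the constant coefficients are finite. Since $Q(q)\equiv q\pmod P$ with $q$ a unit modulo $P$ and $1\leq j\leq P-1$, the product $Q(q)j$ is not divisible by $P$. Hence $Q(q)x_j=\pi Q(q)j/P\notin\pi\mathbb Z$, so $\sin(Q(q)x_j)\neq0$ and $\csc(Q(q)x_j)$ is a finite real number. Equivalently, one can invoke Lemma~\ref{lem:spin-lift-representative-identity} together with $qj\not\equiv0\pmod P$.

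Second, each factor $\csc(x_j+\delta)$ has no pole in the range considered. For $|\delta|<\pi/P$ and $1\leq j\leq P-1$, we have
\begin{equation}
0<\frac{\pi(j-1)}{P}<\ldots
\end{equation}
more precisely, $x_j+\delta\in\bigl(\pi(j-1)/P,\ \pi(j+1)/P\bigr)\subset(0,\pi)$. On this interval $\sin$ is strictly positive, hence nonvanishing. Since $\sin$ is real-analytic (indeed entire), the reciprocal $1/\sin(x_j+\delta)$ is real-analytic on the open interval $|\delta|<\pi/P$. If one wants explicit control of the power series, it is convergent in the complex disc $|\delta|<\pi/P$, because the nearest zeros of $\sin(x_j+\delta)$ lie at $\delta=-x_j$ and $\delta=\pi-x_j$, and both are at distance at least $\pi/P$ from $0$.

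Third, the conclusion follows by summing and subtracting. There is no genuine obstacle here. The only point needing care is the endpoint check $j=1$ and $j=P-1$: this is exactly where the bound $|\delta|<\pi/P$ is sharp, since at $\delta=\mp\pi/P$ the arguments hit $0$ or $\pi$.
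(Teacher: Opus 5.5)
Your proposal is correct and follows essentially the same route as the paper: a finite sum of summands, with $\sin(Q(q)x_j)\neq 0$ from $\gcd(Q(q),P)=1$ and $x_j+\delta\in(0,\pi)$ for $|\delta|<\pi/P$. Delete the abandoned display, since its strict inequality $0<\pi(j-1)/P$ fails at $j=1$; the ``more precisely'' containment that replaces it is the correct one.
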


\begin{proof}
Since $1\leq j\leq P-1$, one has $0<x_j<\pi$.  If $|\delta|<\pi/P$, then $x_j+\delta\notin\pi\mathbb Z$.  Hence $\csc(x_j+\delta)$ is analytic on this interval.  Also, $\gcd(Q(q),P)=1$, so $Q(q)j\not\equiv0\pmod P$ for $1\leq j\leq P-1$.  Therefore $\sin(Q(q)x_j)\neq 0$.  Each summand is analytic, and the sum is finite.
\end{proof}

Thus no extra regularization at $\delta=0$ is being used in the normalized germ.  The possible fixed-point singularity of a local expression is not present in this finite deck-average expression, where the summation is over $1\leq j\leq P-1$.

\begin{lemma}[Deck-sum normalization for the relative residual germ]
\label{lem:donnelly-aps-normalization-factor}
Fix an odd value of $P$ and a spin-lift convention.  Up to a summand independent of $q$, the Donnelly-APS residual-circle equivariant $\eta$ germ has the normalized deck-sum form
\begin{equation}
\eta^{\text{res}}_{P,q}(\delta)
=
\kappa_P\mathcal E^{\text{norm}}_{P,q}(\delta)
+
\text{a term independent of }q,
\end{equation}
where $\kappa_P\neq 0$ depends only on $P$ and on the global eta-normalization convention.  Consequently, for any two parameters $q_1,q_2\in(\mathbb Z/P\mathbb Z)^\times$,
\begin{equation}
\eta^{\text{res}}_{P,q_1}(\delta)
-
\eta^{\text{res}}_{P,q_2}(\delta)
=
\kappa_P\Phi_{q_1,q_2}(\delta).
\end{equation}
\end{lemma}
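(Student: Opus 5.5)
The plan is to start from Donnelly's fixed-point (deck-average) formula for the equivariant $\eta$ invariant of a quotient $S^3/\mathbb Z_P$. The lens space is the quotient of the round sphere by the free cyclic action. The $T^2$-action commutes with the deck group. Hence for $g\in T^2$ the $\eta$ character of $L(P,q)$ is the average over the deck group of the $\eta$ character on $S^3$:
\begin{equation}
\eta_g(D_{P,q})=\frac1P\sum_{j=0}^{P-1}\eta_{\tilde g\gamma^j}(D_{S^3}).
\end{equation}
Here $\tilde g$ is a spin lift of $g$. This is the identity behind the spectral description of \autoref{subsec:stage-1B-affine-parity-shell}: projecting onto $\gamma$-invariant spinors imposes exactly the congruence $a_1+qa_2\equiv0\pmod P$. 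The $j=0$ term is the $S^3$ contribution at $\tilde g$, which does not involve $q$; I will put it into the ``term independent of $q$''.

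For $j\neq0$ the element $\tilde g_\delta\gamma^j$ acts on $S^3$ freely, away from the identity coset. Its equivariant $\eta$ value on $S^3$ is given by the standard closed formula for rotations of the round sphere. For a rotation with half-angles $(\theta_1,\theta_2)$, it is a constant multiple of $\csc\theta_1\csc\theta_2$; one can check this by summing the $S^3$ analogue of the parity-shell series as a geometric series and evaluating at $s=0$. Under the residual element, the half-angles of $\tilde g_\delta\gamma^j$ are $(x_j+\delta,\,qx_j)$ modulo the spin ambiguity. The lift of $\gamma^j$ to $\Spin(4)$ must be chosen so that $\gamma^P$ lifts to the identity, which is possible since $P$ is odd. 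This forces the second half-angle to be $Q(q)x_j$, the odd representative, rather than $qx_j$. This is exactly where Lemma~\ref{lem:spin-lift-representative-identity} enters: it accounts for the sign $(-1)^j$ when $q$ is even.

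Collecting terms gives
\begin{equation}
\eta^{\text{res}}_{P,q}(\delta)=\kappa_P\sum_{j=1}^{P-1}\csc(Q(q)x_j)\csc(x_j+\delta)+(\text{$j=0$ term}),
\end{equation}
with $\kappa_P=c/P$. The constant $c$ comes from the sphere formula, so $\kappa_P$ depends only on $P$ and the global convention, and it is nonzero. Subtracting the formulas for $q_1$ and $q_2$ cancels the $q$-independent term and yields $\kappa_P\Phi_{q_1,q_2}(\delta)$. Real-analyticity near $\delta=0$ is Lemma~\ref{lem:normalized-residual-germ-analytic}.

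The main obstacle is the second step: fixing the spin lift and the sign convention consistently. Concretely, I need to verify that the deck-sum half-angles produce $Q(q)x_j$ with the same branch labelling as the parity-shell formula, and that no $j$-dependent sign beyond Lemma~\ref{lem:spin-lift-representative-identity} survives. I would first test this on the $\delta=0$ case against $\mathfrak E(P,q)$. Since $(P+1)x_j=x_j+\pi j$, one has $\csc((P+1)x_j)\csc((P+1)qx_j)=\pm\csc(x_j)\csc(Q(q)x_j)$. It remains to check that the sign in this identity agrees with the one coming from the lift. An opposite global choice of lift would only flip the sign of $\kappa_P$, which is harmless.
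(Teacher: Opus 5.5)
Your proposal is correct and follows the paper's own argument. You average over the deck translates $g_\delta\gamma^j$, drop the $q$-independent $j=0$ term, and apply Donnelly's cosecant-product formula with half-angles $(x_j+\delta,\,Q(q)x_j)$, which produces a global nonzero constant. You also make the paper's sketch more explicit, by deriving $Q(q)$ from the condition $\tilde\gamma^P=1$ and by identifying $\kappa_P=c/P$.

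The sign check you flag does close. The pairs $((P+1)x_j,(P+1)qx_j)$ and $(x_j,Q(q)x_j)$ represent the same order-$P$ lift of $\gamma^j$, and indeed $\csc((P+1)x_j)\csc((P+1)qx_j)=\csc(x_j)\csc(Q(q)x_j)$ holds exactly for both parities of $q$.
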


\begin{proof}
Use the standard heat-kernel averaging description of equivariant $\eta$ invariants on the quotient $S^3/\mathbb Z_P$.  Since the residual-circle element $g_\delta=(e^{2i\delta},1)$ commutes with the deck transformation $\gamma$, the equivariant trace on the quotient is obtained by summing the corresponding traces of $g_\delta\gamma^j$ on the covering sphere.  The summand with $j=0$ is independent of $q$, so it cancels in every relative difference between $L(P,q_1)$ and $L(P,q_2)$.  Thus only the terms $1\leq j\leq P-1$ enter the relative formula.

For $1\leq j\leq P-1$, the element $g_\delta\gamma^j$ has rotation half-angles $x_j+\delta$ and $Q(q)x_j$, where $x_j=\pi j/P$ and $Q(q)$ is the odd spin-lift representative.  Donnelly's formula for the spin Dirac $\eta$ invariant of a spherical space form gives, for these nontrivial deck terms, a product of the corresponding cosecant factors, multiplied by a global nonzero constant depending only on the Dirac normalization, the quotient order $P$, and the fixed spin-lift convention~\cite{DonnellyGSpaces}.  This gives the displayed relative identity.  The interval $|\delta|<\pi/P$ keeps the finite deck sum away from the poles described in Lemma~\ref{lem:normalized-residual-germ-analytic}, so the resulting relative germ is analytic near $\delta=0$.
\end{proof}

In the sequel we work with the normalized relative germ $\Phi_{q_1,q_2}$.  By the preceding lemma, vanishing and nonvanishing statements for $\Phi_{q_1,q_2}$ are exactly the corresponding relative Donnelly-APS statements, up to a common nonzero factor.

For the pair $L(25,4)$ and $L(25,9)$, we write $\Phi(\delta)=\Phi_{4,9}(\delta)$.  Since $Q(4)=29$ and $Q(9)=9$, this means
\begin{equation}
\Phi(\delta)
=
\sum_{j=1}^{24}
\left[
\csc(29x_j)-\csc(9x_j)
\right]\csc(x_j+\delta),
\qquad
x_j=\frac{\pi j}{25}.
\end{equation}
Equivalently, using $\sin(29x_j)=(-1)^j\sin(4x_j)$, one may write
\begin{equation}
\Phi(\delta)
=
\sum_{j=1}^{24}
\frac{(-1)^j}{\sin(x_j+\delta)\sin(4x_j)}
-
\sum_{j=1}^{24}
\frac{1}{\sin(x_j+\delta)\sin(9x_j)}.
\end{equation}

\subsection{The $L(25,4)$ versus $L(25,9)$ equivariant $\eta$ separation}
\label{subsec:L254-L259-equivariant-eta}

The first explicit residual-circle specialization is the pair $L(25,4)$ and $L(25,9)$.  Here the ordinary $\eta$ value cancels at the identity element, while the residual-circle equivariant $\eta$ germ separates the same pair at second order in the circle parameter.

Let $x_j=\pi j/25$ for $1\leq j\leq24$.  By the normalization convention in~\autoref{subsec:residual-circle-normalization}, the normalized relative residual-circle $\eta$ germ for $L(25,4)$ and $L(25,9)$ is
\begin{equation}
\Phi(\delta)
=
\sum_{j=1}^{24}
\left[
\csc(29x_j)-\csc(9x_j)
\right]\csc(x_j+\delta).
\end{equation}
Define
\begin{equation}
C_j=
\csc(29x_j)-\csc(9x_j).
\end{equation}
Thus
\begin{equation}
\Phi(\delta)
=
\sum_{j=1}^{24}C_j\csc(x_j+\delta).
\end{equation}

\begin{lemma}[Pairing symmetry]
For $1\leq j\leq24$, one has
\begin{equation}
C_{25-j}=C_j.
\end{equation}
Consequently,
\begin{equation}
\Phi(\delta)
=
\sum_{j=1}^{12}
C_j
\left[
\csc(x_j+\delta)+\csc(x_j-\delta)
\right].
\end{equation}
In particular, $\Phi$ is an even function of $\delta$.
\end{lemma}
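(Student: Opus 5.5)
We show that each cosecant factor in $C_j$ is invariant under $j\mapsto 25-j$, then regroup the sum and read off evenness.

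\textbf{Step 1: the symmetry $C_{25-j}=C_j$.} Put $x_{25-j}=\pi-x_j$. For any odd integer $Q$ we have
\begin{equation}
\sin\bigl(Q(\pi-x_j)\bigr)=\sin(Q\pi-Qx_j)=(-1)^{Q+1}\sin(Qx_j)=\sin(Qx_j).
\end{equation}
Since $Q(4)=29$ and $Q(9)=9$ are both odd, this gives $\csc(29x_{25-j})=\csc(29x_j)$ and $\csc(9x_{25-j})=\csc(9x_j)$. Hence $C_{25-j}=C_j$.

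This is the reason the odd spin-lift representatives $Q(q)$ from the normalization subsection are the right objects to work with. If we instead used the even representative $4$, the factor $\csc(4x_j)$ would change sign under $j\mapsto 25-j$. That sign would have to be absorbed by the $(-1)^j$ from Lemma~\ref{lem:spin-lift-representative-identity}, and $(-1)^{25-j}=-(-1)^j$ does exactly this. So the two forms of $\Phi$ agree, and the symmetry can be checked in either one.

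\textbf{Step 2: regrouping the sum.} Split the index range $1\le j\le 24$ into $1\le j\le 12$ and $13\le j\le 24$. In the second half substitute $j=25-j'$ with $1\le j'\le 12$. Then
\begin{equation}
\csc(x_{25-j'}+\delta)=\csc(\pi-x_{j'}+\delta)=\csc\bigl(\pi-(x_{j'}-\delta)\bigr)=\csc(x_{j'}-\delta),
\end{equation}
using $\sin(\pi-y)=\sin y$. Combining this with $C_{25-j'}=C_{j'}$ gives the displayed paired formula
\begin{equation}
\Phi(\delta)=\sum_{j=1}^{12}C_j\left[\csc(x_j+\delta)+\csc(x_j-\delta)\right].
\end{equation}
The range is chosen so that $P=25$ is odd, which means there is no fixed index $j=P/2$ to handle separately.

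\textbf{Step 3: evenness.} Each bracket $\csc(x_j+\delta)+\csc(x_j-\delta)$ is unchanged under $\delta\mapsto-\delta$, so $\Phi(-\delta)=\Phi(\delta)$. By Lemma~\ref{lem:normalized-residual-germ-analytic}, all terms are analytic on $|\delta|<\pi/25$, so this is an identity of analytic germs. In particular $\Phi'(0)=0$, which is the symmetry vanishing of the first derivative claimed in the main theorem.

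The only point that needs care is the parity bookkeeping in Step 1, namely that $Q(q)$ is odd. Everything else is a direct substitution.
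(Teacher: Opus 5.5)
Your proof is correct and is essentially the paper's argument: the paper also uses $x_{25-j}=\pi-x_j$ with the oddness of $29$ and $9$ to get $C_{25-j}=C_j$, then $\csc(x_{25-j}+\delta)=\csc(x_j-\delta)$ to pair the terms and read off evenness. Your aside about the even representative $4$ and the compensating sign $(-1)^{25-j}=-(-1)^j$ is a correct consistency check, though the proof does not need it.
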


\begin{proof}
Since $x_{25-j}=\pi-x_j$, we have
\begin{equation}
\sin(29x_{25-j})
=
\sin(29\pi-29x_j)
=
\sin(29x_j),
\end{equation}
and similarly
\begin{equation}
\sin(9x_{25-j})
=
\sin(9\pi-9x_j)
=
\sin(9x_j).
\end{equation}
Therefore $C_{25-j}=C_j$.  Also,
\begin{equation}
\csc(\delta+x_{25-j})
=
\csc(\delta+\pi-x_j)
=
\csc(x_j-\delta).
\end{equation}
Pairing the $j$-term with the $(25-j)$-term gives the displayed formula.  The bracket is invariant under $\delta\mapsto-\delta$, so $\Phi$ is even.
\end{proof}

\begin{proposition}[Ordinary $\eta$ cancellation]
One has
\begin{equation}
\Phi(0)=0.
\end{equation}
\end{proposition}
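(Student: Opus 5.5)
At $\delta=0$ the statement reduces to a comparison of the two unnormalised trigonometric sums from \autoref{sec:stage5-scalar-eta-square}. Setting $\delta=0$ gives
\begin{equation}
\Phi(0)=\sum_{j=1}^{24}\csc(29x_j)\csc(x_j)-\sum_{j=1}^{24}\csc(9x_j)\csc(x_j).
\end{equation}
I would identify each of the two sums with the auxiliary sum $\mathcal S_a$ from the proof of the square-family $\eta$ equality, taking $\ell=5$ and $a=1,2$.

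For the second sum, $q_2=9=2\ell-1$ is odd, so $Q(9)=9$. The sum $\sum_j\csc(x_j)\csc(9x_j)$ is exactly $\mathcal S_2$ with $(-1)^{2r}=1$.

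For the first sum, I would use Lemma~\ref{lem:spin-lift-representative-identity}. It gives $\csc(29x_j)=(-1)^j\csc(4x_j)$, so the first sum equals
\begin{equation}
\sum_{j=1}^{24}\frac{(-1)^j}{\sin(x_j)\sin(4x_j)}.
\end{equation}
This is $\mathcal S_1$ with $a\ell-1=4$.

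The square-family theorem computed $\mathcal S_a=-(\ell^2-1)/3$ for both $a=1$ and $a=2$. Its steps were: the block decomposition $r=m\ell+s$, the cancellation of every $s\neq0$ block via Lemma~\ref{lem:cotangent-sum-identity}, and the $\csc^2$ identity for the $s=0$ terms. At $\ell=5$ both sums therefore equal $-8$, and their difference is $\Phi(0)=0$.

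The one step needing care is that the sign conventions match exactly. In the earlier proof, $\mathfrak E(P,q)$ was rewritten via $\sin((P+1)r\pi/P)=(-1)^r\sin(r\pi/P)$, which produced the sign $(-1)^{ar}$ in $\mathcal S_a$. Here the signs come instead from the odd representative $Q(q)$. I would check directly that no stray overall sign of the form $(-1)^j$ or $(-1)^{qj}$ separates the two normalizations.

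For $a=2$ there is no sign on either side. For $a=1$ the only sign is $(-1)^j$ on both sides.

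The earlier computation is really a statement about $\mathcal S_a$, not about $\mathfrak E$, so citing $\mathcal S_a=-(\ell^2-1)/3$ suffices. As a cross-check I would redo the $s=0$ terms at $\ell=5$ directly: $r=5m$ with $1\le m\le 4$, and each term contributes $-\csc^2(m\pi/5)$, for a total of $-8$.
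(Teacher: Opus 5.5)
Your proof is correct and follows essentially the same route as the paper: the paper identifies \(\Phi(0)\) with the ordinary \(\eta\)-shadow difference for the square pair at \(\ell=5\), then invokes the square-family cotangent cancellation \(\mathcal S_1=\mathcal S_2=-(\ell^2-1)/3\). You also check explicitly, via the odd-representative identity, that the \(Q(q)\)-normalized sums coincide term by term with \(\mathcal S_1\) (sign \((-1)^j\), \(4=a\ell-1\)) and \(\mathcal S_2\) (no sign), which supplies the identification the paper only asserts.
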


\begin{proof}
At $\delta=0$,
\begin{equation}
\Phi(0)
=
\sum_{j=1}^{24}
\left(
\csc(29x_j)-\csc(9x_j)
\right)
\csc(x_j).
\end{equation}
This is exactly the ordinary eta-shadow difference for the square pair $L(25,4)$ and $L(25,9)$.  The square-family cotangent cancellation gives
\begin{equation}
\sum_{j=1}^{24}
\left(
\csc(29x_j)-\csc(9x_j)
\right)
\csc(x_j)
=
0.
\end{equation}
Hence $\Phi(0)=0$.
\end{proof}

\begin{proposition}[First-order blindness]
One has
\begin{equation}
\Phi'(0)=0.
\end{equation}
\end{proposition}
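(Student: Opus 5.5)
The plan is to deduce $\Phi'(0)=0$ directly from the Pairing symmetry lemma just proved, without any further trigonometric evaluation. By that lemma, $\Phi$ can be written as the finite sum
\begin{equation}
\Phi(\delta)=\sum_{j=1}^{12}C_j\left[\csc(x_j+\delta)+\csc(x_j-\delta)\right],
\end{equation}
and each bracket is invariant under $\delta\mapsto-\delta$. Hence $\Phi$ is an even function on the interval $|\delta|<\pi/25$.

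By Lemma~\ref{lem:normalized-residual-germ-analytic}, $\Phi$ is real-analytic, and in particular differentiable, on that interval. The derivative of an even differentiable function is odd, so evaluating at $\delta=0$ gives $\Phi'(0)=-\Phi'(0)$, which forces $\Phi'(0)=0$.

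As a cross-check, and to make the cancellation explicit, I would also differentiate term by term using $\frac{d}{d\delta}\csc(y)=-\csc(y)\cot(y)$. This gives
\begin{equation}
\Phi'(0)=-\sum_{j=1}^{24}C_j\csc(x_j)\cot(x_j).
\end{equation}
Under $j\mapsto 25-j$, the coefficient satisfies $C_{25-j}=C_j$, while $\csc(x_{25-j})=\csc(x_j)$ and $\cot(x_{25-j})=-\cot(x_j)$. The summands therefore cancel in pairs, and there is no middle term because $25$ is odd.

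There is no real obstacle here. The only point needing care is that term-by-term differentiation is legitimate, which holds because the sum is finite and every summand is analytic away from the poles. The same argument applies verbatim to $\Phi_{q_1,q_2}$ for general odd $P$, because $\csc(Q(q)x_{P-j})=\csc(Q(q)x_j)$ whenever $Q(q)$ is odd.
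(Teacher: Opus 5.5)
Your proposal is correct and matches the paper's proof, which likewise deduces $\Phi'(0)=0$ directly from the evenness of $\Phi$ given by the pairing symmetry lemma. Your explicit term-by-term cancellation under $j\mapsto 25-j$ is a valid additional check, and so is your remark that the argument extends to general odd $P$ because $Q(q)$ is odd.
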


\begin{proof}
By the pairing symmetry lemma, $\Phi$ is even in $\delta$.  Therefore its first derivative at the origin vanishes:
\begin{equation}
\Phi'(0)=0.
\end{equation}
\end{proof}

\begin{lemma}[Exact second derivative]
The second derivative of $\Phi$ at the origin is
\begin{equation}
\Phi''(0)
=
-6080.
\end{equation}
\end{lemma}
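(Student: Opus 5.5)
By the pairing symmetry lemma and the ordinary $\eta$ cancellation $\Phi(0)=0$, the computation reduces to a single finite trigonometric sum. Differentiating termwise, which is legitimate by Lemma~\ref{lem:normalized-residual-germ-analytic}, and using
\[
\frac{d^2}{dx^2}\csc x=\csc x\,(\cot^2x+\csc^2x)=2\csc^3x-\csc x,
\]
we get
\[
\Phi''(0)=2\sum_{j=1}^{24}C_j\csc^3(x_j)-\Phi(0)=2\sum_{j=1}^{24}C_j\csc^3(x_j).
\]
So it suffices to show $\sum_j C_j\csc^3(x_j)=-3040$.

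As in the proof of the ordinary $\eta$ equality for the square family, I would rewrite $\csc(29x_j)=(-1)^j\csc(4x_j)$ and treat both parameters uniformly. With $\ell=5$ and $q_a=a\ell-1$ for $a=1,2$, define
\[
\mathcal T_a=\sum_{r=1}^{\ell^2-1}\frac{(-1)^{ar}}{\sin^3(r\pi/\ell^2)\,\sin(q_ar\pi/\ell^2)},
\]
so that the target quantity is $\mathcal T_1-\mathcal T_2$. I then split $r=m\ell+s$.

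For $s=0$ the earlier computation applies verbatim. It gives the contribution $-\sum_{m=1}^{\ell-1}\csc^4(m\pi/\ell)$, which is the same for $a=1$ and $a=2$ and therefore cancels in the difference.

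For $s\neq0$, set $\alpha=\pi s/\ell^2$, $\beta=\pi/\ell$ and $Y=\pi as/\ell$, as before. I would use the partial-fraction expansion of $1/(\sin^3x\,\sin(Y-x))$ in powers of $\cot x$ and $\csc^2x$. Iterating the identity $1/(\sin x\sin(Y-x))=(\cot x+\cot(Y-x))/\sin Y$ gives
\[
\frac{1}{\sin^3x\,\sin(Y-x)}=\frac{\csc^2x\,\bigl(\cot x+\cot(Y-x)\bigr)}{\sin Y},
\]
and $\csc^2x\cot(Y-x)$ can be reduced further using the addition formula $\cot(Y-x)=(\cot Y\cot x+1)/(\cot x-\cot Y)$.

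The resulting block sums over $m=0,\dots,\ell-1$ are moments $\sum_m\cot^k(\alpha+m\beta)$ for $k\le 3$, together with the term $\sum_m\cot(Y-\alpha-m\beta)$. All of these follow from Lemma~\ref{lem:cotangent-sum-identity} and its $\delta$-derivatives. For example, differentiating gives $\sum_m\csc^2(\alpha+m\beta)=\ell^2\csc^2(\ell\alpha)$, and one more derivative gives the $\csc^2\cot$ moment. Since $\ell\alpha=\pi s/\ell$ and $\ell(Y-\alpha)\equiv-\pi s/\ell$ modulo $\pi$, every block sum becomes an explicit polynomial in $\cot(\pi s/\ell)$ and $\csc(\pi s/\ell)$, multiplied by $\csc Y$ and a sign. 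Unlike the first-order case, the two halves no longer cancel; what survives is a difference of terms like $\ell^3\csc^2(\pi s/\ell)\cot(\pi s/\ell)\csc(\pi as/\ell)$.

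The last step is to sum over $s=1,\dots,\ell-1$. These are reduced sums of the form $\sum_s\csc^k(\pi s/\ell)\csc(\pi as/\ell)\cot^i(\pi s/\ell)$. For $a=1$ they are standard power sums, $\sum\csc^2=(\ell^2-1)/3$ and $\sum\csc^4=(\ell^2-1)(\ell^2+11)/45$. For $a=2$ I would rewrite using $\sin(2t)=2\sin t\cos t$ and pair $s$ with $\ell-s$ to kill the odd terms. For $\ell=5$ all of this is a sum of only four terms, so at worst I would evaluate it exactly in $\mathbb Q(\sqrt5)$ using $\sin^2(\pi/5),\sin^2(2\pi/5)=(5\mp\sqrt5)/8$ and check that the $\sqrt5$ parts cancel, leaving $-3040$.

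The main obstacle is the $\csc^2x\cot(Y-x)$ block. The cross term has no clean closed form until it is decomposed into simple poles at $x=0$ and $x=Y$, and keeping track of the signs $(-1)^{as}$ and of which branch of $\cot$ each pole lands on is where errors are likely. I would organize it so that the bookkeeping is symbolic in $\ell$, which yields the general block-moment identities later needed for $C_\ell$. As an independent safeguard, I would cross-check the final value $-6080$ by high-precision numerical evaluation of the 24-term sum.
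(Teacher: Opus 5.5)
Your proposal is correct, and it takes a genuinely different route from the paper. The paper differentiates termwise and pairs $j$ with $25-j$. It then certifies the resulting twelve-term identity by clearing denominators in $\mathbb Q(\zeta)$, $\zeta=e^{\pi i/25}$, and reducing modulo $\Phi_{50}$ (Appendix~\ref{app:L25-eta-second-derivative}). That certificate is mechanical and machine-checkable, but it explains nothing and does not generalize. This is why the paper later needs the cubic-convolution and block-moment machinery of \autoref{subsec:stage5B3-square-family-coefficient} for general $\ell$.

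You instead use $\Phi(0)=0$ to discard the $\csc$ part of $\frac{d^2}{dx^2}\csc x=2\csc^3x-\csc x$; the paper makes the same observation later as $D_1(\ell)=0$. You then reuse the $r=m\ell+s$ block decomposition and the derivatives of Lemma~\ref{lem:cotangent-sum-identity}. The $s=0$ blocks cancel, and the rest collapses to four terms in $\mathbb Q(\sqrt5)$ that can be checked by hand. Carried out symbolically, your route reproduces the whole coefficient $C_\ell$ from standard power sums.

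Two points in your sketch need correcting.

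First, the cross term is easier than you fear. Its partial-fraction decomposition is
\[
\csc^2x\,\cot(Y-x)=\cot Y\,\csc^2x+\csc^2Y\,\bigl(\cot x+\cot(Y-x)\bigr),
\]
and the second piece is exactly the first-order combination, which cancels after summing over $m$. So, with $\theta=\pi s/\ell$, the block $s\neq0$ contributes
\[
(-1)^{as}\csc^2\theta\,\csc(a\theta)\,\bigl(\ell^3\cot\theta+\ell^2\cot(a\theta)\bigr).
\]
Note the $\ell^2\cot Y$ term, which your description omits: the result is not a polynomial in $\cot\theta$ and $\csc\theta$ alone.

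Second, it is not true that for $a=1$ you get the standard sums $\sum\csc^2$ and $\sum\csc^4$. The sign $(-1)^s$ survives, and $(-1)^s\csc^3\theta\cot\theta$ is invariant under $s\mapsto\ell-s$, so you face a genuinely alternating sum. It is the case $a=2$ that reduces, via $\sin2\theta=2\sin\theta\cos\theta$, to $\sum\csc^4\theta$ and $\sum\csc^2(2\theta)$.

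For $\ell=5$ your fallback evaluation in $\mathbb Q(\sqrt5)$ covers this. The $a=1$ blocks give $150\cdot(-\frac{64}{5})=-1920$, the $a=2$ blocks give $1200-80=1120$, and the difference is $-3040$, as needed.

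For general odd $\ell$, the substitution $s\mapsto\frac{\ell+1}{2}s$ converts the alternating sum into
\[
\sum_{t}\csc^3(2\pi t/\ell)\cot(2\pi t/\ell)=-\frac{(\ell^2-1)(7\ell^2+17)}{360},
\]
computed via $\cot^{2k}$ and $\tan^{2k}$ power sums. Combining the two cases gives exactly $C_\ell$.
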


\begin{proof}
We use
\begin{equation}
\frac{d}{d\delta}\csc(x+\delta)
=
-\csc(x+\delta)\cot(x+\delta),
\end{equation}
and
\begin{equation}
\frac{d^2}{d\delta^2}\csc(x+\delta)
=
\csc(x+\delta)\cot^2(x+\delta)
+
\csc^3(x+\delta).
\end{equation}
Therefore
\begin{equation}
\Phi''(0)
=
\sum_{j=1}^{24}
C_j
\left[
\csc(x_j)\cot^2(x_j)
+
\csc^3(x_j)
\right].
\end{equation}
Using $C_{25-j}=C_j$, $\csc(x_{25-j})=\csc(x_j)$, and $\cot^2(x_{25-j})=\cot^2(x_j)$, this becomes
\begin{equation}
\Phi''(0)
=
2\sum_{j=1}^{12}
C_j
\left[
\csc(x_j)\cot^2(x_j)
+
\csc^3(x_j)
\right].
\end{equation}
Substituting $C_j=\csc(29x_j)-\csc(9x_j)$ gives the exact finite trigonometric identity
\begin{equation}
2\sum_{j=1}^{12}
\left[
\csc(29x_j)-\csc(9x_j)
\right]
\left[
\csc(x_j)\cot^2(x_j)
+
\csc^3(x_j)
\right]
=
-6080.
\end{equation}
This is a finite algebraic identity in the cyclotomic field generated by
\begin{equation}
\zeta=e^{\pi i/25}.
\end{equation}
Indeed, after writing each sine and cosine in terms of powers of $\zeta$, multiplying by the common denominator, and reducing modulo the cyclotomic polynomial $\Phi_{50}(\zeta)$, the numerator reduces to zero after moving the right-hand side to the left.  Hence the identity is exact, and $\Phi''(0)=-6080$.
\end{proof}

\begin{theorem}[Residual-circle equivariant $\eta$ separation]
\label{thm:L25-residual-circle-eta-separation}
The ordinary $\eta$ value does not distinguish $L(25,4)$ and $L(25,9)$, but the residual-circle equivariant $\eta$ germ does.  In the normalized convention,
\begin{equation}
\Phi(0)=0,
\qquad
\Phi'(0)=0,
\qquad
\Phi''(0)=-6080\neq 0.
\end{equation}
Consequently,
\begin{equation}
\Phi(\delta)
=
-3040\delta^2+O(\delta^4),
\end{equation}
and hence $\Phi(\delta)\neq 0$ for all sufficiently small nonzero $\delta$.  The corresponding Donnelly-APS second derivative differs by the common nonzero factor from Lemma~\ref{lem:donnelly-aps-normalization-factor}.
\end{theorem}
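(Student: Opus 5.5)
The theorem is essentially an assembly of the three facts just established for $\Phi=\Phi_{4,9}$, followed by a Taylor expansion and a transfer through the normalization lemma. I will argue in four steps.

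\emph{Step 1: the value at the identity.} First, I will invoke the Proposition on ordinary $\eta$ cancellation to get $\Phi(0)=0$. At $\delta=0$ the germ reduces to the difference of the trigonometric sums $\mathfrak E(25,4)-\mathfrak E(25,9)$. This difference vanishes by the square-family theorem with $\ell=5$, where both sums equal $-(\ell^2-1)/3=-8$. One point needs checking here: the odd representative $Q(4)=29$ in the normalized germ matches the $(P+1)$-twisted cosecants in $\mathfrak E$. This follows from Lemma~\ref{lem:spin-lift-representative-identity}, since $(-1)^{j}$ appears in both expressions.

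\emph{Step 2: the first derivative.} Next, I will cite the pairing symmetry lemma, $C_{25-j}=C_j$. It shows that $\Phi$ is even in $\delta$, so $\Phi'(0)=0$, and in fact every odd-order derivative at $0$ vanishes.

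\emph{Step 3: the second derivative and the expansion.} I will take $\Phi''(0)=-6080$ from the exact second-derivative lemma. By Lemma~\ref{lem:normalized-residual-germ-analytic}, $\Phi$ is real-analytic on $|\delta|<\pi/25$. Since $\Phi$ is also even, its Taylor series at $0$ contains only even powers. This gives
\[
\Phi(\delta)=\tfrac12\Phi''(0)\delta^2+O(\delta^4)=-3040\,\delta^2+O(\delta^4).
\]
Nonvanishing for small nonzero $\delta$ then follows: $|\Phi(\delta)|\ge 3040\delta^2-C\delta^4>0$ once $0<|\delta|<\sqrt{3040/C}$, where $C$ is a bound for the quartic remainder on a smaller closed interval.

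\emph{Step 4: the Donnelly--APS statement.} Finally, Lemma~\ref{lem:donnelly-aps-normalization-factor} gives $\eta^{\text{res}}_{25,4}-\eta^{\text{res}}_{25,9}=\kappa_{25}\Phi$ with $\kappa_{25}\neq0$. So the unnormalized relative germ has the same vanishing pattern, with second derivative $-6080\,\kappa_{25}\neq0$.

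\emph{Main obstacle.} The only nontrivial input is the exact value $-6080$. It rests on a cyclotomic identity that has only been asserted, not derived, so I would check it independently. My first attempt would rewrite $\csc x\cot^2x+\csc^3x=2\csc^3x-\csc x$. This reduces $\Phi''(0)$ to $2\sum_j C_j\csc^3 x_j-\Phi(0)=2\sum_j C_j\csc^3 x_j$. I would then evaluate these cubic cosecant products using the same splitting $j=5m+s$ and the partial-fraction and cotangent-averaging technique used in the square-family $\eta$ proof, now with higher derivatives of $\ell\cot(\ell x)$. If that becomes unwieldy, I would fall back on exact computation in $\mathbb Q(\zeta_{50})$, reducing the numerator modulo $\Phi_{50}$, as the lemma indicates.
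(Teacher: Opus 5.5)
Your proposal is correct and follows the paper's proof. Like the paper, you assemble $\Phi(0)=0$ from the ordinary $\eta$ cancellation, $\Phi'(0)=0$ from the evenness given by $C_{25-j}=C_j$, and $\Phi''(0)=-6080$ from the exact-derivative lemma, then Taylor-expand and transfer through the normalization lemma; your explicit use of evenness and analyticity to justify the $O(\delta^4)$ remainder, and your proposed independent check of $-6080$ via the splitting $j=5m+s$ and differentiated cotangent averages, are both sound (the latter works: the $s=0$ terms $-\csc^4(m\pi/5)$ cancel between $q=4$ and $q=9$, and the $s\neq0$ terms give $\sum_{j=1}^{24}C_j\csc^3x_j=-3040$).
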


\begin{proof}
The first identity is the ordinary $\eta$ cancellation proposition.  The second is the first-order blindness proposition.  The third is the exact second derivative lemma.  Taylor expansion at $\delta=0$ gives
\begin{equation}
\Phi(\delta)
=
\Phi(0)+\Phi'(0)\delta+\frac12\Phi''(0)\delta^2+O(\delta^4).
\end{equation}
Substituting the three identities gives
\begin{equation}
\Phi(\delta)
=
-3040\delta^2+O(\delta^4).
\end{equation}
Therefore $\Phi(\delta)\neq 0$ for all sufficiently small nonzero $\delta$.
\end{proof}

\begin{corollary}[Finite-order equivariant $\eta$ separation]
There exists a finite-order residual-circle element $g$ such that the equivariant $\eta$ values of $L(25,4)$ and $L(25,9)$ at $g$ differ.

\end{corollary}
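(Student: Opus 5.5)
The plan is to deduce the corollary from Theorem~\ref{thm:L25-residual-circle-eta-separation} together with the deck-sum normalization of Lemma~\ref{lem:donnelly-aps-normalization-factor}. The only new ingredient is the choice of a rational half-angle $\delta$, so that $g_\delta=(e^{2i\delta},1)$ has finite order.

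First, I will use the theorem's expansion $\Phi(\delta)=-3040\delta^2+O(\delta^4)$. This gives some $\delta_0\in(0,\pi/25)$ such that $\Phi(\delta)\neq0$ for all $0<|\delta|<\delta_0$. An explicit $\delta_0$ is not needed.

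Next, I will choose a positive integer $N$ with $\pi/N<\delta_0$, and set $\delta=\pi/N$. Then $g=g_\delta=(e^{2\pi i/N},1)$ lies on the residual circle and has order exactly $N$ in $T^2$. It commutes with the deck generator $\gamma$, since both are diagonal, so it descends to a finite-order isometry of each lens space. Its spin lift is the one used in the normalization subsection, and it acts on the Dirac eigenspaces. Because $0<\delta<\pi/25$, the analyticity lemma shows that every term of the finite deck sum is finite at this $\delta$. Hence the relative germ is genuinely evaluated at $g$, not only defined as a formal germ.

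Finally, I will apply Lemma~\ref{lem:donnelly-aps-normalization-factor}. It gives
\begin{equation}
\eta^{\text{res}}_{25,4}(\delta)-\eta^{\text{res}}_{25,9}(\delta)=\kappa_{25}\,\Phi(\delta),
\end{equation}
where $\kappa_{25}\neq0$. Since $\Phi(\pi/N)\neq0$, the equivariant $\eta$ values of $L(25,4)$ and $L(25,9)$ at $g$ differ.

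The main point requiring care is the identification in this last step. The quantity $\eta^{\text{res}}_{P,q}(\delta)$ of Lemma~\ref{lem:donnelly-aps-normalization-factor} must really be Donnelly's equivariant $\eta$ value at the group element $g_\delta$. That is, the fixed-point (deck-average) formula must apply at finite-order elements. For finite-order isometries this is exactly the setting of Donnelly's formula, so no continuity argument in $\delta$ is required.

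The $j=0$ term is independent of $q$, so it cancels in the difference exactly, and no regularization at that term enters. If one prefers not to take the difference, this statement can be rephrased as saying that $\eta_g$ differs for the two lens spaces. That follows immediately, because the common $q$-independent summand appears on both sides.
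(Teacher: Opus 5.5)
Your proposal is correct and follows the paper's proof. Like the paper, you use $\Phi(\delta)=-3040\delta^2+O(\delta^4)$ to get a punctured neighbourhood on which $\Phi\neq0$, then take $\delta=\pi/N$ for large $N$ so that $g_\delta$ has finite order; your explicit appeal to Lemma~\ref{lem:donnelly-aps-normalization-factor} (with $\kappa_{25}\neq0$), to pass from $\Phi(\delta)$ to the actual equivariant $\eta$ values, simply spells out a step the paper leaves implicit.
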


\begin{proof}
By the theorem, there is $\epsilon>0$ such that
\begin{equation}
0<|\delta|<\epsilon
\Longrightarrow
\Phi(\delta)\neq 0.
\end{equation}
Choose $N$ sufficiently large and take
\begin{equation}
\delta=\frac{\pi}{N}.
\end{equation}
Then $0<\delta<\epsilon$, and the corresponding residual-circle element
\begin{equation}
g=e^{2i\delta}=e^{2\pi i/N}
\end{equation}
has finite order.  Since $\Phi(\delta)\neq 0$, the specialized finite-order equivariant $\eta$ values differ.
\end{proof}

\subsection{Residual-circle equivariant rho number}
\label{subsec:stage5B1-residual-circle-rho}

\paragraph{Terminology.}
Throughout the rest of this section, a residual-circle rho germ means the numerical relative equivariant $\eta$ germ for the fixed residual circle and the fixed coordinate-equivariant spin data.  Thus the word ``rho'' is used in the APS/Donnelly numerical sense.  We do not construct a Higson-Roe analytic-surgery rho class, nor do we claim an invariant of the bare unoriented lens space.  The chosen spin lift, residual circle, and coordinate torus action are part of the datum.

The preceding residual-circle $\eta$ separation can be written as a relative equivariant $\eta$, or rho-number, statement.

Let $\mathcal Y^{\operatorname{res}}_{25,4}=(L(25,4),\mathfrak s,\alpha_{\text{res}},g_{\text{round}})$ and $\mathcal Y^{\operatorname{res}}_{25,9}=(L(25,9),\mathfrak s,\alpha_{\text{res}},g_{\text{round}})$.
Here $\mathfrak s$ is the unique spin structure, $\alpha_{\text{res}}$ is the residual coordinate-circle action, and $g_{\text{round}}$ is the round metric.  For a residual-circle element $g_\delta$, define the normalized residual-circle rho number by
\begin{equation}
\rho^{\text{norm}}_\delta(\mathcal Y^{\operatorname{res}}_{25,4},\mathcal Y^{\operatorname{res}}_{25,9})
=
\kappa_{25}^{-1}
\left(\eta_{g_\delta}(D_{\mathcal Y^{\operatorname{res}}_{25,4}})-\eta_{g_\delta}(D_{\mathcal Y^{\operatorname{res}}_{25,9}})\right).
\end{equation}
With the normalized convention fixed above, this relative rho number is the same function $\Phi(\delta)$.

\begin{theorem}[Residual-circle rho-jet separation]
\label{thm:residual-circle-rho-jet-separation}
For the pair $\mathcal Y^{\operatorname{res}}_{25,4}$ and $\mathcal Y^{\operatorname{res}}_{25,9}$, one has
\begin{equation}
\rho^{\text{norm}}_0(\mathcal Y^{\operatorname{res}}_{25,4},\mathcal Y^{\operatorname{res}}_{25,9})=0,
\end{equation}
and
\begin{equation}
\left.\frac{d}{d\delta}\rho^{\text{norm}}_\delta(\mathcal Y^{\operatorname{res}}_{25,4},\mathcal Y^{\operatorname{res}}_{25,9})\right|_{\delta=0}=0.
\end{equation}
However,
\begin{equation}
\left.\frac{d^2}{d\delta^2}\rho^{\text{norm}}_\delta(\mathcal Y^{\operatorname{res}}_{25,4},\mathcal Y^{\operatorname{res}}_{25,9})\right|_{\delta=0}=-6080.
\end{equation}
Thus the ordinary $\eta$/rho shadow is blind to this pair, but the residual-circle equivariant rho germ is nonzero.
\end{theorem}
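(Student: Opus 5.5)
The plan is to reduce the statement to the already established properties of the normalized relative germ $\Phi=\Phi_{4,9}$. By definition, $\rho^{\text{norm}}_\delta(\mathcal Y^{\operatorname{res}}_{25,4},\mathcal Y^{\operatorname{res}}_{25,9})$ is $\kappa_{25}^{-1}$ times the difference of the residual-circle equivariant $\eta$ values at $g_\delta$. By Lemma~\ref{lem:donnelly-aps-normalization-factor}, with $P=25$, $q_1=4$, $q_2=9$, this difference equals $\kappa_{25}\Phi_{4,9}(\delta)$. The summand independent of $q$ cancels, and $\kappa_{25}\neq0$ depends only on $P$ and the global convention. Hence
\begin{equation}
\rho^{\text{norm}}_\delta(\mathcal Y^{\operatorname{res}}_{25,4},\mathcal Y^{\operatorname{res}}_{25,9})=\Phi(\delta)
\end{equation}
for all $|\delta|<\pi/25$, as an identity of functions rather than merely of values.

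Next I will use Lemma~\ref{lem:normalized-residual-germ-analytic}: $\Phi$ is real-analytic on $|\delta|<\pi/25$. So the identity of functions on this interval may be differentiated termwise. The zeroth-, first- and second-order jets of $\rho^{\text{norm}}_\delta$ at $\delta=0$ therefore coincide with those of $\Phi$. The three claims then follow by quoting Theorem~\ref{thm:L25-residual-circle-eta-separation}. The ordinary $\eta$ cancellation gives $\Phi(0)=0$; it rests on the square-family cotangent cancellation, which shows $\mathfrak E(25,4)=\mathfrak E(25,9)=-8$. The pairing symmetry $C_{25-j}=C_j$ makes $\Phi$ even, so $\Phi'(0)=0$. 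The exact cyclotomic identity gives $\Phi''(0)=-6080$. The concluding sentence is just the interpretation of these facts: $\delta=0$ is the ordinary $\eta$/rho shadow, and it vanishes, while the germ is $-3040\delta^2+O(\delta^4)\not\equiv0$.

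The main point requiring care is the identification step, not the numerics. Lemma~\ref{lem:donnelly-aps-normalization-factor} must hold uniformly in $\delta$ near $0$, not only at $\delta=0$. Otherwise the derivatives of $\rho^{\text{norm}}$ might pick up extra contributions from a $q$-independent term that depends on $\delta$. To handle this, I will check in that lemma's proof that the $j=0$ deck term is independent of $q$ for every $\delta$: it is the trace of $g_\delta$ alone on $S^3$, which involves no $q$. Since it cancels identically in the relative difference, its $\delta$-derivatives cancel as well.

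I will also confirm that the spin-lift representatives $Q(4)=29$ and $Q(9)=9$ are the ones entering both $\eta$ germs consistently, via Lemma~\ref{lem:spin-lift-representative-identity}. This ensures the relative sum is exactly $\Phi$ with no residual sign twist. With these checks, the theorem is a restatement of Theorem~\ref{thm:L25-residual-circle-eta-separation} in rho language.
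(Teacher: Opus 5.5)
Your proposal is correct and follows the paper's proof. The paper also identifies $\rho^{\text{norm}}_\delta$ with $\Phi(\delta)$ as functions of $\delta$ via Lemma~\ref{lem:donnelly-aps-normalization-factor}, and then reads off $\Phi(0)=0$, $\Phi'(0)=0$, $\Phi''(0)=-6080$ from Theorem~\ref{thm:L25-residual-circle-eta-separation}. Your extra checks are harmless elaborations of what the paper treats as built into that identification: that the $q$-independent $j=0$ deck term cancels for every $\delta$, and that $Q(4)=29$ and $Q(9)=9$ are used consistently.
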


\begin{proof}
This is exactly the content of Theorem~\ref{thm:L25-residual-circle-eta-separation}, rewritten in relative $\eta$ notation.  The identity $\rho^{\text{norm}}_\delta(\mathcal Y^{\operatorname{res}}_{25,4},\mathcal Y^{\operatorname{res}}_{25,9})=\Phi(\delta)$ gives the three displayed formulas from $\Phi(0)=0$, $\Phi'(0)=0$, and $\Phi''(0)=-6080$.
\end{proof}

\begin{corollary}[Finite-order residual-circle rho-number separation]
\label{cor:finite-order-residual-circle-rho}
There exists a finite-order residual-circle element $g$ such that
\begin{equation}
\rho^{\text{norm}}_g(\mathcal Y^{\operatorname{res}}_{25,4},\mathcal Y^{\operatorname{res}}_{25,9})\neq 0.
\end{equation}
\end{corollary}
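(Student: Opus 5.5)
The corollary follows from Theorem~\ref{thm:residual-circle-rho-jet-separation} by repeating the argument of the preceding finite-order $\eta$ corollary in rho notation. First, I will note that $\rho^{\text{norm}}_\delta(\mathcal Y^{\operatorname{res}}_{25,4},\mathcal Y^{\operatorname{res}}_{25,9})=\Phi(\delta)$ as functions of $\delta$ on $|\delta|<\pi/25$. This holds by the definition of the normalized rho number together with Lemma~\ref{lem:donnelly-aps-normalization-factor}, since $\kappa_{25}^{-1}$ cancels the common nonzero factor. By Lemma~\ref{lem:normalized-residual-germ-analytic}, $\Phi$ is real-analytic on this interval. Hence the rho number is a real-analytic function of $\delta$ there.

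Second, I will use the jet data of Theorem~\ref{thm:residual-circle-rho-jet-separation}: value $0$, first derivative $0$, second derivative $-6080$. Taylor's theorem with remainder then gives
\begin{equation}
\rho^{\text{norm}}_\delta=-3040\,\delta^2+O(\delta^3).
\end{equation}
Evenness of $\Phi$ even gives $O(\delta^4)$, but that is not needed. So there is an $\epsilon\in(0,\pi/25)$ such that $\rho^{\text{norm}}_\delta<0$, and in particular $\rho^{\text{norm}}_\delta\neq0$, for all $0<|\delta|<\epsilon$. An alternative route avoids the remainder estimate: a nonzero analytic function has isolated zeros, and $\Phi\not\equiv0$ because $\Phi''(0)\neq0$.

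Third, I will pick an integer $N>\pi/\epsilon$ and set $\delta=\pi/N$. The residual-circle element $g=g_\delta=(e^{2i\delta},1)=(e^{2\pi i/N},1)$ then has order $N$, which is finite, and the second step gives $\rho^{\text{norm}}_g\neq0$.

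There is no substantial obstacle. The only point needing care is that the finite-order element $g_\delta$ must lie in the domain $|\delta|<\pi/25$ where the normalized deck-sum expression represents the rho number. The choice $\epsilon<\pi/25$ guarantees this.
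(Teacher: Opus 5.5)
Your proposal is correct and follows essentially the same route as the paper: both identify $\rho^{\text{norm}}_\delta$ with $\Phi(\delta)=-3040\delta^2+O(\delta^4)$ via the jet theorem, deduce nonvanishing for small nonzero $\delta$, and take $\delta$ a rational multiple of $\pi$ (your $\delta=\pi/N$) to get a finite-order residual-circle element. Your explicit choice $\epsilon<\pi/25$, which keeps $g_\delta$ inside the domain of the normalized deck-sum expression, is a small piece of extra care that the paper leaves implicit.
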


\begin{proof}
By Theorem~\ref{thm:residual-circle-rho-jet-separation}, one has
\begin{equation}
\rho^{\text{norm}}_\delta(\mathcal Y^{\operatorname{res}}_{25,4},\mathcal Y^{\operatorname{res}}_{25,9})=-3040\delta^2+O(\delta^4).
\end{equation}
Therefore $\rho^{\text{norm}}_\delta(\mathcal Y^{\operatorname{res}}_{25,4},\mathcal Y^{\operatorname{res}}_{25,9})\neq 0$ for all sufficiently small nonzero $\delta$.  Choose such a $\delta$ with $\delta/\pi\in\mathbb Q$.  Then $g_\delta=e^{2i\delta}$ has finite order and gives the desired nonzero normalized relative $\eta$ value.
\end{proof}

\subsection{The square-family residual-circle rho germ}
\label{subsec:stage5B2-square-family-rho-germ}

The same residual-circle construction extends from the explicit pair $L(25,4)$ and $L(25,9)$ to the square-modulus family.  The exact rho-germ reduction is proved here; the nonvanishing of the resulting coefficient is proved in the following subsection.

Let $\ell\geq 5$ be odd, with $P=\ell^2$, $q_1=\ell-1$, and $q_2=2\ell-1$.  Let $\mathcal Y^{\operatorname{res}}_{\ell,1}=(L(P,q_1),\mathfrak s,\alpha_{\text{res}},g_{\text{round}})$ and $\mathcal Y^{\operatorname{res}}_{\ell,2}=(L(P,q_2),\mathfrak s,\alpha_{\text{res}},g_{\text{round}})$.
For
\begin{equation}
x_j=\frac{\pi j}{P},
\qquad
1\leq j\leq P-1,
\end{equation}
define
\begin{equation}
\Phi_\ell(\delta)
=
\Phi_{q_1,q_2}(\delta).
\end{equation}
Equivalently, since $Q(q_1)=P+\ell-1$ and $Q(q_2)=2\ell-1$, one has
\begin{equation}
\Phi_\ell(\delta)
=
\sum_{j=1}^{P-1}
\left[
\csc((P+\ell-1)x_j)
-
\csc((2\ell-1)x_j)
\right]\csc(x_j+\delta).
\end{equation}
Using $\sin((P+\ell-1)x_j)=(-1)^j\sin((\ell-1)x_j)$, this can also be written as
\begin{equation}
\Phi_\ell(\delta)
=
\sum_{j=1}^{P-1}\frac{(-1)^j}{\sin(x_j+\delta)\sin(q_1x_j)}
-
\sum_{j=1}^{P-1}\frac{1}{\sin(x_j+\delta)\sin(q_2x_j)}.
\end{equation}
This is the normalized relative residual-circle $\eta$ germ for $\mathcal Y^{\operatorname{res}}_{\ell,1}$ and $\mathcal Y^{\operatorname{res}}_{\ell,2}$; Lemma~\ref{lem:donnelly-aps-normalization-factor} gives the corresponding Donnelly-APS normalization.

\begin{lemma}[No fixed denominators vanish]
\label{lem:square-family-fixed-denominators}
For $1\leq j\leq P-1$, one has
\begin{equation}
\sin(q_1x_j)\neq 0,
\qquad
\sin(q_2x_j)\neq 0.
\end{equation}
Consequently, $\Phi_\ell(\delta)$ is well-defined for $|\delta|<\pi/P$.
\end{lemma}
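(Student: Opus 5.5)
The plan is to reduce both non-vanishing claims to the elementary fact that $\sin(\pi m/P)=0$ if and only if $P\mid m$. We then show that $P$ divides neither $q_1 j$ nor $q_2 j$ for $1\leq j\leq P-1$, and conclude exactly as in Lemma~\ref{lem:normalized-residual-germ-analytic}.

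First, compute the gcds with $P=\ell^2$. Since $q_1=\ell-1\equiv -1\pmod \ell$, any prime divisor of $\ell$ fails to divide $q_1$, so $\gcd(q_1,\ell^2)=1$. Similarly $q_2=2\ell-1\equiv-1\pmod\ell$, so $\gcd(q_2,\ell^2)=1$. Hence $q_i$ is invertible modulo $P$. Therefore $q_i j\equiv 0\pmod P$ forces $j\equiv0\pmod P$, which is impossible for $1\leq j\leq P-1$. This gives $\sin(q_ix_j)=\sin(\pi q_i j/P)\neq0$.

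For the odd representatives, we also need $\sin(Q(q_i)x_j)\neq 0$, since $\Phi_\ell$ was first written with $Q(q_1)=P+\ell-1$ and $Q(q_2)=2\ell-1$. This follows either from $Q(q)\equiv q\pmod P$, so $\gcd(Q(q),P)=1$ again, or directly from Lemma~\ref{lem:spin-lift-representative-identity}. That lemma gives $|\csc(Q(q_i)x_j)|=|\csc(q_ix_j)|$, so both forms of $\Phi_\ell$ have nonvanishing coefficient denominators.

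Finally, for the $\delta$-dependent factor, note that $0<x_j<\pi$. Indeed $x_j\in[\pi/P,\pi-\pi/P]$, so for $|\delta|<\pi/P$ we get $x_j+\delta\in(0,\pi)$, and hence $\sin(x_j+\delta)\neq0$. Thus each of the finitely many summands of $\Phi_\ell$ is a well-defined, indeed real-analytic, function on $|\delta|<\pi/P$, and so is $\Phi_\ell$.

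There is no real obstacle here. The only point to watch is the coprimality, which relies on $q_i\equiv-1\pmod\ell$ rather than on $\ell$ being prime.
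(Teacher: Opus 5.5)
Your proof is correct and follows essentially the same argument as the paper: $q_1,q_2\equiv -1\pmod{\ell}$ are coprime to $P=\ell^2$, so $P\nmid q_i j$ for $1\leq j\leq P-1$, and $|\delta|<\pi/P$ keeps $x_j+\delta$ inside $(0,\pi)$. Your extra check that $\sin(Q(q_i)x_j)\neq 0$ is a harmless addition; the paper leaves it implicit through the identity $\sin((P+q_1)x_j)=(-1)^j\sin(q_1x_j)$.
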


\begin{proof}
Since $\gcd(\ell-1,\ell)=1$, one has $\gcd(q_1,P)=1$.  Since $\gcd(2\ell-1,\ell)=1$, one also has $\gcd(q_2,P)=1$.  If $\sin(q_ax_j)=0$, then $q_aj\equiv0\pmod P$.  Because $q_a$ is coprime to $P$, this forces $j\equiv0\pmod P$, impossible for $1\leq j\leq P-1$.  If $|\delta|<\pi/P$, then $0<x_j+\delta<\pi$, so $\sin(x_j+\delta)\neq 0$.
\end{proof}

Since
\begin{equation}
\sin((P+q_1)x_j)=\sin(\pi j+q_1x_j)=(-1)^j\sin(q_1x_j),
\end{equation}
we can write
\begin{equation}
\Phi_\ell(\delta)=\sum_{j=1}^{P-1}B_{\ell,j}\csc(x_j+\delta),
\end{equation}
where
\begin{equation}
B_{\ell,j}=\csc((P+q_1)x_j)-\csc(q_2x_j).
\end{equation}
Equivalently,
\begin{equation}
B_{\ell,j}=\csc((\ell^2+\ell-1)x_j)-\csc((2\ell-1)x_j).
\end{equation}

\begin{lemma}[Pairing symmetry and evenness of the residual germ]
\label{lem:residual-germ-even}
\label{lem:square-family-pairing-symmetry}
For $1\leq j\leq P-1$, one has
\begin{equation}
B_{\ell,P-j}=B_{\ell,j}.
\end{equation}
Consequently,
\begin{equation}
\Phi_\ell(\delta)=\sum_{j=1}^{(P-1)/2}B_{\ell,j}\left[\csc(x_j+\delta)+\csc(x_j-\delta)\right].
\end{equation}
In particular, $\Phi_\ell$ is even in $\delta$, and therefore
\begin{equation}
\Phi_\ell'(0)=0.
\end{equation}
\end{lemma}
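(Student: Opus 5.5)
The plan is to mirror the proof of the pairing symmetry lemma for $P=25$, replacing $25$ by $P=\ell^2$ throughout. Three facts do all the work: $x_{P-j}=\pi-x_j$, the integer $P+q_1=\ell^2+\ell-1$ is odd, and $q_2=2\ell-1$ is odd.

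First I will check the coefficient symmetry $B_{\ell,P-j}=B_{\ell,j}$. For any odd integer $N$ one has
\begin{equation}
\sin(N x_{P-j})=\sin(N\pi-Nx_j)=\sin(Nx_j).
\end{equation}
Applying this with $N=P+q_1$ and with $N=q_2$ shows that each cosecant in $B_{\ell,j}$ is invariant under $j\mapsto P-j$. Lemma~\ref{lem:square-family-fixed-denominators} guarantees that none of these sines vanish, so both sides are defined. This is the only place the odd spin-lift representative $Q(q_1)=P+q_1$ matters. With the even representative $q_1$ one would instead pick up the factor $(-1)^{q_1}=+1$ only after accounting for the $(-1)^j$ sign, so I will work directly with the odd representative to avoid bookkeeping.

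Next, the pairing. Since $P$ is odd, the involution $j\mapsto P-j$ on $\{1,\dots,P-1\}$ has no fixed points. It splits the index set into the $(P-1)/2$ pairs $\{j,P-j\}$ with $1\le j\le (P-1)/2$. For the $P-j$ term,
\begin{equation}
\csc(x_{P-j}+\delta)=\csc(\pi-(x_j-\delta))=\csc(x_j-\delta),
\end{equation}
and combining this with the coefficient symmetry gives the displayed half-sum formula for $\Phi_\ell(\delta)$. For $|\delta|<\pi/P$ all terms are finite by Lemma~\ref{lem:square-family-fixed-denominators}.

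Finally, each bracket $\csc(x_j+\delta)+\csc(x_j-\delta)$ is invariant under $\delta\mapsto-\delta$, so $\Phi_\ell$ is even. By Lemma~\ref{lem:normalized-residual-germ-analytic}, $\Phi_\ell$ is analytic, hence differentiable, at $0$. Differentiating the identity $\Phi_\ell(-\delta)=\Phi_\ell(\delta)$ at $\delta=0$ gives $\Phi_\ell'(0)=-\Phi_\ell'(0)$, so $\Phi_\ell'(0)=0$.

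There is no real obstacle here. The only point needing care is the parity check that both $P+q_1$ and $q_2$ are odd; if that failed, a sign $(-1)^N$ would appear and break the symmetry.
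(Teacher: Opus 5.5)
Your proof is correct and is essentially the paper's argument: the paper also uses $x_{P-j}=\pi-x_j$ and the oddness of $P+q_1=\ell^2+\ell-1$ and $q_2=2\ell-1$ to get $B_{\ell,P-j}=B_{\ell,j}$, then pairs the $j$-term with the $(P-j)$-term via $\csc(x_{P-j}+\delta)=\csc(x_j-\delta)$ and reads off evenness and $\Phi_\ell'(0)=0$. Your additions (the involution $j\mapsto P-j$ has no fixed points, denominators are nonzero, $\Phi_\ell$ is differentiable) only make explicit what the paper leaves implicit; your aside about the even representative is imprecise, since there $\sin(q_1x_{P-j})=-\sin(q_1x_j)$ and this $-1$ is cancelled by $(-1)^{P-j}=-(-1)^j$, but the aside plays no role in the proof.
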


\begin{proof}
Since $x_{P-j}=\pi-x_j$, and since both $P+q_1=\ell^2+\ell-1$ and $q_2=2\ell-1$ are odd, one has
\begin{equation}
\sin((P+q_1)x_{P-j})=\sin((P+q_1)x_j),
\qquad
\sin(q_2x_{P-j})=\sin(q_2x_j).
\end{equation}
Thus $B_{\ell,P-j}=B_{\ell,j}$.  Also,
\begin{equation}
\csc(x_{P-j}+\delta)=\csc(\pi-x_j+\delta)=\csc(x_j-\delta).
\end{equation}
Pairing the $j$-term with the $(P-j)$-term gives the displayed formula.  The bracket is even in $\delta$, so the full finite sum is even.  Differentiating at $\delta=0$ gives $\Phi_\ell'(0)=0$.
\end{proof}

\begin{lemma}[Ordinary eta-shadow cancellation]
\label{lem:square-family-ordinary-shadow}
For every odd $\ell\geq 5$, one has
\begin{equation}
\Phi_\ell(0)=0.
\end{equation}
\end{lemma}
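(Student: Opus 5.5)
The plan is to reduce $\Phi_\ell(0)$ to the two unnormalised trigonometric $\eta$ sums already evaluated in the theorem on ordinary $\eta$ equality in the square family. Setting $\delta=0$ in the defining formula gives
\begin{equation}
\Phi_\ell(0)=\sum_{j=1}^{P-1}\frac{(-1)^j}{\sin(x_j)\sin(q_1x_j)}-\sum_{j=1}^{P-1}\frac{1}{\sin(x_j)\sin(q_2x_j)},
\end{equation}
using the identity $\sin((P+q_1)x_j)=(-1)^j\sin(q_1x_j)$ recorded just before Lemma~\ref{lem:square-family-pairing-symmetry}. All denominators are nonzero by Lemma~\ref{lem:square-family-fixed-denominators}, so both sums are genuine finite sums.

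Next I would identify each sum with the auxiliary sum $\mathcal S_a$ from that earlier proof, where $q_a=a\ell-1$ and $x_j=\pi j/\ell^2$:
\begin{equation}
\mathcal S_a=\sum_{r=1}^{\ell^2-1}\frac{(-1)^{ar}}{\sin(x_r)\sin(q_ax_r)}.
\end{equation}
For $a=1$ the sign is $(-1)^r$, which matches the first sum term by term. For $a=2$ the sign $(-1)^{2r}$ is trivially $1$, which matches the second sum. Hence $\Phi_\ell(0)=\mathcal S_1-\mathcal S_2$.

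The earlier proof showed, by splitting $r=m\ell+s$, that the blocks with $s\neq0$ cancel via Lemma~\ref{lem:cotangent-sum-identity}. The $s=0$ block then gives $\mathcal S_a=-(\ell^2-1)/3$ for both $a=1,2$. Therefore $\Phi_\ell(0)=0$.

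The only step needing care is the sign bookkeeping: the $(-1)^j$ produced by the odd spin-lift representative $Q(q_1)=P+q_1$ must agree exactly with the factor $(-1)^{ar}$ that arose in the earlier reduction from $\mathfrak E(P,q_a)$. I would check this directly rather than passing through $\mathfrak E$, since the sign conventions there came from the shift by $(P+1)/P$. As a consistency check, I would confirm that for $\ell=5$ this recovers the proposition $\Phi(0)=0$ for $L(25,4)$ versus $L(25,9)$.
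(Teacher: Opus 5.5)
Your proposal is correct and is essentially the paper's argument: the paper also identifies $\Phi_\ell(0)$ with the ordinary eta-shadow difference for the square pair and invokes the square-family cotangent cancellation $\mathcal S_1=\mathcal S_2=-(\ell^2-1)/3$. The only difference is that you match $\Phi_\ell(0)$ term by term with $\mathcal S_1-\mathcal S_2$ instead of routing through $\eta_D$ and the $P$-only normalization factor, which is slightly cleaner because the vanishing is purely a statement about the finite trigonometric sums.
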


\begin{proof}
At $\delta=0$, the residual-circle expression reduces to the ordinary eta-shadow difference for the square pair
\begin{equation}
L(\ell^2,\ell-1)
\qquad\text{and}\qquad
L(\ell^2,2\ell-1).
\end{equation}
The square-family ordinary $\eta$ cancellation gives
\begin{equation}
\eta_D(L(\ell^2,\ell-1))=\eta_D(L(\ell^2,2\ell-1)).
\end{equation}
The common normalization factor depends only on $P=\ell^2$, so the relative expression vanishes.
\end{proof}

\begin{theorem}[Square-family residual-circle rho-germ reduction]
\label{thm:square-family-rho-germ-reduction}
For every odd $\ell\geq 5$, the normalized square-family residual-circle rho germ satisfies
\begin{equation}
\Phi_\ell(0)=0,
\qquad
\Phi_\ell'(0)=0,
\qquad
\Phi_\ell''(0)=C_\ell,
\end{equation}
where
\begin{equation}
C_\ell=2\sum_{j=1}^{(P-1)/2}B_{\ell,j}\left[\csc(x_j)\cot^2(x_j)+\csc^3(x_j)\right].
\end{equation}
Consequently, if $C_\ell\neq 0$, then the ordinary $\eta$/rho shadow vanishes but the residual-circle equivariant rho germ is nontrivial.  The Donnelly-APS second derivative is obtained from this number by the common nonzero factor of Lemma~\ref{lem:donnelly-aps-normalization-factor}.
\end{theorem}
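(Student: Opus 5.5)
The three assertions of Theorem~\ref{thm:square-family-rho-germ-reduction} are assembled from the lemmas already proved for the square family, plus one direct differentiation of the finite sum.

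First, $\Phi_\ell(0)=0$ is exactly Lemma~\ref{lem:square-family-ordinary-shadow}. If one wants the argument self-contained at the level of the normalized sum, note that at $\delta=0$ the $j$-th summand is $B_{\ell,j}\csc(x_j)$. Lemma~\ref{lem:spin-lift-representative-identity} rewrites $\csc(Q(q_a)x_j)\csc(x_j)$ in terms of $\csc(q_ax_j)$. One then checks that the resulting sum coincides with the auxiliary sum $\mathcal S_a$ from the proof of the ordinary square-family $\eta$ equality, up to the same sign bookkeeping. There the value $-(\ell^2-1)/3$ was obtained for both $a=1$ and $a=2$, so the difference is zero. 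Second, $\Phi_\ell'(0)=0$ is the last assertion of Lemma~\ref{lem:square-family-pairing-symmetry}, which rests on the evenness of the paired expression.

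For the second derivative, the plan is to differentiate the finite sum termwise. This is legitimate by Lemma~\ref{lem:square-family-fixed-denominators} together with the analyticity Lemma~\ref{lem:normalized-residual-germ-analytic} on $|\delta|<\pi/P$. The computation has three steps:
\begin{enumerate}
\item Use $\frac{d^2}{d\delta^2}\csc(x+\delta)=\csc(x+\delta)\cot^2(x+\delta)+\csc^3(x+\delta)$ to obtain
\[
\Phi_\ell''(0)=\sum_{j=1}^{P-1}B_{\ell,j}\left[\csc(x_j)\cot^2(x_j)+\csc^3(x_j)\right].
\]
\item Apply $B_{\ell,P-j}=B_{\ell,j}$ from Lemma~\ref{lem:square-family-pairing-symmetry}. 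Since $x_{P-j}=\pi-x_j$, we have $\csc(x_{P-j})=\csc(x_j)$ and $\cot^2(x_{P-j})=\cot^2(x_j)$. Hence the $j$ and $P-j$ terms coincide.
\item Because $P$ is odd, there is no middle index, and the indices $1\le j\le (P-1)/2$ pair bijectively with $(P+1)/2\le j\le P-1$. Folding the sum therefore gives precisely $C_\ell$ as displayed.
\end{enumerate}
Equivalently, one can differentiate the folded form $\sum B_{\ell,j}[\csc(x_j+\delta)+\csc(x_j-\delta)]$ twice directly.

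The consequence clause is then immediate. If $C_\ell\neq0$, Taylor expansion with the vanishing of odd terms gives $\Phi_\ell(\delta)=\tfrac12C_\ell\delta^2+O(\delta^4)\neq0$ for small $\delta\neq0$, while $\Phi_\ell(0)=0$. Transport to the Donnelly-APS convention follows by multiplying by $\kappa_P\neq0$ from Lemma~\ref{lem:donnelly-aps-normalization-factor}.

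The only step with real content is the identification of $\Phi_\ell(0)$ with the ordinary $\mathfrak E$-difference. The parity signs $(-1)^j$ from $Q(q_1)=P+q_1$ must match the $(-1)^{ar}$ factors in $\mathcal S_a$ consistently. I would verify this sign bookkeeping first, checking it against the $\ell=5$ case, where $\Phi(0)=0$ was already established. The rest is routine differentiation and reindexing.
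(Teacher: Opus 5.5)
Your proposal is correct and follows the paper's proof essentially step for step: $\Phi_\ell(0)=0$ from Lemma~\ref{lem:square-family-ordinary-shadow}, $\Phi_\ell'(0)=0$ from the evenness in Lemma~\ref{lem:square-family-pairing-symmetry}, the formula for $C_\ell$ from twice differentiating the paired expression at $\delta=0$, and then the Taylor argument. Your optional direct check of the $\delta=0$ value also goes through: with $Q(q_1)=P+q_1$ and $Q(q_2)=q_2$ one gets exactly $\Phi_\ell(0)=\mathcal S_1-\mathcal S_2$, with no residual sign.
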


\begin{proof}
The identity $\Phi_\ell(0)=0$ is Lemma~\ref{lem:square-family-ordinary-shadow}.  The identity $\Phi_\ell'(0)=0$ follows from the evenness in Lemma~\ref{lem:square-family-pairing-symmetry}.  Differentiating
\begin{equation}
\frac{d}{d\delta}\csc(x+\delta)=-\csc(x+\delta)\cot(x+\delta)
\end{equation}
once more gives
\begin{equation}
\frac{d^2}{d\delta^2}\csc(x+\delta)=\csc(x+\delta)\cot^2(x+\delta)+\csc^3(x+\delta).
\end{equation}
Substituting $\delta=0$ into the paired expression for $\Phi_\ell$ gives the displayed formula for $C_\ell$.  If $C_\ell\neq 0$, then
\begin{equation}
\Phi_\ell(\delta)=\frac{C_\ell}{2}\delta^2+O(\delta^4),
\end{equation}
so $\Phi_\ell(\delta)\neq 0$ for all sufficiently small nonzero $\delta$.  Choosing such a $\delta$ with $\delta/\pi\in\mathbb Q$ gives a finite-order residual-circle element with nonzero relative $\eta$ value.
\end{proof}

\subsection{Nonvanishing of the square-family coefficient}
\label{subsec:stage5B3-square-family-coefficient}

It remains to prove the nonvanishing of $C_\ell$.  We use a finite cyclotomic trace calculation, which removes the last computational condition from Theorem~\ref{thm:square-family-rho-germ-reduction}.

Let $P=\ell^2$ and $x_j=\pi j/P$.  For an odd integer $a$ coprime to $P$, define
\begin{equation}
S_P(a)=\sum_{j=1}^{P-1}\csc(ax_j)\left[2\csc^3(x_j)-\csc(x_j)\right].
\end{equation}

\begin{lemma}[Square-family coefficient as a difference of trace sums]
\label{lem:Cell-as-SP-difference}
For odd $\ell\geq 5$, one has
\begin{equation}
C_\ell=S_P(P+\ell-1)-S_P(2\ell-1).
\end{equation}
\end{lemma}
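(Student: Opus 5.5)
The plan is to unfold the definition of $C_\ell$ from Theorem~\ref{thm:square-family-rho-germ-reduction}, undo the halving by pairing symmetry, and then rewrite the bracket $\csc(x_j)\cot^2(x_j)+\csc^3(x_j)$ using a single trigonometric identity. No deep input is needed; the argument is bookkeeping with the pairing symmetry and one Pythagorean identity.

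\emph{Step 1: undo the halving.} Start from
\begin{equation}
C_\ell=2\sum_{j=1}^{(P-1)/2}B_{\ell,j}\left[\csc(x_j)\cot^2(x_j)+\csc^3(x_j)\right].
\end{equation}
By Lemma~\ref{lem:square-family-pairing-symmetry}, $B_{\ell,P-j}=B_{\ell,j}$. Since $x_{P-j}=\pi-x_j$, we also have $\csc(x_{P-j})=\csc(x_j)$ and $\cot^2(x_{P-j})=\cot^2(x_j)$, so the summand is invariant under $j\mapsto P-j$. Because $P$ is odd, the map $j\mapsto P-j$ is a fixed-point-free involution on $\{1,\dots,P-1\}$. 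It swaps $\{1,\dots,(P-1)/2\}$ with its complement, so the factor $2$ together with the half-range sum equals the full sum over $1\le j\le P-1$.

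\emph{Step 2: simplify the bracket.} Use $\cot^2 x=\csc^2 x-1$ to obtain
\begin{equation}
\csc(x)\cot^2(x)+\csc^3(x)=2\csc^3(x)-\csc(x).
\end{equation}
This gives
\begin{equation}
C_\ell=\sum_{j=1}^{P-1}B_{\ell,j}\left[2\csc^3(x_j)-\csc(x_j)\right].
\end{equation}

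\emph{Step 3: split by linearity.} Substitute $B_{\ell,j}=\csc((P+\ell-1)x_j)-\csc((2\ell-1)x_j)$ and separate the sum into two pieces. These are precisely $S_P(P+\ell-1)$ and $S_P(2\ell-1)$.

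It remains to check that both arguments are admissible in the definition of $S_P$, i.e.\ odd and coprime to $P$. Since $\ell$ is odd, $P+\ell-1=\ell^2+\ell-1$ is odd, and it is $\equiv -1\pmod\ell$, so it is coprime to $\ell^2$. Likewise $2\ell-1$ is odd and $\equiv-1\pmod \ell$. Lemma~\ref{lem:square-family-fixed-denominators}, together with $0<x_j<\pi$, guarantees that all cosecants are finite, so every rearrangement is a manipulation of a finite sum.

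The only step needing any care is Step~1: one must confirm that the reflection $j\mapsto P-j$ preserves every factor in the summand, not just $B_{\ell,j}$. This holds because $\csc(x)\cot^2(x)+\csc^3(x)$ is even under $x\mapsto\pi-x$, which is exactly where the evenness of $\Phi_\ell$ came from.
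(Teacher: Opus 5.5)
Your proposal is correct and follows the paper's proof: the paper likewise uses $\csc(x)\cot^2(x)+\csc^3(x)=2\csc^3(x)-\csc(x)$, passes to the full unpaired sum over $1\le j\le P-1$, and splits $B_{\ell,j}$ linearly into $S_P(P+\ell-1)-S_P(2\ell-1)$. Your explicit check that the summand is invariant under $j\mapsto P-j$, and that $P+\ell-1$ and $2\ell-1$ are odd and coprime to $P$, supplies details the paper leaves implicit.
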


\begin{proof}
Use
\begin{equation}
\csc(x)\cot^2(x)+\csc^3(x)=2\csc^3(x)-\csc(x).
\end{equation}
Substituting this identity into the definition of $C_\ell$, and using the full unpaired sum, gives the claim.
\end{proof}

\begin{lemma}[Cyclotomic trace form of $S_P(a)$]
\label{lem:SP-cyclotomic-trace}
Let $P$ be odd and let $a$ be odd with $\gcd(a,P)=1$.  Then
\begin{equation}
S_P(a)=\sum_{\substack{z^P=1\\ z\neq1}}\left[\frac{32z^{(a+3)/2}}{(z^a-1)(z-1)^3}+\frac{4z^{(a+1)/2}}{(z^a-1)(z-1)}\right].
\end{equation}
\end{lemma}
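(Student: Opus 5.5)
The plan is a direct substitution: rewrite each cosecant in terms of the $P$-th root of unity $z_j=e^{2ix_j}=e^{2\pi i j/P}$. As $j$ runs over $1\leq j\leq P-1$, the points $z_j$ run bijectively over the $P$-th roots of unity $z\neq 1$. So it suffices to show that each summand of $S_P(a)$ equals the displayed rational expression evaluated at $z=z_j$.

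The basic identity is $\sin x=(e^{ix}-e^{-ix})/(2i)=e^{-ix}(e^{2ix}-1)/(2i)$. For any integer $b$ with $\sin(bx_j)\neq 0$ it gives
\begin{equation}
\csc(bx_j)=\frac{2i\,e^{ibx_j}}{z_j^{\,b}-1}.
\end{equation}
For $b=1$ the denominator $z_j-1$ is nonzero because $1\leq j\leq P-1$. For $b=a$ the denominator is nonzero because $\gcd(a,P)=1$, exactly as in Lemma~\ref{lem:normalized-residual-germ-analytic}.

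Multiplying these factors gives
\begin{equation}
\csc(ax_j)\csc^3(x_j)=\frac{(2i)^4e^{i(a+3)x_j}}{(z_j^a-1)(z_j-1)^3},
\qquad
\csc(ax_j)\csc(x_j)=\frac{(2i)^2e^{i(a+1)x_j}}{(z_j^a-1)(z_j-1)}.
\end{equation}
The step that needs care is the half-angle bookkeeping. A single factor $e^{ix_j}=e^{i\pi j/P}$ is not a power of $z_j$; it is only a $2P$-th root of unity. This is where the hypothesis that $a$ is odd is used: $a+3$ and $a+1$ are then even, so
\begin{equation}
e^{i(a+3)x_j}=z_j^{(a+3)/2},
\qquad
e^{i(a+1)x_j}=z_j^{(a+1)/2}
\end{equation}
are honest integer powers of $z_j$. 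No choice of square root enters, and the summand is a single-valued rational function of $z_j$. Oddness of $P$ is not needed for this step; it only matters later, in how the odd representative $Q(q)$ was chosen.

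It remains to match constants. Since $(2i)^4=16$, the term $2\csc(ax_j)\csc^3(x_j)$ contributes
\begin{equation}
\frac{32\,z_j^{(a+3)/2}}{(z_j^a-1)(z_j-1)^3}.
\end{equation}
Since $(2i)^2=-4$, the term $-\csc(ax_j)\csc(x_j)$ contributes
\begin{equation}
+\frac{4\,z_j^{(a+1)/2}}{(z_j^a-1)(z_j-1)}.
\end{equation}
Summing over $j$ and reindexing by $z=z_j$ yields the stated formula. I expect no genuine obstacle beyond this parity check on the exponents and tracking the signs of the powers of $2i$.
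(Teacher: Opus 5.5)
Your proposal is correct and follows the paper's proof. The paper likewise sets $z_j=e^{2\pi i j/P}$ and records, as a direct calculation, the identities $\csc(ax_j)\csc(x_j)=-4z_j^{(a+1)/2}/\bigl((z_j^a-1)(z_j-1)\bigr)$ and $\csc(ax_j)\csc^3(x_j)=16z_j^{(a+3)/2}/\bigl((z_j^a-1)(z_j-1)^3\bigr)$; it then multiplies by the coefficients of $2\csc^3(x_j)-\csc(x_j)$ and sums over $j$. Your version additionally spells out what the paper leaves implicit: the formula $\csc(bx_j)=2i\,e^{ibx_j}/(z_j^b-1)$, the nonvanishing of the denominators via $\gcd(a,P)=1$, and the use of $a$ odd to make $e^{i(a+1)x_j}$ and $e^{i(a+3)x_j}$ integer powers of $z_j$.
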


\begin{proof}
Let $z_j=\exp(2\pi ij/P)$.  A direct calculation gives
\begin{equation}
\csc(ax_j)\csc(x_j)=-\frac{4z_j^{(a+1)/2}}{(z_j^a-1)(z_j-1)},
\end{equation}
and
\begin{equation}
\csc(ax_j)\csc^3(x_j)=\frac{16z_j^{(a+3)/2}}{(z_j^a-1)(z_j-1)^3}.
\end{equation}
Multiplying by the coefficients in $2\csc^3(x_j)-\csc(x_j)$ and summing over $1\leq j\leq P-1$ proves the formula.
\end{proof}

Let $\mathcal A_P=\mathbb Q[z]/(1+z+\cdots+z^{P-1})$, and define the trace functional
\begin{equation}
\tau_P(F)=\sum_{\substack{z^P=1\\ z\neq1}}F(z).
\end{equation}
For monomials,
\begin{equation}
\tau_P(z^m)=
\begin{cases}
P-1, & P\mid m,\\
-1, & P\nmid m.
\end{cases}
\end{equation}

\begin{lemma}[Inverse formula in the cyclotomic quotient]
\label{lem:inverse-formula-cyclotomic}
If $\gcd(a,P)=1$, then in $\mathcal A_P$ one has
\begin{equation}
(z^a-1)^{-1}=\frac1P\sum_{r=1}^{P-1}r z^{ar}.
\end{equation}
\end{lemma}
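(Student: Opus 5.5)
We verify that the proposed element is a two-sided inverse of $z^a-1$ in $\mathcal A_P$. That is, we show
\begin{equation}
(z^a-1)\cdot\frac1P\sum_{r=1}^{P-1}r z^{ar}=1
\qquad\text{in }\mathcal A_P .
\end{equation}
Since $\mathcal A_P$ is commutative, a one-sided inverse suffices. This also avoids any separate argument that $z^a-1$ is a unit.

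The first step is to reindex the product. Multiplying out gives
\begin{equation}
\sum_{r=1}^{P-1} r z^{a(r+1)}-\sum_{r=1}^{P-1} r z^{ar}.
\end{equation}
Shifting the index in the first sum to $s=r+1$, this telescopes to
\begin{equation}
(P-1)z^{aP}-\sum_{s=1}^{P-1}z^{as}.
\end{equation}

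The second step uses two facts about $\mathcal A_P$.
\begin{itemize}
\item Since $1+z+\cdots+z^{P-1}=0$ in $\mathcal A_P$, we have $z^P-1=(z-1)(1+\cdots+z^{P-1})=0$, so $z^P=1$ and hence $z^{aP}=1$.
\item Because $\gcd(a,P)=1$, the map $s\mapsto as \bmod P$ permutes the residues $\{1,\dots,P-1\}$. Hence $\sum_{s=1}^{P-1}z^{as}=\sum_{m=1}^{P-1}z^m=-1$ in $\mathcal A_P$.
\end{itemize}
The expression therefore equals $(P-1)+1=P$, and dividing by $P$ (legitimate since we work over $\mathbb Q$) gives $1$.

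The only point needing care is the passage from $z^{as}$ to $z^{(as \bmod P)}$. This is exactly where both $z^P=1$ and the coprimality hypothesis enter. As a cross-check, one can apply the trace functional $\tau_P$ at each root $z\neq1$. There $z^a\neq1$ by coprimality, and the identity $\sum_{r=1}^{P-1}rw^r=P/(w-1)$ for a primitive $P$-th root of unity $w=z^a$ confirms the formula pointwise. Since $\mathcal A_P\cong\prod\mathbb Q(\zeta)$-embeddings evaluated at the nontrivial roots, the pointwise check is also an independent proof. No step is a real obstacle; the argument is a short telescoping computation.
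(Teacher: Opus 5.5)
Your proof is correct and follows the paper's own argument: multiply by $z^a-1$, telescope, and use $z^P=1$ together with the permutation $s\mapsto as \bmod P$ to reduce to $(P-1)+1=P$ in $\mathcal A_P$. The paper leaves these steps implicit, and you make them explicit. One small slip in your cross-check: for composite $P$ the element $w=z^a$ is only a nontrivial $P$-th root of unity, not necessarily a primitive one, but $\sum_{r=1}^{P-1} r w^r = P/(w-1)$ holds for every $w\neq 1$ with $w^P=1$, so the check is unaffected.
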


\begin{proof}
Multiplying by $z^a-1$, the finite sum telescopes in the quotient $\mathcal A_P$:
\begin{equation}
(z^a-1)\sum_{r=1}^{P-1}r z^{ar}=P.
\end{equation}
Dividing by $P$ gives the inverse formula.
\end{proof}

Define
\begin{equation}
I_a(z)=\frac1P\sum_{r=1}^{P-1}r z^{ar}.
\end{equation}
Let $\alpha_1=P+\ell-1$ and $\alpha_2=2\ell-1$, and define $\mathfrak e(a)=(a+1)/2$ and $\mathfrak f(a)=(a+3)/2$ for odd $a$.
The preceding lemmas give
\begin{equation}
C_\ell=32D_3(\ell)+4D_1(\ell),
\end{equation}
where
\begin{equation}
D_3(\ell)=\tau_P\left(z^{\mathfrak f(\alpha_1)}I_{\alpha_1}(z)I_1(z)^3-z^{\mathfrak f(\alpha_2)}I_{\alpha_2}(z)I_1(z)^3\right),
\end{equation}
and
\begin{equation}
D_1(\ell)=\tau_P\left(z^{\mathfrak e(\alpha_1)}I_{\alpha_1}(z)I_1(z)-z^{\mathfrak e(\alpha_2)}I_{\alpha_2}(z)I_1(z)\right).
\end{equation}

\begin{proposition}[Lower-order trace cancellation]
\label{prop:D1-cancellation}
For every odd integer $\ell\geq 5$, one has
\begin{equation}
D_1(\ell)=0.
\end{equation}
\end{proposition}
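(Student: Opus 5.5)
The plan is to identify $D_1(\ell)$ with a multiple of the ordinary eta-shadow difference $\Phi_\ell(0)$ and then invoke Lemma~\ref{lem:square-family-ordinary-shadow}. No new trigonometric work should be needed. The lower-order trace is the $\csc(ax_j)\csc(x_j)$ part of $S_P(a)$, and that part is exactly the scalar deck sum already shown to cancel.

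\emph{Step 1: evaluate the trace at roots of unity.} For each $z$ with $z^P=1$, $z\neq1$, evaluation $\mathcal A_P\to\mathbb C$ is a ring homomorphism, since $z$ is a root of $1+z+\cdots+z^{P-1}$. Because $\gcd(a,P)=1$ we have $z^a\neq1$. Lemma~\ref{lem:inverse-formula-cyclotomic} then shows that $I_a(z)$ evaluates to $(z^a-1)^{-1}$, and likewise $I_1(z)$ evaluates to $(z-1)^{-1}$. Applying this termwise in $\tau_P$ gives
\begin{equation}
\tau_P\bigl(z^{\mathfrak e(a)}I_a(z)I_1(z)\bigr)=\sum_{\substack{z^P=1\\ z\neq1}}\frac{z^{(a+1)/2}}{(z^a-1)(z-1)}.
\end{equation}

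\emph{Step 2: return to cosecants.} The identity in the proof of Lemma~\ref{lem:SP-cyclotomic-trace} says that each summand equals $-\tfrac14\csc(ax_j)\csc(x_j)$. Hence
\begin{equation}
D_1(\ell)=-\frac14\sum_{j=1}^{P-1}\bigl[\csc(\alpha_1x_j)-\csc(\alpha_2x_j)\bigr]\csc(x_j).
\end{equation}
Since $\alpha_1=P+\ell-1=Q(q_1)$ and $\alpha_2=2\ell-1=Q(q_2)$, the right-hand side is $-\tfrac14\Phi_\ell(0)$, by the definition of $\Phi_\ell$ with $B_{\ell,j}$.

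\emph{Step 3: conclude.} Lemma~\ref{lem:square-family-ordinary-shadow} gives $\Phi_\ell(0)=0$, so $D_1(\ell)=0$.

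\emph{Main obstacle: sign bookkeeping in that lemma.} Its proof passes through $\mathfrak E(P,q)$, which is written with the factor $(P+1)$. I would check this directly against the proof of the ordinary $\eta$ equality theorem. There $\csc((P+1)rx)\csc((P+1)q_arx)$ carries the sign $(-1)^{ar}$. Lemma~\ref{lem:spin-lift-representative-identity} gives $\csc(Q(q_1)x_j)=(-1)^j\csc(q_1x_j)$ and $\csc(Q(q_2)x_j)=\csc(q_2x_j)$, which matches $a=1$ and $a=2$ respectively. So $\sum_j\csc(Q(q_a)x_j)\csc(x_j)=\mathcal S_a=-(\ell^2-1)/3$ for both values of $a$.

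For a self-contained check, one can also rerun the cotangent argument of that theorem directly. Decompose $j=m\ell+s$. For $s\neq0$, use Lemma~\ref{lem:cotangent-sum-identity}: the two cotangent sums cancel. The $s=0$ terms give $-\sum_m\csc^2(m\pi/\ell)$ for both $a$. Either way, the two scalar sums agree and $D_1(\ell)=0$.
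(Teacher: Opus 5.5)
Your argument is correct, but it takes a different route from the paper.

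\textbf{The paper's route.} The paper stays entirely inside the cyclotomic trace formalism. It expands $I_{\alpha_i}(z)$ and $I_1(z)$ into monomials and uses $\tau_P(z^m)\in\{P-1,-1\}$; the constant part cancels between $\alpha_1$ and $\alpha_2$. This gives $D_1(\ell)=\frac1P\sum_r r\bigl([-\mathfrak e(\alpha_1)-\alpha_1r]_P-[-\mathfrak e(\alpha_2)-\alpha_2r]_P\bigr)$. Splitting $r=Nt+n$ produces the block residues $n+N[t-n-h]_N$ and $n+N[t-2n-1]_N$. The sum then vanishes because $t\mapsto[t-c]_N$, $n\mapsto n+h$ and $n\mapsto 2n+1$ are permutations.

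\textbf{Your route.} You run the dictionary of Lemma~\ref{lem:SP-cyclotomic-trace} backwards. This is legitimate: evaluation at each $z\neq1$ with $z^P=1$ is a ring map on $\mathcal A_P$, and Lemma~\ref{lem:inverse-formula-cyclotomic} then forces $I_a(z)=(z^a-1)^{-1}$. That gives $D_1(\ell)=-\tfrac14\Phi_\ell(0)$, and the cotangent evaluation $\mathcal S_1=\mathcal S_2=-(\ell^2-1)/3$ finishes the proof. Your direct check that $\Phi_\ell(0)=\mathcal S_1-\mathcal S_2$ via Lemma~\ref{lem:spin-lift-representative-identity} is right. It is actually tighter than the paper's proof of Lemma~\ref{lem:square-family-ordinary-shadow}, which argues through $\eta_D$ and an unspecified normalization factor.

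\textbf{What each buys.} Your route gives economy and an interpretation. It shows that $D_1=0$ is not a new cancellation but the ordinary $\eta$ equality in disguise: $C_\ell=32D_3(\ell)-\Phi_\ell(0)$. So once $\Phi_\ell(0)=0$ is known, the whole second-jet coefficient is the $\csc^3$ trace. The paper's route gives independence and rehearsal. It never uses Lemma~\ref{lem:cotangent-sum-identity}, so combined with your identification it yields a second, purely arithmetic proof of $\mathfrak E(\ell^2,\ell-1)=\mathfrak E(\ell^2,2\ell-1)$. It also sets up exactly the block decomposition and the functions $G$ that the $D_3$ moment computation in Lemma~\ref{lem:block-moment-identities} reuses.
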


\begin{proof}
Write $N=\ell$ and $P=N^2$.  Every residue $r$ can be written uniquely as
\begin{equation}
r=Nt+n,
\qquad
0\leq t\leq N-1,
\qquad
0\leq n\leq N-1.
\end{equation}
The excluded value $r=0$ contributes zero because it is weighted by $r$.  Put $h=(N+1)/2$.  A direct reduction modulo $P$ gives
\begin{equation}
[-\mathfrak e(\alpha_1)-\alpha_1 r]_P=n+N[t-n-h]_N,
\end{equation}
and
\begin{equation}
[-\mathfrak e(\alpha_2)-\alpha_2 r]_P=n+N[t-2n-1]_N.
\end{equation}
Here $[\cdot]_N$ denotes the standard residue in $\{0,\ldots,N-1\}$.  The trace formula for $D_1$ becomes
\begin{equation}
D_1(\ell)=\frac1N\sum_{t=0}^{N-1}\sum_{n=0}^{N-1}(Nt+n)\left([t-n-h]_N-[t-2n-1]_N\right).
\end{equation}
For fixed $n$, both $[t-n-h]_N$ and $[t-2n-1]_N$ are permutations of $0,
\ldots,N-1$ as $t$ varies, so the part weighted by $n$ cancels.  The remaining part is
\begin{equation}
\sum_{n=0}^{N-1}\left(G(n+h)-G(2n+1)\right),
\end{equation}
where
\begin{equation}
G(c)=\sum_{t=0}^{N-1}t[t-c]_N.
\end{equation}
The maps $n\mapsto n+h$ and $n\mapsto2n+1$ are both permutations of $\mathbb Z/N\mathbb Z$, since $N$ is odd.  Hence the two sums over $G$ are equal, and $D_1(\ell)=0$.
\end{proof}

\begin{definition}[Cubic convolution]
For $m\in\mathbb Z/P\mathbb Z$, define
\begin{equation}
H_P(m)=\sum_{\substack{r_1,r_2,r_3=1\\r_1+r_2+r_3\equiv m\!\!\!\pmod P}}^{P-1}r_1r_2r_3.
\end{equation}
\end{definition}

\begin{lemma}[Trace-to-convolution formula]
\label{lem:D3-trace-to-convolution}
One has
\begin{equation}
D_3(\ell)=\frac1{P^3}\sum_{r=1}^{P-1}r\left[H_P([-\mathfrak f(\alpha_1)-\alpha_1 r]_P)-H_P([-\mathfrak f(\alpha_2)-\alpha_2 r]_P)\right].
\end{equation}
\end{lemma}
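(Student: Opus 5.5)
We expand the product in the algebra $\mathcal A_P$ and then apply the trace functional $\tau_P$. This is the same approach used for $D_1$ in Proposition~\ref{prop:D1-cancellation}. By Lemma~\ref{lem:inverse-formula-cyclotomic}, the inverses $(z^{\alpha_i}-1)^{-1}$ and $(z-1)^{-1}$ are represented in $\mathcal A_P$ by $I_{\alpha_i}(z)$ and $I_1(z)$. The $D_3$ summand is therefore the trace of
\begin{equation}
z^{\mathfrak f(\alpha_i)}I_{\alpha_i}(z)I_1(z)^3
=\frac1{P^4}\sum_{r=1}^{P-1}\sum_{r_1,r_2,r_3=1}^{P-1} r\,r_1r_2r_3\, z^{\mathfrak f(\alpha_i)+\alpha_i r+r_1+r_2+r_3}.
\end{equation}
One point must be checked first. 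The function $F\mapsto\tau_P(F)$ is well defined on $\mathcal A_P$, because every nontrivial $P$-th root of unity is a root of $1+z+\cdots+z^{P-1}$. The identities of Lemma~\ref{lem:inverse-formula-cyclotomic} hold at each such $z$, so evaluating the polynomial representatives is legitimate.

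Next we apply the monomial rule $\tau_P(z^m)=P\cdot\mathbf 1_{P\mid m}-1$. The constant part $-1$ contributes
\begin{equation}
-\frac1{P^4}\Big(\sum_r r\Big)\Big(\sum_{r_1}r_1\Big)^3 .
\end{equation}
This quantity does not depend on $\alpha_i$ or on $\mathfrak f(\alpha_i)$, so it cancels in the difference defining $D_3(\ell)$. The remaining part is $P$ times the sum over those quadruples with $r_1+r_2+r_3\equiv -\mathfrak f(\alpha_i)-\alpha_i r \pmod P$. For fixed $r$, the sum of $r_1r_2r_3$ over the triples with $r_1+r_2+r_3\equiv m$ is exactly $H_P(m)$ with $m=[-\mathfrak f(\alpha_i)-\alpha_i r]_P$. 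Collecting the factors gives $P\cdot P^{-4}=P^{-3}$ in front of $\sum_r r\,H_P(\cdot)$. Subtracting the expressions for $i=1$ and $i=2$ then yields the stated formula.

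The main obstacle is bookkeeping rather than anything conceptual. We must confirm that the definitions of $D_3$ and $C_\ell=32D_3+4D_1$ use the same normalization, namely $\tau_P$ summed over $z\neq1$ with the exponent $\mathfrak f(a)=(a+3)/2$, as in Lemma~\ref{lem:SP-cyclotomic-trace}. We must also check that the ranges $1\le r_k\le P-1$ match those in the definition of $H_P$. The $r_k=0$ terms would contribute zero anyway, since they carry the weight $r_k$. Finally, the reduction of exponents modulo $P$ is harmless, because $H_P$ depends only on the residue class of $m$.
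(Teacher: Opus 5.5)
Your proposal is correct and follows essentially the same route as the paper. You expand $z^{\mathfrak f(a)}I_a(z)I_1(z)^3$ as a quadruple sum, apply the monomial rule for $\tau_P$, and cancel the $\alpha_i$-independent total-coefficient term in the difference. You then identify the surviving zero-exponent condition with $H_P$, which gives the factor $P\cdot P^{-4}=P^{-3}$. Your extra checks are sound bookkeeping that the paper leaves implicit: that $\tau_P$ is well defined on $\mathcal A_P$, that the $r_k=0$ terms vanish, and that $H_P$ depends only on residues.
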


\begin{proof}
Expanding $z^{\mathfrak f(a)}I_a(z)I_1(z)^3$ gives
\begin{equation}
\frac1{P^4}\sum_{r,s_1,s_2,s_3=1}^{P-1}rs_1s_2s_3 z^{\mathfrak f(a)+ar+s_1+s_2+s_3}.
\end{equation}
Applying $\tau_P$ selects exponents congruent to zero modulo $P$ and subtracts the total coefficient sum.  The total coefficient sum is the same for $a_1$ and $a_2$, so it cancels in the difference.  The remaining zero-exponent condition is exactly the displayed convolution formula.
\end{proof}

\begin{lemma}[Polynomial form of the cubic convolution]
\label{lem:HP-polynomial-form}
For $0\leq m\leq P-1$, one has
\begin{equation}
H_P(m)=\frac{P^2}{6}m^3-\frac{P^2(P-3)}{4}m^2+\frac{P^2(P^2-9P+12)}{12}m+\frac{P^2(P-2)(P-1)(P+1)}{8}.
\end{equation}
\end{lemma}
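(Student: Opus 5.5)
The plan is to regard $H_P$ as a threefold cyclic convolution and compute it in two steps, each reducing to a Faulhaber sum.

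Let $g:\mathbb Z/P\mathbb Z\to\mathbb Q$ be given by $g(r)=[r]_P$, the standard residue in $\{0,\ldots,P-1\}$. Then $g(0)=0$, so
\begin{equation}
H_P=g*g*g, \qquad (f*h)(m)=\sum_{r\in\mathbb Z/P\mathbb Z}f(r)\,h(m-r).
\end{equation}
Because $g(0)=0$, the restriction $1\le r_i\le P-1$ in the definition of $H_P$ is automatic, and no separate boundary terms arise.

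\textbf{Step 1: the double convolution.} Fix a representative $0\le m\le P-1$ and compute $K(m)=(g*g)(m)=\sum_{r=0}^{P-1} r\,[m-r]_P$. Split the range at $r=m$:
\begin{equation}
[m-r]_P=m-r\quad (0\le r\le m), \qquad [m-r]_P=m-r+P \quad (m<r\le P-1).
\end{equation}
This gives
\begin{equation}
K(m)=\sum_{r=0}^{P-1}r(m-r)+P\sum_{r=m+1}^{P-1}r,
\end{equation}
which is an explicit polynomial in $m$ of degree at most $2$ with coefficients in $\mathbb Q[P]$. The first sum is linear in $m$ and the second equals $P\bigl(\binom P2-\binom{m+1}2\bigr)$. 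The resulting formula is valid for every $0\le m\le P-1$, including $m=0$ and $m=P-1$.

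\textbf{Step 2: the triple convolution.} Compute $H_P(m)=\sum_{r=0}^{P-1} r\,K([m-r]_P)$ with the same split at $r=m$:
\begin{equation}
H_P(m)=\sum_{r=0}^{m} r\,K(m-r)+\sum_{r=m+1}^{P-1} r\,K(m-r+P).
\end{equation}
Substitute the quadratic polynomial $K$ from Step 1. Each piece is then a sum of polynomial expressions in $r$ over an interval whose endpoints depend on $m$, so Faulhaber's formulas for $\sum r^k$ with $k\le 3$ produce a polynomial in $m$. The total is a priori of degree at most $4$ in $m$.

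\textbf{Step 3: match coefficients.} Expand and collect powers of $m$.
\begin{itemize}
\item The $m^4$ terms should cancel between the two pieces.
\item The remaining coefficients should be $\tfrac{P^2}{6}$, $-\tfrac{P^2(P-3)}{4}$, $\tfrac{P^2(P^2-9P+12)}{12}$ and $\tfrac{P^2(P-2)(P-1)(P+1)}{8}$.
\end{itemize}
As an independent check on the constant term, evaluate $H_P(0)$ and compare it with $\tfrac{P^2(P-2)(P-1)(P+1)}{8}$. A second check is
\begin{equation}
\sum_{m=0}^{P-1}H_P(m)=\Bigl(\tbinom P2\Bigr)^3.
\end{equation}

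\textbf{Main obstacle.} The substantive difficulty is the bookkeeping in Step 3: carrying the quadratic $K$ through the shifted sums without sign or offset errors. In particular, the argument $m-r+P$ in the second piece makes every coefficient mix powers of $P$. I would organize the computation by writing the second piece in the variable $s=P-r$, with $s$ running from $1$ to $P-1-m$, before expanding. The final polynomial identity is routine and can be certified as a polynomial identity in $(m,P)$, in the same spirit as the symbolic checks of Appendix~\ref{app:block-moment-verification}.
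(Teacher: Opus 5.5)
Your proposal is correct, and it follows a different route from the paper. The paper computes the linear (non-cyclic) triple convolution in one step. It writes $B_P(x)=\sum_{r=1}^{P-1}rx^r$ in closed form, cubes it, and extracts the coefficients of $x^m$, $x^{m+P}$, $x^{m+2P}$ from $x^3(1-Px^{P-1}+(P-1)x^P)^3(1-x)^{-6}$ via $\binom{M+5}{5}$. That choice fits the surrounding argument, since $I_1(z)=B_P(z)/P$. It does, however, mean expanding a trinomial cube and tracking which of its terms reach each of the three exponents.

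Your iterated cyclic convolution needs only one split at $r=m$ per step and Faulhaber sums of degree at most three. It also produces a reusable intermediate: Step 1 gives $K(m)=-\tfrac P2m^2+\tfrac{P(P-2)}2m+\tfrac{P(P^2-1)}6$ for $0\le m\le P-1$.

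One further observation streamlines Step 2. If $\widetilde K$ denotes this quadratic on all of $\mathbb Z$, then $\widetilde K(x+P)=\widetilde K(x)-P^2(x+1)$, and hence
\[
H_P(m)=\sum_{r=0}^{P-1}r\,\widetilde K(m-r)\;-\;P^2\sum_{r=m+1}^{P-1}r\,(m+1-r).
\]
The first sum runs over a fixed range and is quadratic in $m$. The second supplies the cubic term $\tfrac{P^2}{6}m^3$. So the $m^4$ cancellation you anticipate is built in, and the drop from degree four to degree three is explained rather than checked.

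Carrying this out gives exactly the four stated coefficients. For example, the constant term is $-\tfrac{P^2(P-1)(P-2)(5P-3)}{24}+\tfrac{P^3(P-1)(P-2)}{3}=\tfrac{P^2(P-2)(P-1)(P+1)}{8}$. Your sanity checks are also consistent: for $P=3$ one gets $(H_3(0),H_3(1),H_3(2))=(9,6,12)$, whose sum is $27=\binom32^3$.
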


\begin{proof}
Let $B_P(x)=\sum_{r=1}^{P-1}rx^r=x(1-Px^{P-1}+(P-1)x^P)/(1-x)^2$.  The coefficient of $x^N$ in $B_P(x)^3$ is the sum of $r_1r_2r_3$ over $r_1+r_2+r_3=N$.  Since $3\leq N\leq3P-3$, the congruence class $m$ receives contributions only from $m$, $m+P$, and $m+2P$.  Expanding
\begin{equation}
B_P(x)^3=\frac{x^3(1-Px^{P-1}+(P-1)x^P)^3}{(1-x)^6}
\end{equation}
and using $[x^M](1-x)^{-6}=\binom{M+5}{5}$, then collecting powers of $m$, gives the displayed cubic polynomial.
\end{proof}

\begin{lemma}[Block moment identities]
\label{lem:block-moment-identities}
Let $N=\ell$ and $P=N^2$.  Define
\begin{equation}
R_1(t,n)=\left[n-1+N\left(t-n-\frac{N+1}{2}\right)\right]_P,
\end{equation}
and
\begin{equation}
R_2(t,n)=\left[n-1+N(t-2n-1)\right]_P.
\end{equation}
For
\begin{equation}
M_k(N)=\sum_{t=0}^{N-1}\sum_{n=0}^{N-1}(Nt+n)\left(R_1(t,n)^k-R_2(t,n)^k\right),
\end{equation}
one has
\begin{equation}
M_1(N)=\frac{N^3(N-1)}{2},
\end{equation}
\begin{equation}
M_2(N)=\frac{N^3(N-1)(N^2-2)}{2},
\end{equation}
and
\begin{equation}
M_3(N)=-\frac{N^3(N-1)(11N^5-460N^4+70N^3+1420N^2-81N-1440)}{960}.
\end{equation}
\end{lemma}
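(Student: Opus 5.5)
The plan is to compute the three block moments directly, following the same decomposition used in the proof of Proposition~\ref{prop:D1-cancellation}. The first step is to make the two residues explicit. Writing $r=Nt+n$ with $0\le t,n\le N-1$, the quantities $R_1(t,n)$ and $R_2(t,n)$ take the form $n-1+N c$ with $c$ reduced modulo $N$, except for a boundary correction when $n=0$. The term $n-1=-1$ then forces a borrow, and the residue becomes $P-1-N+N[c]_N$, or equivalently $N-1+N[c-1]_N$.

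So, for $1\le n\le N-1$, I will write
\begin{equation}
R_1=n-1+N\,[t-n-h]_N,\qquad R_2=n-1+N\,[t-2n-1]_N,\qquad h=\tfrac{N+1}{2},
\end{equation}
and treat the column $n=0$ separately, where
\begin{equation}
R_1=N-1+N\,[t-h-1]_N,\qquad R_2=N-1+N\,[t-2]_N.
\end{equation}
Expanding $R_i^k$ binomially in the block variable $c_i=[\cdot]_N$ reduces each moment to sums of the shape $\sum_t (Nt+n)\,[t-c]_N^j$ for $j\le k$.

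The second step uses the permutation trick from the $D_1$ cancellation. For fixed $n$, each $[t-c]_N$ runs over $\{0,\dots,N-1\}$ as $t$ varies, so every part weighted by $n$ alone cancels between $R_1$ and $R_2$. What remains are the weighted sums
\begin{equation}
G_j(c)=\sum_{t=0}^{N-1}t\,[t-c]_N^j,
\end{equation}
which are explicit piecewise polynomials in $c\in\{0,\dots,N-1\}$. Splitting the $t$-sum at $t=c$ gives $G_j(c)=\sum_{t}t(t-c)^j+N\sum_{t<c}t\,(\ldots)$, and Faulhaber sums produce closed polynomials in $c$ and $N$.

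The third step handles the cross terms. These involve $n^{\,i}$ multiplied by the block residues, coupled through $c_1=[n+h]_N$ and $c_2=[2n+1]_N$ for $R_1$ and $R_2$ respectively. This coupling is the main obstacle: the sums $\sum_n n^i G_j([n+h]_N)$ and $\sum_n n^i G_j([2n+1]_N)$ are no longer equal. I would evaluate them by splitting the $n$-range at the wrap points. For the first, the split is at $n=N-h$, i.e.\ $n<(N-1)/2$ versus $n\ge (N-1)/2$. For the second, the split is at $n=(N-1)/2$, above which $2n+1\ge N$. On each piece the residue is affine in $n$, so the sums again reduce to Faulhaber polynomials in $N$. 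Adding the $n=0$ column corrections, $M_1$ should collapse to $N^3(N-1)/2$ quickly, and this serves as a check on the bookkeeping.

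For $M_2$ and especially $M_3$, the degree-three expansion produces many terms, so I would organise them by total degree in $N$ and verify the final polynomials by expanding both sides. As an independent confirmation, I would check the small cases $N=5,7,9,11$ by direct enumeration, with a symbolic verification of the type referred to in Appendix~\ref{app:block-moment-verification}. Since both sides are polynomials in $N$ of bounded degree on each parity class, agreement at enough odd values $N$ would also pin down the identities rigorously. Concretely, $M_3$ has degree at most $9$ in $N$, so ten odd values suffice, provided one first establishes that $M_k(N)$ is polynomial in $N$ for odd $N$. That polynomiality follows from the piecewise-affine structure found in the third step.
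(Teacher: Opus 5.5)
Your proposal is correct and is essentially the paper's proof: you use the same decomposition $R_i(t,n)=c_n+N[t-\beta_i(n)]_N$ with the borrow in the column $n=0$, the same cancellation of the $n$-weighted parts (because $t\mapsto[t-\beta_i(n)]_N$ permutes the residues), and the same reduction to $G_a(c)=\sum_t t[t-c]_N^a$ and power sums, and your split of the $n$-range at the wrap point $n=(N-1)/2$ makes explicit the reduction of $\beta_i(n)$ modulo $N$ that the paper passes over. Two small corrections: your first borrowed form $P-1-N+N[c]_N$ is wrong, while $N-1+N[c-1]_N$, the form you actually use, is right; and the interpolation shortcut needs a reason for $\deg M_3\le 9$, since the term-by-term count from the binomial expansion gives $10$, which is supplied by observing that in the top term $\sum_n\bigl(G_3([\beta_1(n)]_N)-G_3([\beta_2(n)]_N)\bigr)$ collapses to $N\bigl((N-2)^3-((N-3)/2)^3\bigr)$, because $n\mapsto[\beta_i(n)]_N$ is a bijection apart from the $n=0$ correction.
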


\begin{proof}
Write $[q]_N$ for the standard residue modulo $N$, let $c_n=[n-1]_N$, and let $\varepsilon_n=1$ for $n=0$ and $\varepsilon_n=0$ otherwise.  Then
\begin{equation}
[n-1+NQ]_P=c_n+N[Q-\varepsilon_n]_N.
\end{equation}
Thus the two residues can be written as
\begin{equation}
R_i(t,n)=c_n+N[t-\beta_i(n)]_N,
\end{equation}
where
\begin{equation}
\beta_1(n)=n+\frac{N+1}{2}+\varepsilon_n,
\end{equation}
and
\begin{equation}
\beta_2(n)=2n+1+\varepsilon_n.
\end{equation}
For $a\geq0$, define
\begin{equation}
S_a(N)=\sum_{q=0}^{N}q^a.
\end{equation}
Define
\begin{equation}
G_a(c)=\sum_{t=0}^{N-1}t[t-c]_N^a.
\end{equation}
Changing variables
\begin{equation}
q=[t-c]_N
\end{equation}
gives
\begin{equation}
G_a(c)=\sum_{q=0}^{N-1}[q+c]_Nq^a.
\end{equation}
Splitting the range at $q=N-c$, one obtains
\begin{equation}
G_a(c)
=
\sum_{q=0}^{N-c-1}(q+c)q^a
+
\sum_{q=N-c}^{N-1}(q+c-N)q^a.
\end{equation}
Equivalently,
\begin{equation}
G_a(c)
=
S_{a+1}(N-1)+cS_a(N-1)-N\left(S_a(N-1)-S_a(N-c-1)\right).
\end{equation}
This reduces every occurrence of $G_a(c)$ to the standard power sums
\begin{equation}
S_0(N),\qquad S_1(N),\qquad S_2(N),\qquad S_3(N),\qquad S_4(N).
\end{equation}

Now write
\begin{equation}
R_i(t,n)=c_n+N[t-\beta_i(n)]_N.
\end{equation}
For $k=1,2,3$, expand
\begin{equation}
R_i(t,n)^k
=
\sum_{a=0}^{k}
\binom{k}{a}
 c_n^{k-a}N^a[t-\beta_i(n)]_N^a.
\end{equation}
Multiplying by $Nt+n$, summing in $t$, and using the definition of $G_a$, gives
\begin{equation}
\sum_{t=0}^{N-1}(Nt+n)R_i(t,n)^k
=
\sum_{a=0}^{k}
\binom{k}{a}
 c_n^{k-a}N^a
\left(NG_a(\beta_i(n))+n\sum_{t=0}^{N-1}[t-\beta_i(n)]_N^a\right).
\end{equation}
The second sum is independent of $\beta_i(n)$, because $[t-\beta_i(n)]_N$ permutes the residues modulo $N$ as $t$ runs from $0$ to $N-1$.  Therefore it cancels in the difference between $i=1$ and $i=2$.  Hence
\begin{equation}
M_k(N)
=
N\sum_{n=0}^{N-1}
\sum_{a=0}^{k}
\binom{k}{a}
 c_n^{k-a}N^a
\left(G_a(\beta_1(n))-G_a(\beta_2(n))\right).
\end{equation}
Substituting
\begin{equation}
c_n=[n-1]_N,
\qquad
\beta_1(n)=n+\frac{N+1}{2}+\varepsilon_n,
\qquad
\beta_2(n)=2n+1+\varepsilon_n,
\end{equation}
and using the displayed formula for $G_a$, all terms reduce to finite sums of powers of $n$ of degree at most four.

For completeness, we spell out the last algebraic step.  Write
\begin{equation}
S_a(N-1)=\sum_{n=0}^{N-1}n^a.
\end{equation}
We use
\begin{equation}
S_0(N-1)=N,
\qquad
S_1(N-1)=\frac{N(N-1)}{2},
\qquad
S_2(N-1)=\frac{N(N-1)(2N-1)}{6},
\end{equation}
\begin{equation}
S_3(N-1)=\frac{N^2(N-1)^2}{4},
\qquad
S_4(N-1)=\frac{N(N-1)(2N-1)(3N^2-3N-1)}{30}.
\end{equation}
After substituting $c_n=[n-1]_N$, $\beta_1(n)=n+(N+1)/2+\varepsilon_n$, and $\beta_2(n)=2n+1+\varepsilon_n$, the summand in $M_k(N)$, for $k=1,2,3$, is a polynomial in $n$ and $N$ of degree at most four, apart from the isolated correction at $n=0$ coming from $\varepsilon_n$.  Evaluating the $n=0$ correction separately and summing the remaining polynomial over $1\leq n\leq N-1$ using the five displayed power-sum formulas gives exactly the three displayed identities for $M_1(N)$, $M_2(N)$, and $M_3(N)$.
\end{proof}

\begin{theorem}[Evaluation of the cubic trace contribution]
\label{thm:D3-closed-form}
For every odd integer $\ell\geq 5$, one has
\begin{equation}
D_3(\ell)=-\frac{\ell^2(\ell-1)^2(\ell+1)(11\ell^2+20\ell+81)}{5760}.
\end{equation}
\end{theorem}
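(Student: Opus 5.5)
The plan is to start from Lemma~\ref{lem:D3-trace-to-convolution}, which writes $D_3(\ell)$ as a weighted difference of values of $H_P$. Then I insert the cubic polynomial of Lemma~\ref{lem:HP-polynomial-form}, so that everything reduces to the block moments $M_1,M_2,M_3$ of Lemma~\ref{lem:block-moment-identities}. The first step is to identify the arguments. With $\mathfrak f(a)=\mathfrak e(a)+1$, each argument $[-\mathfrak f(\alpha_i)-\alpha_i r]_P$ equals the corresponding $D_1$ residue from Proposition~\ref{prop:D1-cancellation} shifted by $-1$. In the block coordinates $r=Nt+n$ this gives exactly
\begin{equation}
[-\mathfrak f(\alpha_1)-\alpha_1 r]_P=R_1(t,n),\qquad
[-\mathfrak f(\alpha_2)-\alpha_2 r]_P=R_2(t,n).
\end{equation}
I will recheck this reduction modulo $P$ directly from $\alpha_1=N^2+N-1$ and $\alpha_2=2N-1$, using $(N+1)/2$ as the inverse of $2$ modulo $N$. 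The term $r=0$, i.e.\ $(t,n)=(0,0)$, carries weight $r=0$ and can be included freely.

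Next, since $0\le R_i\le P-1$, Lemma~\ref{lem:HP-polynomial-form} applies pointwise. Writing $H_P(m)=h_3m^3+h_2m^2+h_1m+h_0$, the constant term $h_0$ cancels in the difference between $i=1$ and $i=2$. This yields
\begin{equation}
D_3(\ell)=\frac{1}{P^3}\Bigl(h_3M_3(N)+h_2M_2(N)+h_1M_1(N)\Bigr),
\end{equation}
where
\begin{equation}
h_3=\frac{P^2}{6},\qquad h_2=-\frac{P^2(P-3)}{4},\qquad h_1=\frac{P^2(P^2-9P+12)}{12},\qquad P=N^2.
\end{equation}
The factor $P^2$ cancels, leaving the prefactor $1/N^2$. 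I then substitute the closed forms of $M_1,M_2,M_3$, each of which carries the common factor $N^3(N-1)$.

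The remaining step is pure polynomial algebra. After multiplying by $960$, I need the bracket
\begin{equation}
-\tfrac{1}{6}(11N^5-460N^4+\cdots)-120(N^2-3)(N^2-2)+40(N^4-9N^2+12)
\end{equation}
to equal $-\tfrac16(N-1)(N+1)(11N^2+20N+81)$ times the appropriate power of $N$, so that the total becomes $-N^2(N-1)^2(N+1)(11N^2+20N+81)/5760$. I will check the degrees and the leading coefficient $11$ first, then expand fully.

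The main obstacle is the bookkeeping in the first step: getting the shift by $-1$ and the branch constant $h=(N+1)/2$ exactly right, so that the arguments match $R_1,R_2$ as defined in Lemma~\ref{lem:block-moment-identities}. A sign or offset error there would spoil the final identity. As a safeguard, I will evaluate the final expression at $\ell=5$. Combined with $D_1=0$ and $C_\ell=32D_3$, this must reproduce $C_5=-6080$. Indeed, $-25\cdot16\cdot6\cdot456/5760\cdot32=-6080$, which is consistent.
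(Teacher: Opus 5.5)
Your proposal is correct and follows the paper's proof essentially verbatim. Like the paper, you identify the convolution arguments with the block residues $R_1,R_2$, insert the cubic form of $H_P$ (the constant term cancels and $P^2/P^3$ leaves the prefactor $1/P$), substitute the three moment identities, and simplify. Your bracket does reduce to $-\tfrac{N}{6}(N^2-1)(11N^2+20N+81)$, so $D_3=\tfrac{N(N-1)}{960}$ times it gives exactly the stated closed form.
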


\begin{proof}
By Lemma~\ref{lem:D3-trace-to-convolution}, the block residues are $R_1(t,n)$ and $R_2(t,n)$ from Lemma~\ref{lem:block-moment-identities}.  Applying Lemma~\ref{lem:HP-polynomial-form} gives
\begin{equation}
D_3(\ell)=\frac1P\left[\frac16M_3(\ell)-\frac{P-3}{4}M_2(\ell)+\frac{P^2-9P+12}{12}M_1(\ell)\right].
\end{equation}
Substituting $P=\ell^2$ and the three moment identities from Lemma~\ref{lem:block-moment-identities}, then simplifying the resulting polynomial, gives the displayed formula.
\end{proof}

\begin{theorem}[Nonvanishing of the square-family rho coefficient]
\label{thm:Cell-nonvanishing-square-family}
For every odd integer $\ell\geq 5$,
\begin{equation}
C_\ell=-\frac{\ell^2(\ell-1)^2(\ell+1)(11\ell^2+20\ell+81)}{180}.
\end{equation}
In particular, $C_\ell\neq 0$.
\end{theorem}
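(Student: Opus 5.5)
The proof assembles the decomposition already established before the statement. By Lemma~\ref{lem:Cell-as-SP-difference} and Lemma~\ref{lem:SP-cyclotomic-trace}, together with the inverse formula of Lemma~\ref{lem:inverse-formula-cyclotomic}, one has
\begin{equation}
C_\ell=32D_3(\ell)+4D_1(\ell).
\end{equation}
Two points need checking here. First, the coefficients $16\cdot 2=32$ and $(-4)\cdot(-1)=4$ match the weights in $2\csc^3(x_j)-\csc(x_j)$. Second, replacing the rational functions $(z^a-1)^{-1}$ and $(z-1)^{-1}$ by $I_a(z)$ and $I_1(z)$ is legitimate under $\tau_P$. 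This holds because $\tau_P$ only evaluates at nontrivial $P$-th roots of unity, where the identities of $\mathcal A_P$ hold pointwise; coprimality of $\alpha_1=P+\ell-1$ and $\alpha_2=2\ell-1$ with $P$ was verified in Lemma~\ref{lem:square-family-fixed-denominators}.

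Next, Proposition~\ref{prop:D1-cancellation} gives $D_1(\ell)=0$, so $C_\ell=32D_3(\ell)$. Substituting Theorem~\ref{thm:D3-closed-form} gives
\begin{equation}
C_\ell=-\frac{32\,\ell^2(\ell-1)^2(\ell+1)(11\ell^2+20\ell+81)}{5760}
=-\frac{\ell^2(\ell-1)^2(\ell+1)(11\ell^2+20\ell+81)}{180},
\end{equation}
using $5760/32=180$. As a consistency check, at $\ell=5$ this gives
\begin{equation}
-\frac{25\cdot16\cdot6\cdot456}{180}=-6080,
\end{equation}
which matches the exact second-derivative lemma for $L(25,4)$ versus $L(25,9)$.

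For nonvanishing, every factor is strictly positive when $\ell\geq5$: $\ell^2>0$, $(\ell-1)^2>0$, $\ell+1>0$, and $11\ell^2+20\ell+81>0$. In fact the quadratic has discriminant $400-4\cdot11\cdot81<0$, so it is positive for all real $\ell$. Hence $C_\ell<0$, and in particular $C_\ell\neq0$.

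The main obstacle is not this assembly, which is immediate once the prior results are granted. The substantive weight lies in Theorem~\ref{thm:D3-closed-form}, that is, in the block moment identities of Lemma~\ref{lem:block-moment-identities} and the reduction of the residues $[-\mathfrak f(\alpha_i)-\alpha_i r]_P$ to the block form $R_i(t,n)$. If I had to reverify anything, I would check that block reduction first. Writing $r=\ell t+n$, one has
\begin{equation}
\alpha_1 r\equiv (\ell-1)(\ell t+n)\equiv -\ell t+(\ell-1)n \pmod{\ell^2},
\end{equation}
and
\begin{equation}
\mathfrak f(\alpha_1)=\frac{\ell^2+\ell+2}{2}.
\end{equation}
I would confirm that these combine into $n-1+\ell\bigl(t-n-\tfrac{\ell+1}{2}\bigr)$ modulo $P$, up to the sign convention used for $r$, and do the analogous check for $\alpha_2$. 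Beyond that, I would spot-check the final polynomial identity at $\ell=7$ numerically against the trigonometric definition of $C_\ell$.
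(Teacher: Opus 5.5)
Your proposal is correct and follows the paper's proof: write $C_\ell=32D_3(\ell)+4D_1(\ell)$, remove $D_1(\ell)$ by Proposition~\ref{prop:D1-cancellation}, substitute the closed form of Theorem~\ref{thm:D3-closed-form} using $5760/32=180$, and conclude $C_\ell<0$ because every factor of the numerator is positive. Your extra checks are all accurate: the coefficients $32$ and $4$, the pointwise validity of the inverse formula at nontrivial $P$-th roots of unity, the value $-6080$ at $\ell=5$, and the discriminant. The block reduction you flag combines exactly to $n-1+\ell\bigl(t-n-\tfrac{\ell+1}{2}\bigr)$ modulo $P$, with no sign ambiguity.
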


\begin{proof}
We have $C_\ell=32D_3(\ell)+4D_1(\ell)$.  Proposition~\ref{prop:D1-cancellation} gives $D_1(\ell)=0$, and Theorem~\ref{thm:D3-closed-form} gives the formula for $D_3(\ell)$.  Multiplying by $32$ gives the displayed expression for $C_\ell$.  Since $\ell\geq 5$, every factor in the numerator is positive, so $C_\ell<0$.
\end{proof}

\begin{corollary}[Square-family residual-circle rho-number separation]
\label{cor:square-family-rho-separation-final}
For every odd integer $\ell\geq 5$, the pair
\begin{equation}
N(\ell^2,\ell-1)
\qquad\text{and}\qquad
N(\ell^2,2\ell-1)
\end{equation}
has vanishing ordinary $\eta$/rho shadow but nontrivial residual-circle equivariant rho germ.  In the normalized convention,
\begin{equation}
\Phi_\ell(0)=0,
\qquad
\Phi_\ell'(0)=0,
\qquad
\Phi_\ell''(0)=-\frac{\ell^2(\ell-1)^2(\ell+1)(11\ell^2+20\ell+81)}{180}\neq 0.
\end{equation}
Here $\Phi_\ell'(0)=0$ follows from the evenness in Lemma~\ref{lem:residual-germ-even}; the nontrivial assertions are $\Phi_\ell(0)=0$ and $\Phi_\ell''(0)\neq 0$.  Consequently, for each odd $\ell\geq 5$, there exists a finite-order residual-circle element $g$ such that
\begin{equation}
\eta_g(D_{N(\ell^2,\ell-1)})\neq\eta_g(D_{N(\ell^2,2\ell-1)}).
\end{equation}
\end{corollary}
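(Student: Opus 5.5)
The corollary is an assembly of results already proved, so the plan is to collect them in order and then convert the second-jet information into a finite-order statement. Throughout, $N(\ell^2,\ell-1)$ and $N(\ell^2,2\ell-1)$ are read as the residual-circle data $\mathcal Y^{\operatorname{res}}_{\ell,1}$ and $\mathcal Y^{\operatorname{res}}_{\ell,2}$ of \autoref{subsec:stage5B2-square-family-rho-germ}. By Lemma~\ref{lem:donnelly-aps-normalization-factor}, their relative Donnelly--APS residual-circle $\eta$ germ equals $\kappa_P\Phi_\ell(\delta)$ with $\kappa_P\neq0$. Hence every vanishing or nonvanishing assertion may be checked on $\Phi_\ell$.

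First, $\Phi_\ell(0)=0$ is Lemma~\ref{lem:square-family-ordinary-shadow}. That lemma rests on the cotangent-sum evaluation $\mathfrak E(\ell^2,a\ell-1)=-(\ell^2-1)/3$ for $a=1,2$, together with Lemma~\ref{lem:spin-lift-representative-identity}, which identifies the odd-representative sum with the trigonometric sum. Second, $\Phi_\ell'(0)=0$ follows directly from the pairing symmetry $B_{\ell,P-j}=B_{\ell,j}$ in Lemma~\ref{lem:residual-germ-even}, since it makes $\Phi_\ell$ even in $\delta$. Third, Theorem~\ref{thm:square-family-rho-germ-reduction} identifies $\Phi_\ell''(0)$ with the finite sum $C_\ell$. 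Theorem~\ref{thm:Cell-nonvanishing-square-family} then evaluates $C_\ell$ in closed form. That evaluation combines $C_\ell=32D_3(\ell)+4D_1(\ell)$, the cancellation $D_1(\ell)=0$ from Proposition~\ref{prop:D1-cancellation}, and the formula for $D_3(\ell)$ in Theorem~\ref{thm:D3-closed-form}. Nonvanishing is immediate because every factor of $\ell^2(\ell-1)^2(\ell+1)(11\ell^2+20\ell+81)$ is positive for $\ell\geq5$.

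For the finite-order consequence, I would use Lemma~\ref{lem:normalized-residual-germ-analytic}: $\Phi_\ell$ is real-analytic on $|\delta|<\pi/P$. Taylor's theorem then gives $\Phi_\ell(\delta)=\tfrac12C_\ell\delta^2+O(\delta^4)$, so there is $\epsilon>0$ with $\Phi_\ell(\delta)\neq0$ whenever $0<|\delta|<\epsilon$. Choose $\delta=\pi/M$ with $M$ large enough that $\pi/M<\min(\epsilon,\pi/P)$. Then $g=g_\delta=(e^{2\pi i/M},1)$ has finite order. Multiplying back by $\kappa_P\neq0$, and noting that the $q$-independent summand cancels in the difference, gives $\eta_g(D_{N(\ell^2,\ell-1)})\neq\eta_g(D_{N(\ell^2,2\ell-1)})$.

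The only step with real content is the closed form for $C_\ell$, specifically the block moment identity for $M_3(N)$ in Lemma~\ref{lem:block-moment-identities}, which feeds Theorem~\ref{thm:D3-closed-form}. I take it as given here. As a consistency check, I would substitute $\ell=5$: the formula gives $C_5=-25\cdot16\cdot6\cdot456/180=-6080$, in agreement with the independent cyclotomic verification of $\Phi''(0)$ for $L(25,4)$ versus $L(25,9)$.
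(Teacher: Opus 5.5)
Your proposal is correct and follows essentially the same route as the paper: you assemble $\Phi_\ell(0)=0$, evenness, and the closed form of $C_\ell$ from the previously proved results, then use the Taylor expansion, a choice of $\delta$ with $\delta/\pi\in\mathbb Q$, and the nonzero factor $\kappa_P$ from Lemma~\ref{lem:donnelly-aps-normalization-factor} to obtain the finite-order separation. Your extra details make explicit what the paper leaves implicit: you also require $\pi/M<\pi/P$, and you trace $\Phi_\ell(0)=0$ back through Lemma~\ref{lem:spin-lift-representative-identity}.
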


\begin{proof}
The first two identities are Theorem~\ref{thm:square-family-rho-germ-reduction}.  The second derivative identity is Theorem~\ref{thm:Cell-nonvanishing-square-family}.  Since $\Phi_\ell''(0)\neq 0$, the Taylor expansion shows that $\Phi_\ell(\delta)\neq 0$ for all sufficiently small nonzero $\delta$.  Choosing such a $\delta$ with $\delta/\pi\in\mathbb Q$ gives a finite-order residual-circle element. Lemma~\ref{lem:donnelly-aps-normalization-factor} then gives the same separation for the corresponding Donnelly-APS $\eta$ values.
\end{proof}

\subsection{Finite-cyclic secondary pairing}
\label{subsec:stage5B4-finite-cyclic-secondary-pairing}

The preceding corollary gives a nonzero finite-order equivariant $\eta$/rho number.  The corresponding finite-cyclic class-function statement is the following.  It is an unconditional secondary $\eta$ statement at the level of finite-order equivariant $\eta$ values.

Fix odd $\ell\geq 5$.  Choose a sufficiently small rational $\delta/\pi$ such that $\Phi_\ell(\delta)\neq 0$, and let
\begin{equation}
g_\delta=e^{2i\delta}.
\end{equation}
Then $g_\delta$ has finite order.  Let $G_\delta=\langle g_\delta\rangle$ be the finite cyclic subgroup of the residual circle generated by $g_\delta$.

\begin{definition}[Normalized finite-cyclic equivariant rho class function]
For $h\in G_\delta$, define
\begin{equation}
\rho^{G_\delta,\text{norm}}_\ell(h)
=
\kappa_{\ell^2}^{-1}
\left[
\eta_h(D_{\mathcal Y^{\operatorname{res}}_{\ell,1}})
-
\eta_h(D_{\mathcal Y^{\operatorname{res}}_{\ell,2}})
\right].
\end{equation}
Thus $\rho^{G_\delta,\text{norm}}_\ell$ is the normalized complex-valued class function on $G_\delta$; the Donnelly-APS convention differs only by the common nonzero factor from Lemma~\ref{lem:donnelly-aps-normalization-factor}.
\end{definition}

\begin{theorem}[Finite-cyclic secondary separation]
\label{thm:finite-cyclic-secondary-separation}
For every odd $\ell\geq 5$, there exists a finite cyclic subgroup $G_\delta\subset S^1_{\text{res}}$ such that
\begin{equation}
\rho^{G_\delta,\text{norm}}_\ell\neq 0.
\end{equation}
More precisely,
\begin{equation}
\rho^{G_\delta,\text{norm}}_\ell(g_\delta)=\Phi_\ell(\delta)\neq 0.
\end{equation}
\end{theorem}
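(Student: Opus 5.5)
The plan is to reduce the statement to the nonvanishing of the residual-circle germ $\Phi_\ell$ near $\delta=0$ that has already been established, and then to identify the class function evaluated at the generator with $\Phi_\ell(\delta)$ using the normalization lemma.

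\emph{Choosing $\delta$ and the subgroup.} By Theorem~\ref{thm:square-family-rho-germ-reduction} and Theorem~\ref{thm:Cell-nonvanishing-square-family}, $\Phi_\ell$ is real-analytic on $|\delta|<\pi/P$ (Lemma~\ref{lem:normalized-residual-germ-analytic}, Lemma~\ref{lem:square-family-fixed-denominators}) and satisfies
\begin{equation}
\Phi_\ell(\delta)=\tfrac12 C_\ell\delta^2+O(\delta^4),
\end{equation}
with $C_\ell<0$. Hence there is $\epsilon\in(0,\pi/P)$ such that $\Phi_\ell(\delta)\neq0$ whenever $0<|\delta|<\epsilon$. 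Since the rationals are dense, I would pick $\delta=\pi/N$ with $N$ an integer large enough that $\pi/N<\epsilon$. Then $g_\delta=e^{2i\delta}=e^{2\pi i/N}$ has order $N$, and $G_\delta=\langle g_\delta\rangle\subset S^1_{\mathrm{res}}$ is a finite cyclic subgroup.

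\emph{Identifying the class function.} For $h=g_\delta$, the definition of $\rho^{G_\delta,\mathrm{norm}}_\ell$ is $\kappa_{\ell^2}^{-1}$ times the difference of the Donnelly equivariant $\eta$ values at the residual element $g_\delta=(e^{2i\delta},1)$. Lemma~\ref{lem:donnelly-aps-normalization-factor} with $P=\ell^2$, $q_1=\ell-1$, $q_2=2\ell-1$ gives
\begin{equation}
\eta^{\mathrm{res}}_{P,q_1}(\delta)-\eta^{\mathrm{res}}_{P,q_2}(\delta)=\kappa_P\Phi_{q_1,q_2}(\delta)=\kappa_P\Phi_\ell(\delta),
\end{equation}
because the $q$-independent identity-deck term cancels. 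Since $\kappa_P\neq0$, dividing by it yields $\rho^{G_\delta,\mathrm{norm}}_\ell(g_\delta)=\Phi_\ell(\delta)$. This value is nonzero by the choice of $\delta$, so the class function is not identically zero.

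\emph{Main obstacle.} The one delicate point is the identification step. Lemma~\ref{lem:donnelly-aps-normalization-factor} is phrased for the germ at small $\delta$, so I would check that the chosen $\delta$ lies in the range $|\delta|<\pi/P$ where both the deck-sum formula and $\Phi_\ell$ are valid. This holds automatically because $\epsilon$ was taken to be less than $\pi/P$. I would also check that the half-angle convention, in which $g_\delta$ shifts $x_j$ to $x_j+\delta$, matches the parametrization of $G_\delta$. With these two checks in place, the argument is a direct combination of the earlier results and needs no further computation.
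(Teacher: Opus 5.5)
Your proposal is correct and follows the paper's argument. The paper cites Corollary~\ref{cor:square-family-rho-separation-final}, which already contains your Taylor-expansion choice of a small $\delta$ with $\delta/\pi\in\mathbb Q$, and then identifies $\rho^{G_\delta,\text{norm}}_\ell(g_\delta)$ with $\Phi_\ell(\delta)$ by definition; you simply make that identification explicit through Lemma~\ref{lem:donnelly-aps-normalization-factor} and $\kappa_P\neq 0$.
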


\begin{proof}
This follows directly from Corollary~\ref{cor:square-family-rho-separation-final}.  The chosen finite-order element $g_\delta$ satisfies $\Phi_\ell(\delta)\neq 0$, and by definition this value is $\rho^{G_\delta,\text{norm}}_\ell(g_\delta)$.
\end{proof}

\begin{corollary}[Nonzero finite-order evaluation pairing]
\label{cor:nonzero-finite-order-trace-pairing}
For every odd $\ell\geq 5$, there exists a finite cyclic subgroup $G_\delta\subset S^1_{\text{res}}$ and an element $g_\delta\in G_\delta$ such that
\begin{equation}
\operatorname{ev}_{g_\delta}(\rho^{G_\delta,\text{norm}}_\ell)\neq 0.
\end{equation}
\end{corollary}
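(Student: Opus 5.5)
The statement is Corollary~\ref{cor:nonzero-finite-order-trace-pairing}. It is a direct repackaging of Theorem~\ref{thm:finite-cyclic-secondary-separation}. The plan is therefore to unwind the definition of the evaluation pairing $\operatorname{ev}_{g}$ on class functions on $G_\delta$ and invoke that theorem. No new analysis is needed.

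Concretely, I would proceed in three steps.

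\textbf{Step 1: choose $\delta$.} Fix odd $\ell\geq 5$. By Theorem~\ref{thm:Cell-nonvanishing-square-family} we have $\Phi_\ell''(0)=C_\ell\neq 0$. Together with $\Phi_\ell(0)=0$ and $\Phi_\ell'(0)=0$ from Theorem~\ref{thm:square-family-rho-germ-reduction}, this gives
\begin{equation}
\Phi_\ell(\delta)=\tfrac12 C_\ell\delta^2+O(\delta^4).
\end{equation}
The expansion is justified because $\Phi_\ell$ is real-analytic on $|\delta|<\pi/P$ by Lemma~\ref{lem:normalized-residual-germ-analytic}. Hence there is $\epsilon\in(0,\pi/P)$ with $\Phi_\ell(\delta)\neq0$ for $0<|\delta|<\epsilon$. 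Pick $\delta=\pi/N$ with $N$ an integer large enough that $\pi/N<\epsilon$.

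\textbf{Step 2: the group and the class function.} Set $g_\delta=e^{2i\delta}=e^{2\pi i/N}$, which has order $N$. Let $G_\delta=\langle g_\delta\rangle\subset S^1_{\text{res}}$ be the cyclic subgroup it generates. The normalized class function $\rho^{G_\delta,\text{norm}}_\ell$ is then defined on $G_\delta$ as in the preceding definition.

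\textbf{Step 3: evaluate.} The evaluation pairing $\operatorname{ev}_{g_\delta}$ is just evaluation of the class function at the element $g_\delta$, so
\begin{equation}
\operatorname{ev}_{g_\delta}(\rho^{G_\delta,\text{norm}}_\ell)=\rho^{G_\delta,\text{norm}}_\ell(g_\delta).
\end{equation}
Theorem~\ref{thm:finite-cyclic-secondary-separation}, via Lemma~\ref{lem:donnelly-aps-normalization-factor}, identifies $\kappa_{\ell^2}^{-1}$ times the relative Donnelly--APS value at $g_\delta$ with $\Phi_\ell(\delta)$. By Step 1 this is nonzero.

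The only point needing care is the identification in Step 3, namely that $\operatorname{ev}_{g_\delta}$ means pointwise evaluation of a complex-valued class function. Once that is fixed, the argument is immediate. The identification of the Donnelly--APS quantity with $\Phi_\ell$ is already supplied by Lemma~\ref{lem:donnelly-aps-normalization-factor}, and $\kappa_{\ell^2}\neq0$ ensures normalization cannot create or destroy nonvanishing. The substantive obstacle, the nonvanishing of $C_\ell$, was settled earlier in Theorem~\ref{thm:Cell-nonvanishing-square-family}.
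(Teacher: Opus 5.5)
Your proposal is correct and follows the paper's proof, which simply reads $\operatorname{ev}_{g_\delta}$ as pointwise evaluation and quotes $\rho^{G_\delta,\text{norm}}_\ell(g_\delta)=\Phi_\ell(\delta)\neq 0$ from Theorem~\ref{thm:finite-cyclic-secondary-separation}. Your Step~1 only re-derives the choice of a small $\delta$ with $\delta/\pi\in\mathbb Q$, which the paper already makes in the setup before that theorem.
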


\begin{proof}
The evaluation pairing is $\operatorname{ev}_{g_\delta}(F)=F(g_\delta)$.  Therefore
\begin{equation}
\operatorname{ev}_{g_\delta}(\rho^{G_\delta,\text{norm}}_\ell)=\rho^{G_\delta,\text{norm}}_\ell(g_\delta)=\Phi_\ell(\delta)\neq 0.
\end{equation}
\end{proof}

\section{Classical rho invariants and secondary refinements}
\label{sec:scope-limitations}

\subsection{Comparison with the classical lens-space rho setting}

The results above are residual-circle equivariant $\eta$ and rho-number separations.  The pair $L(25,4)$ and $L(25,9)$ also sits naturally inside the classical lens-space setting: 
\begin{equation}
4\cdot9\equiv6^2\pmod {25},
\end{equation}
while
\begin{equation}
9\not\equiv \pm4^{\pm1}\pmod {25}.
\end{equation}
Thus the pair is homotopy equivalent but not homeomorphic in the usual lens-space sense.  The result proved here is more specific: it exhibits a residual-circle $\eta$ germ for which the scalar $\eta$ value vanishes, the first derivative vanishes by symmetry, and the second derivative is nonzero:
\begin{equation}
\Phi(0)=0,
\qquad
\Phi'(0)=0,
\qquad
\Phi''(0)\neq 0.
\end{equation}
The resulting second-jet formula gives an infinite family of explicit residual-circle rho-number separations.  Equality of the ordinary $\eta$ value for the untwisted Dirac operator is not, by itself, equality of all finite-cyclic APS $\rho$-invariants; that would require comparing twists by representations of $\pi_1(L(p,q))$ and the relevant kernel normalizations.  We therefore keep the conclusion at the residual-circle level.

\subsection{Relation to analytic surgery and later secondary-class questions}

The residual circle used here is an additional geometric symmetry, whereas the standard analytic-surgery and higher-rho constructions are built from the fundamental group and its group $C^*$-algebra~\cite{HigsonRoeAnalyticKH,HigsonRoeSurgery,PiazzaSchick}.  A Higson-Roe structure-class refinement would therefore require extra work beyond the numerical $\eta$/rho statements proved here.  Related delocalized $\eta$ and rho-type refinements appear in finite and infinite fundamental-group settings, but those pairings are not used in the present argument~\cite{LottDelocalized}.

A possible continuation is to ask whether the second jet of the residual-circle $\eta$ germ admits a natural secondary $K$-theoretic interpretation, perhaps through an equivariant analytic-surgery group, equivariant $K$-homology, or eta-form/transgression formalism.  A possible singular-foliation extension would require replacing the present free finite-quotient model with a holonomy groupoid framework, in the sense of Androulidakis-Skandalis~\cite{AndroulidakisSkandalis}.  This is not needed for the present eta-separation theorem and is left for separate work.

\FloatBarrier

\appendix

\section{Full first-useful spin-Fourier residue table}
\label{app:first-useful-spin-fourier-table}

\begin{table}[!htbp]
\centering
\small
\setlength{\tabcolsep}{4pt}
\renewcommand{\arraystretch}{1.12}
\begin{tabular}{c|c|c|c|c}
$p$ & $q$ & first useful $k$ & $\chi_k^+(p,q;u)$ & $\chi_k^-(p,q;u)$ \\
\hline
$5$ & $2$ & $1$ & $A_1$ & $A_3$ \\
$5$ & $3$ & $1$ & $A_3$ & $A_1$ \\
\hline
$7$ & $2$ & $2$ & $A_1$ & $A_5$ \\
$7$ & $3$ & $2$ & $A_1$ & $A_3$ \\
$7$ & $4$ & $2$ & $A_3$ & $A_1$ \\
$7$ & $5$ & $2$ & $A_5$ & $A_1$ \\
\hline
$9$ & $2$ & $2$ & $A_1$ & $A_3$ \\
$9$ & $4$ & $2$ & $A_3$ & $A_5$ \\
$9$ & $5$ & $2$ & $A_5$ & $A_3$ \\
$9$ & $7$ & $2$ & $A_3$ & $A_1$ \\
\hline
$11$ & $2$ & $3$ & $A_1$ & $A_5$ \\
$11$ & $3$ & $3$ & $A_3$ & $A_1$ \\
$11$ & $4$ & $3$ & $A_1$ & $A_7$ \\
$11$ & $5$ & $3$ & $A_3$ & $A_5$ \\
$11$ & $6$ & $3$ & $A_5$ & $A_3$ \\
$11$ & $7$ & $3$ & $A_7$ & $A_1$ \\
$11$ & $8$ & $3$ & $A_1$ & $A_3$ \\
$11$ & $9$ & $3$ & $A_5$ & $A_1$ \\
\hline
$13$ & $2$ & $3$ & $A_1$ & $A_3$ \\
$13$ & $3$ & $4$ & $A_1$ & $A_3$ \\
$13$ & $4$ & $4$ & $A_1$ & $A_9$ \\
$13$ & $5$ & $2$ & $A_5$ & $A_1$ \\
$13$ & $6$ & $3$ & $A_5$ & $A_7$ \\
$13$ & $7$ & $3$ & $A_7$ & $A_5$ \\
$13$ & $8$ & $2$ & $A_1$ & $A_5$ \\
$13$ & $9$ & $4$ & $A_9$ & $A_1$ \\
$13$ & $10$ & $4$ & $A_3$ & $A_1$ \\
$13$ & $11$ & $3$ & $A_3$ & $A_1$
\end{tabular}
\caption{Full first-useful spin-Fourier residue table for odd $p\leq13$.  Here $A_n=A_n(u)=u^n+u^{-n}$.}
\label{tab:first-useful-spin-fourier}
\end{table}

This appendix gives the full small-range table used only as a bookkeeping check for the affine congruence and parity-shell formula in~\autoref{subsec:stage-1C-first-table}.  The proof of the main residual-circle $\eta$ separation does not depend on this table.

\FloatBarrier

\section{Exact verification of the \texorpdfstring{$L(25,4)$ versus $L(25,9)$}{L(25,4) versus L(25,9)} equivariant $\eta$ second derivative}
\label{app:L25-eta-second-derivative}

This appendix gives the finite algebraic check used in~\autoref{subsec:L254-L259-equivariant-eta}.  Let $x_j=\pi j/25$ and $C_j=\csc(29x_j)-\csc(9x_j)$.  The claimed second derivative is the finite trigonometric identity
\begin{equation}
2\sum_{j=1}^{12}
C_j
\left[
\csc(x_j)\cot^2(x_j)+\csc^3(x_j)
\right]
=
-6080.
\end{equation}
This can be checked exactly in the cyclotomic field generated by
\begin{equation}
\zeta=e^{\pi i/25}.
\end{equation}
Using
\begin{equation}
\sin(mx_j)=\frac{\zeta^{mj}-\zeta^{-mj}}{2i},
\qquad
\cos(x_j)=\frac{\zeta^{j}+\zeta^{-j}}{2},
\end{equation}
each summand becomes a rational function of $\zeta$.  After bringing the sum to a common denominator and reducing by the cyclotomic equation $\Phi_{50}(\zeta)=0$, the numerator of
\begin{equation}
2\sum_{j=1}^{12}
C_j
\left[
\csc(x_j)\cot^2(x_j)+\csc^3(x_j)
\right]
+6080
\end{equation}
reduces to zero.  Hence the identity is exact.

\section{Symbolic verification of the block moment identities}
\label{app:block-moment-verification}

For reproducibility, this appendix gives an exact symbolic check of the block moment identities used in Lemma~\ref{lem:block-moment-identities}.  The proof of the lemma reduces the moments to ordinary power sums; the symbolic check below gives an independent exact audit of the resulting closed forms.  The symbolic reductions were performed in Mathematica using exact integer and rational arithmetic; no floating-point approximation is used.

Let $N$ be an odd integer and let
\begin{equation}
P=N^2.
\end{equation}
For $0\leq n,t\leq N-1$, define
\begin{equation}
\varepsilon_n=
\begin{cases}
1,& n=0,\\
0,& n\neq 0,
\end{cases}
\qquad
c_n=[n-1]_N,
\end{equation}
and
\begin{equation}
\beta_1(n)=n+\frac{N+1}{2}+\varepsilon_n,
\qquad
\beta_2(n)=2n+1+\varepsilon_n.
\end{equation}
Then
\begin{equation}
R_i(t,n)=c_n+N[t-\beta_i(n)]_N.
\end{equation}
The moments are
\begin{equation}
M_k(N)
=
\sum_{n=0}^{N-1}
\sum_{t=0}^{N-1}
(Nt+n)\left(R_1(t,n)^k-R_2(t,n)^k\right).
\end{equation}

The exact symbolic audit gives the following closed forms.  First,
\begin{equation}
M_1(N)=\frac{N^3(N-1)}{2}.
\end{equation}
Second,
\begin{equation}
M_2(N)=\frac{N^3(N-1)(N^2-2)}{2}.
\end{equation}
Third,
\begin{equation}
M_3(N)=
-\frac{N^3(N-1)
\bigl(11N^5-460N^4+70N^3+1420N^2-81N-1440\bigr)}{960}.
\end{equation}
Substituting these three identities into the assembly identity used in Lemma~\ref{lem:block-moment-identities} gives
\begin{equation}
D_3(N)=
-\frac{N^2(N-1)^2(N+1)(11N^2+20N+81)}{5760}.
\end{equation}
Since $D_1(N)=0$, the normalized second derivative is
\begin{equation}
C_\ell
=
32D_3(\ell)
=
-\frac{\ell^2(\ell-1)^2(\ell+1)(11\ell^2+20\ell+81)}{180}.
\end{equation}
This agrees with the direct trace computation from Lemma~\ref{lem:D3-trace-to-convolution} and with the finite trigonometric check in the case $\ell=5$.

\bibliographystyle{utphys}
\bibliography{ref}

\end{document}